\definecolor{MediumGray}{gray}{0.6}
\numberwithin{equation}{section}
\theoremstyle{plain}
\newtheorem{theorem}[equation]{Theorem}
\newtheorem{prop}[equation]{Proposition}
\newtheorem{lemma}[equation]{Lemma}
\newtheorem{example}[equation]{Example}
\newtheorem{remark}[equation]{Remark}
\newcommand{\bs}{\mathbf{s}}
\newcommand{\cO}{\mathcal{O}}
\newcommand{\pl}{{\!+\!}}
\newcommand{\mn}{{\!-\!}}
\newcommand{\bfG}{\mathbf{G}}
\newcommand{\bfT}{\mathbf{T}}
\newcommand{\bfB}{\mathbf{B}}
\newcommand{\blambda}{\boldsymbol{\lambda}}
\newcommand{\bmu}{\boldsymbol{\mu}}
\newcommand{\bnu}{\boldsymbol{\nu}}
\newcommand{\brho}{\boldsymbol{\rho}}
\newcommand{\bfs}{\mathbf{s}}
\newcommand{\bsigma}{\boldsymbol{\sigma}}
\def\bbF{{\mathbb{F}}}
\def\bbZ{{\mathbb{Z}}}
\def\bbK{{\mathbb{K}}}
\def\bbk{{\mathbb{k}}}
\def\bbO{{\mathbb{O}}}
\def\bbQ{{\mathbb{Q}}}
\def\bbL{{\mathbb{\Lambda}}}
\newcommand{\nmod}{\textsf{-}\mathsf{mod}}
\newcommand{\umod}{\textsf{-}\mathsf{umod}}
\begin{document}
\title{Decomposition numbers for the principal $\Phi_{2n}$-block of $\mathrm{Sp}_{4n}(q)$ and $\mathrm{SO}_{4n+1}(q)$}
\author{Olivier Dudas}
\address[O.D.]{Universit\'{e} de Paris and Sorbonne Universit\'{e}, CNRS, IMJ-PRG, F-75006 Paris, France}
\email{olivier.dudas@imj-prg.fr}
\author{Emily Norton}
\address[E.N.]{Laboratoire de Math\'{e}mathiques Blaise Pascal, Universit\'{e} Clermont Auvergne, 3 Place Vasarely, 63178 Aubi\`ere, France}
\email{Emily.NORTON@uca.fr}

\begin{abstract}
We compute the decomposition numbers of the unipotent characters lying in the principal $\ell$-block of a finite group of Lie type $B_{2n}(q)$ or $C_{2n}(q)$ when $q$ is an odd prime power and $\ell$ is an odd prime number such that the order of $q$ mod $\ell$ is $2n$. Along the way, we extend to these finite groups the results of \cite{DVV19} on the branching graph for Harish-Chandra induction and restriction.
\end{abstract}

\maketitle


\section*{Introduction}

The representation theory of a finite group of Lie type $G:=\mathbf{G}(\mathbb{F}_q)$ over a field of positive characteristic $\ell$ coprime to $q$ has a close relationship to the representation theory of the Hecke algebra of its Weyl group. The decomposition matrix of the Hecke algebra always embeds as a submatrix of the decomposition matrix of $G$.  When $G=\mathrm{GL}_n(q)$ is the finite general linear group, the square unitriangular submatrix of the decomposition matrix of the unipotent blocks is the same as the decomposition matrix of the $q$-Schur algebra, a quasihereditary cover of the Hecke algebra of the symmetric group $S_n$. This is related to the fact that in characteristic $0$, $\mathrm{GL}_n(q)$ has exactly one cuspidal irreducible unipotent representation as $n$ ranges over $\mathbb{N}$, namely, the trivial representation of $\mathrm{GL}_1(q)$. When $G$ is not of type $A$, less is understood about the decomposition matrix of the unipotent blocks of $G$. There are more cuspidal unipotent representations in characteristic $0$ which give rise to multiple Hecke and quasihereditary algebras, all of which play a role in the unipotent blocks of $G$. However, in the general case, the knowledge of the decomposition numbers for these algebras is not enough to determine those of $G$. In \cite{Du13}  the first author initiated the use of Deligne--Lusztig characters to find the missing numbers. This proved successful in determining decomposition matrices for finite groups of Lie type in small rank, see \cite{HiNo14}, \cite{DuMa15}, \cite{DuMa16}, \cite{DuMa20}.

\smallskip

In this paper, we are concerned with groups of type $B_m$ and $C_m$ such as $G=\mathrm{SO}_{2m+1}(q)$ and $G=\mathrm{Sp}_{2m}(q)$ for odd $q$. If $n$ is the order of $q^2$ in $\mathbb{F}_\ell^\times$, the complexity of the decomposition matrix grows with $m/n$. For that reason we will consider the case where $m = 2n$, which is somehow the simplest case outside of the cyclic defect case. Two situations arise: \begin{itemize}
\item \emph{(linear prime case)} $q$ has order $n$ in $\mathbb{F}_\ell^\times$, in which case $n$ is necessarily odd, and the representation theory of $G$ behaves as a type $A$ phenomenon and can be deduced from the representation theory of $q$-Schur algebras of symmetric groups \cite{GrHi};
\item \emph{(unitary prime case)} $q$ has order $2n$ in $\mathbb{F}_\ell^\times$. Explicit decomposition matrices in that case were obtained by Okuyama--Waki for $m=2$ \cite{OkWa} and by Malle and the first author for $m=4$ \cite{DuMa16} and $m=6$ \cite{DuMa20}. 
\end{itemize}
Our main result provides a generalisation of the unitary prime case to any even $m$. To our knowledge it is the first general result for defect $2$ blocks of finite groups of Lie type outside of type $A$ phenomena. 

\smallskip

Let $\Phi_d(q)$ be the $d$-th cyclotomic polynomial evaluated at $q$. 

\smallskip
\noindent \textbf{Main Theorem.}
 (Theorems \ref{thm:B2 submatrix}, \ref{thm:B6 submatrix}, \ref{thm:principal series submatrix}, \ref{prop:PIM0}, \ref{thm:PIM1}, and \ref{thm:PIM2})
 \emph{Let $G$ be a finite group of Lie type over $\mathbb{F}_q$ of type $B_{2n}$ or $C_{2n}$ for $q$ an odd prime power. Let $\ell$ be an odd prime number such that the order of $q$ in $\bbF_{\ell}^\times$ is $2n$. Then all but two decomposition numbers of the unipotent characters in the principal $\ell$-block of $G$ are known. If $\Phi_{2n}(q)_\ell>4n$ then both these numbers are $2$ and the decomposition numbers are completely known.}
 
 \smallskip
 
Under our assumptions on $\ell$ and $q$, studying the unipotent characters in the principal $\ell$-block is a reasonable restriction. First, any other unipotent $\ell$-block has defect $0$ or $1$, and its decomposition matrix is known by \cite{FoSr90}. Second, the decomposition numbers of the non-unipotent characters in the principal $\ell$-block of $G$ may be recovered from those of the unipotent characters and from partial knowledge of the character table of $G$ by \cite{GeHi91,Ge93}.

\smallskip

The methods used to obtain the decomposition matrices are two-fold:
\begin{itemize}
  \item[(1)] First, we use Harish-Chandra induction and restriction to produce projective indecomposable modules (PIMs);
  \item[(2)] Second, we compute the missing PIMs (corresponding to cuspidal simple modules) using some partial information on the decomposition of Deligne--Lusztig characters on PIMs. 
\end{itemize}
For both of these steps, we use a truncated version of the Harish-Chandra induction and restriction coming from the categorical $\widehat{\mathfrak{sl}}_{2n}$-action on unipotent representations defined in \cite{DVV17}. The recent unitriangularity result in \cite{BDT19} allows us to compute the branching graph for this truncated induction, which provides the missing information for step (2) to be successful. 

\subsection*{Acknowledgments} We thank Gunter Malle and Raph\"{a}el Rouquier for helpful conversations, and Gunter Malle for perspicacious comments on a draft of this paper. O. Dudas gratefully acknowledges  financial support by the ANR, Project No ANR-16-CE40-0010-01E and by the grant SFB-TRR 195. E. Norton was supported by the grant SFB-TRR 195. E. Norton also thanks the workshop ``Categorification in quantum topology and beyond" at the Erwin Schr\"{o}dinger Institut, Vienna, January 2019.

\section{Representation theory for types $B$ and $C$}

\subsection{Combinatorics}

\subsubsection{Partitions and symbols}
Let $m$ be a non-negative integer. A partition $\lambda$ of $m$ is a non-increasing sequence of non-negative integers
$\lambda_1 \geq \lambda_2 \geq  \cdots \geq 0$ which add up to $m$. We call $m$ the size of
$\lambda$ and we denote it by  $|\lambda|$. 
 Let $s \in \mathbb{Z}$ and $\lambda$ be a partition of $m$. The charged $\beta$-set of $\lambda$
is the set
$$ \beta_s(\lambda) := \{ \lambda_1 +s , \lambda_2 +s-1,\ldots,\lambda_i+s-i+1,\ldots \}.$$
It is a subset of $\mathbb{Z}$ which contains all $z\in\mathbb{Z}$ such that $z\leq s-\#\{\hbox{non-zero parts of }\lambda\}$.

\smallskip
A bipartition $\blambda = \lambda^1 . \lambda^2$ of $m$
consists of  a pair $(\lambda^1,\lambda^2)$ of partitions such that $|\blambda| :=  |\lambda^1|+|\lambda^2|$
equals $m$. If $\lambda^2$ (resp. $\lambda^1$) is the empty partition we will write $\blambda = \lambda^1.$ (resp.
$\lambda = .\lambda^2$).  Given $\bfs = (s_1,s_2)$ in $\mathbb{Z}^2$, a \emph{charged symbol} with charge $\bfs$ is a pair $\Lambda = (X,Y)$ where $X=\beta_{s_1}(\lambda^1)$ and $Y=\beta_{s_2}(\lambda^2)$
for some bipartition $\blambda= \lambda^1.\lambda^2$. 
The \emph{defect} of the charged symbol $\Lambda$ is $D= s_1-s_2$. We set $\Lambda^\dag = (Y,X)$. It is a symbol with charge $\bfs^* := (s_2,s_1)$ and it is associated to the bipartition $\lambda^2.\lambda^1$. Note that the defect of a symbol should not be confused with the other use of the word ``defect" arising in representation theory of finite groups, namely the defect of a block.
\smallskip

Throughout this paper we shall only be working with charged symbols with odd defect and with a specific charge. Given $t \in \mathbb{Z}$, 
let us define
\begin{equation}\label{eq:sigmat}
\bsigma_t := \left\{ \begin{array}{ll} (t,-1-t) & \text{if $t$ is even,} \\
 (-1-t,t) & \text{if $t$ is odd}. \end{array}\right.
 \end{equation}
 Note that $\bsigma_{-1-t} = \bsigma_t$. A \emph{symbol} $\Lambda= (X,Y)$ is a charged symbol with charge $\bsigma_t$ for some $t\in \mathbb{Z}$. 
 If $X=\{x_1,x_2,x_3,\ldots\}$ and $Y=\{y_1,y_2,y_3,\ldots\}$, we will represent $\Lambda$ by
$$\Lambda = \begin{pmatrix} x_1 & x_2 & x_3 & \ldots\\
y_1 & y_2 & y_3 & \ldots \\
\end{pmatrix}.$$
This convention differs from the usual convention, for example the one in \textsf{Chevie} \cite{Mi15} since we allow both positive and negative defect, and since a symbol in \textsf{Chevie} is necessarily truncated on the right whereas our symbols are infinite to the right.
 Nevertheless this will be needed to have a consistent action of the $i$-induction operators on all symbols from the various Harish-Chandra series, and to allow $n$ to grow arbitrarily large, see \S\ref{sec:branching} and especially Remark~\ref{rmk:convention}. 
 \smallskip
 
\begin{remark} The $2$-row convention for representing the charged symbol $\Lambda$ of a bipartition $\blambda$ with charge $\bfs$ is the $180^\circ$-rotation of the $2$-abacus of $\blambda$ with charge $\bfs$ as in \cite{Gerber}.
\end{remark}
\smallskip

We will sometimes find it useful to drop the notation of symbols and work with Young diagrams. The Young diagram of the bipartition $\blambda$ is the set of triples
 $$ Y(\blambda):=\{(x,y,j)\in\mathbb{N}\times \mathbb{N}\times \{1,2\}\mid 1\leq x\leq \#\{\hbox{nonzero parts of }\lambda^j\},\;1\leq y\leq \lambda^j_x \}.$$
An element $b=(x,y,j)\in Y(\blambda)$ is called a box of the Young diagram. We will draw the Young diagram of a bipartition $\blambda$ by putting the diagrams of $\lambda^1$ and $\lambda^2$ side by side. For a box $(x,y,j)$ in $Y(\blambda)$, $x$ represents the row and $y$ the column, with the convention that rows are decreasing in length from top to bottom, as illustrated below for the example $\blambda=543.21$:
\begin{center}
\ytableausetup{mathmode,boxsize=1em}
$Y(543.21)=$ \quad \ydiagram{5,4,3}
\ \ $\cdot$ \
\ydiagram{2,1}
\end{center}

\subsubsection{Cores and co-cores}
Let $d$ be a positive integer and let $\Lambda = (X,Y)$ be a symbol. A $d$-hook in the top row 
(resp. in the bottom row)
of $\Lambda$ is a pair $(x,x+d)$ such that $x + d \in X$ and $x \notin X$ (resp. $x + d \in Y$ and $x \notin Y$). 
Removing a $d$-hook in the top row amounts to changing $\Lambda$ to $((X\smallsetminus\{x+d\}) \cup \{x\},Y)$,
and similarly for the bottom row. The  $d$-core is the symbol obtained by recursively removing all possible 
$d$-hooks. Removing or adding $d$-hooks does not change the defect of the symbol. 

\smallskip

A $d$-co-hook of $\Lambda$ is a pair $(x,x+d)$ such that $x + d \in X$ and $x \notin Y$  or  $x + d \in Y$ and $x \notin X$. 
The co-hook is removed from $\Lambda$ by removing $x+d$ from $X$ and adding $x$ to $Y$, or removing $x+d$ from $Y$ and adding $x$ to $X$, and then exchanging $X$ and $Y$. Recursively removing all $d$-co-hooks yields the $d$-co-core of $\Lambda$. 

\subsubsection{Families}\label{sssec:families}
Let $\Lambda = (X,Y)$ be a symbol. The \emph{composition} $\varpi_\Lambda$ attached to the symbol is the non-increasing sequence  $\varpi_\Lambda := (\varpi_1 \geq \varpi_2 \geq \varpi_3 \geq \cdots)$ obtained by considering the union of $X$ and $Y$ as a multiset. Since $X$ and $Y$ are $\beta$-sets of some partitions, any term in the composition $\varpi_\Lambda$
occurs at most twice (and all but finitely many terms appearing do). 

\smallskip

The dominance order on compositions defines a relation on symbols. We say that two symbols $\Lambda$ and $\Lambda'$ lie in the same \emph{family} and we write $\Lambda \equiv \Lambda'$ if $\varpi_\Lambda =\varpi_{\Lambda'}$. In other words, two symbols are in the same family if their multisets of entries are the same. We write $\Lambda \lhd \Lambda'$ and we say that $\Lambda'$ dominates $\Lambda$ if $\varpi_\Lambda  \lhd \varpi_{\Lambda'}$, by which we mean $\varpi_\Lambda  \neq \varpi_{\Lambda'}$
and $ \sum_{i=1}^j \varpi_i \leq \sum_{i=1}^j \varpi_i'$
for all $j \geq 1$.  This defines a strict partial order on the set of symbols. We will write $\Lambda \unlhd \Lambda'$
if $\Lambda = \Lambda'$ or $\Lambda \lhd \Lambda'$.

\begin{example} The following four symbols 
$$\begin{pmatrix}1 & 0 & & -2 & \cdots \\
& & -1 & -2 & \cdots \end{pmatrix},
\begin{pmatrix} 1 & &  -1 &-2 &  \cdots \\
& 0 & & -2 &\cdots
\end{pmatrix},
\begin{pmatrix} & 0 & -1 & -2 &  \cdots \\
1 & & & -2 & \cdots \end{pmatrix},
\begin{pmatrix} & & & -2 & \cdots \\
1 & 0 & -1 & -2 & \cdots \end{pmatrix}$$
form a family attached to the composition $(1,0,-1,-2,-2,\ldots) $. The first three symbols have charge $(0,-1)$ and 
correspond to the bipartitions $1^2.$, $1.1$ and $.2$, whereas the fourth symbol
has charge $(-2,1)$ and corresponds to the empty bipartition. Note that we have only included symbols that have charge $\bsigma_t$ for some $t \in \mathbb{Z}$.

\end{example}

\subsection{Unipotent representations of finite reductive groups of type $B$ and $C$}

\subsubsection{Representations of finite groups}
Let $G$ be any finite group and $\bbL$ a commutative ring with unit.
We denote by $\bbL G\nmod$ the abelian category of finitely generated left $\bbL G$-modules. The set of isomorphism classes of irreducible (or simple) objects 
will be denoted by $\mathrm{Irr}_\bbL G$. We will write $K_0(\bbL G\nmod)$
for the Grothendieck group of the category $\bbL G\nmod$. 
\smallskip

Let $\ell$ be a prime number. We shall work with representations over fields of characteristic
zero and $\ell$. For that purpose we fix an $\ell$-modular system $(\bbK,\bbO,\bbk)$ where $\bbK$
is an extension of $\bbQ_\ell$, the ring of integers $\bbO$ of $\bbK$ over $\bbZ_\ell$
is a complete d.v.r and its residue field $\bbk$ has characteristic~$\ell$.
Throughout this paper we will assume that this modular system is sufficiently large,
so that the algebras $\bbK G$ and $\bbk G$ split for any finite group $G$ considered, that is, so that all irreducible representations of $G$ over $\bbK$ (resp. $\bbk$) remain irreducible over any field extension of $\bbK$ (resp. $\bbk$).
We will usually identify  $K_0(\bbK G\nmod)$ with the space of virtual characters of $G$,
and its basis $\mathrm{Irr}_\bbK G$ by the set of (ordinary) irreducible characters.
We will denote by $\langle - ; - \rangle_G$ the usual inner product on  $K_0(\bbK G\nmod)$.

\subsubsection{Finite reductive groups and Deligne--Lusztig characters}\label{sssec:frg}
We fix a non-negative integer $m$. Let $\bfG$ be a connected reductive group, quasi-simple of type $B_m$
or $C_m$, defined over the finite field $\mathbb{F}_q$. Let $F : \bfG \longrightarrow \bfG$
be the corresponding Frobenius endomorphism. The finite group $G:=\bfG^F$ is
a \emph{finite reductive group}. If $\mathbf{H}$ is any closed subgroup of $\bfG$ we will denote
by $H := \mathbf{H}^F$ the corresponding finite group.
We fix an $F$-stable maximal torus $\bfT$
of $\bfG$ contained in an $F$-stable Borel subgroup $\bfB$ of $\bfG$. 
We denote by $W_m := N_\bfG(\bfT)/\bfT$ the corresponding Weyl group, which is of type $B_m$. The choice 
of $\bfB$ defines a subset of simple reflections $S = \{s_1,s_2,\ldots,s_m\}$ on
which $F$ acts trivially. They are labeled according to the following Coxeter diagram:
\begin{center}
\begin{tikzpicture}[scale=.9]
\node at (0,.45)[]{$s_1$};
\node at (1,.45)[]{$s_2$};
\node at (2,.45)[]{$s_3$};
\node at (4,.45)[]{$s_{m-1}$};
\node at (5,.45)[]{$s_m$};
\node at (0,0)[fill,circle,inner sep=2pt]{};
\node at (1,0)[fill,circle,inner sep=2pt]{};
\node at (2,0)[fill,circle,inner sep=2pt]{};
\node at (4,0)[fill,circle,inner sep=2pt]{};
\node at (5,0)[fill,circle,inner sep=2pt]{};
\draw[thick] plot [smooth,tension=.1]
coordinates{(.09,.06)(.91,.06)};
\draw[thick] plot [smooth,tension=.1]
coordinates{(.09,-.06)(.91,-.06)};
\draw[thick] plot [smooth,tension=.1]
coordinates{(1.1,0)(1.9,0)};
\draw[thick] plot [smooth,tension=.1]
coordinates{(2.1,0)(3.9,0)};
\draw[thick] plot [smooth,tension=.1]
coordinates{(4.1,0)(4.9,0)};
\end{tikzpicture}
\end{center}

\medskip

The $G$-conjugacy classes of $F$-stable maximal tori are parametrized 
by the conjugacy classes of $W_m$. Given $w \in W_m$ we will 
denote by $\mathbf{T}_w$ a maximal torus of type $w$.
Given $\theta$ a $\bbK$-linear character of $T_w$, 
Deligne--Lusztig defined in \cite{DeLu76} a virtual character 
$R_{T_w}^G(\theta)$ of $G$ over $\bbK$. We will write $R_w := R_{T_w}^G(1_{T_w})$
for the Deligne--Lusztig character associated to the trivial 
character of $T_w$. The irreducible constituents of the various $R_w$'s 
are the \emph{unipotent characters} of $G$.

\subsubsection{Harish-Chandra induction and restriction}
Given $I \subseteq S$, we write $W_I$ for the subgroup of $W$ generated
by $I$ and $\mathbf{P}_I = \bfB W_I \bfB$ for the corresponding standard parabolic subgroup of $\bfG$.
It has a Levi decomposition $\mathbf{P}_I = \mathbf{L}_I \ltimes \mathbf{U}_I$ where
 $\mathbf{L}_I$ is the unique Levi complement of $\mathbf{P}_I$ containing $\bfT$. 
 The Harish-Chandra induction and restriction functors are defined by
 $$ R_{L_I}^G := \bbL G/U_I \otimes_{\bbL L_I} -  \quad \text{and} \quad {^*R}_{L_I}^G := \mathrm{Hom}_G(\bbL G/U_I ,-)$$
where $\bbL$ is any of the rings $\bbK$, $\bbO$, $\bbk$. A $\bbL G$-module $V$ is said
to be \emph{cuspidal} if ${^*R}_{L_I}^G(V) = 0$ for all $I \subsetneq S$. 
When $\ell \nmid q$, the functors $({^*R}_{L_I}^G,R_{L_I}^G )$ form a biadjoint pair of
exact functors between $\bbL L_I\nmod$ and $\bbL G\nmod$.

\smallskip

When $\Lambda = \bbK$ these functors yield linear maps on characters of $G$ which we will still denote
by $R_{L_I}^G$ and ${^*R}_{L_I}^G$. When $I = \emptyset$ we have that $\mathbf{L}_I = \bfT$  is a split torus. In that case the Harish-Chandra induction and the Deligne--Lusztig map $R_T^G$ defined above coincide, which justifies our notation.

\subsubsection{Unipotent characters}\label{sssec:unipchars}
We recall here Lusztig's parametrization of unipotent characters of $G$ (see for example \cite[\S4]{Lu84}). Recall that $\bfG$ is a quasi-simple
group of type $B_m$ or $C_m$. The finite group $G$ admits a cuspidal unipotent character
if and only if $m = t^2+t$ for some $t\geq 0$. We label such a character by the charged symbol of the empty bipartition
with the charge $\bsigma_t$, see \eqref{eq:sigmat}, which is given by
$$\Lambda = \left\{ \begin{array}{ll}
\begin{pmatrix} t&t\mn1&\ldots & \ldots & \ldots\\ & & \mn1\mn t &\mn 2\mn t& \ldots\end{pmatrix} & \text{if $t$ is even}, \\[15pt]
\begin{pmatrix} & & \mn1\mn t &\mn 2\mn t& \ldots \\ t&t\mn1&\ldots & \ldots& \ldots \end{pmatrix} & \text{if $t$ is odd}. \end{array}
\right.
$$
More generally, if $t^2+t \leq m$, 
one can consider the standard Levi subgroup $L$ of $G$ of type $B_{t^2+t}$ or $C_{t^2+t}$.
Then the unipotent characters of $G$ lying in the Harish--Chandra series of the cuspidal unipotent character of $L$
correspond to bipartitions $\blambda=\lambda^1.\lambda^2$ of size $r= m-t^2-t$. In that case will we write
$\big[\lambda^1.\lambda^2\big]_{B_{t^2+t}}$ for the corresponding unipotent character, or $\big[\Lambda\big]$
if $\Lambda$ is the symbol of charge $\bsigma_t$ attached to $\blambda$. With our convention, the character
with smallest degree in the series is  $[r.]_{B_{t^2+t}}$ when $t$ is even but $[.r]_{B_{t^2+t}}$ when $t$ is odd. It agrees with the convention in \textsf{Chevie} \cite{Mi15} when $t$ is even but when $t$ is odd, the components of the bipartition must be swapped. 

\smallskip

The decomposition of the Deligne--Lusztig characters $R_w$ in terms of symbols was determined by Lusztig. Given an irreducible character $\chi$ of $W_m$ over $\bbK$ we can
form the {\em almost character }
$$R_\chi := \frac{1}{|W_m|} \sum_{w \in W_m} \chi(w) R_w.$$
Then using \cite[Thm.~4.23]{Lu84} one can compute the multiplicity $ \big\langle R_\chi;\big[\Lambda\big]\big\rangle_G$ 
of the unipotent character $[\Lambda]$ for any symbol $\Lambda$. Two examples of computations of Deligne--Lusztig characters are given in the appendix.

\subsubsection{Unipotent $\ell$-blocks}\label{sssec:unipblocks}
By an $\ell$-block $B$ of $G$ we mean a minimal $2$-sided ideal of 
the group algebra $\bbO G$. We have $B = \bbO G b$ for a unique primitive central idempotent $b$ of $\bbO G$.
We will write $\mathrm{Irr}_\bbK B$ for the ordinary irreducible characters lying in $B$, that is, those irreducible
characters $\chi \in \mathrm{Irr}_\bbK G$ such that $\chi(b) \neq 0$. An $\ell$-block $B$ is \emph{unipotent}
if it contains at least one unipotent character. In particular, the principal $\ell$-block, which is the block containing
the trivial character, is unipotent. We will denote by $\bbO G\umod$ the category of representations over the sum of the unipotent $\ell$-blocks of $G$.

\smallskip

Assume now that $\ell$ and $q$ are odd. The  $\ell$-blocks of $G$ were classified by Fong--Srinivasan in \cite{FoSr89}. There are two situations, depending on whether $\ell$ is ``linear'' or ``unitary''. Let $d$ be the multiplicative order of $q$ in $\bbF_\ell^\times$. 
\begin{itemize}
\item  If $d$ is odd, $\ell$ is said to be a \emph{linear prime} for $G$. In that case two unipotent characters $\big[\Lambda\big]$
and $\big[\Lambda'\big]$ lie in the same block if and only if the symbols $\Lambda$ and $\Lambda'$ have the same 
$d$-core. The number of $d$-hooks that must be removed to reach the $d$-core is called the  weight of the block.
\item If $d$ is even, $\ell$ is said to be a \emph{unitary prime} for $G$. Set $e := d/2$, the order of $q^2$
 in $\bbF_\ell^\times$. Then two unipotent characters $\big[\Lambda\big]$ and $\big[\Lambda'\big]$ lie in the same block if and only if the symbols $\Lambda$ and $\Lambda'$ have the same $e$-co-core. The number of $e$-co-hooks that must be removed to reach the $e$-co-core is called the  weight of the block.
\end{itemize}

We will often refer to an $\ell$-block $B$ as a $\Phi_d$-block, where $\Phi_d$ stands for the $d$-th cyclotomic polynomial.
This is justified by the fact that many of the properties of $B$ depend only on $d$ rather than on $\ell$, see \cite[Thm. 5.24]{BMM93}.
For example, if $\ell > d$ then any defect group of $B$ is isomorphic to $(\bbZ/\Phi_d(q)_\ell)^r$ where $r$ is the weight of the block.

\subsubsection{Decomposition matrix}\label{sssec:dec}
Recall that $(\bbK,\bbO,\bbk)$ is an $\ell$-modular system which is sufficiently large for $G$. Then every
$\bbK G$-module admits an integral form over $\bbO$, which can then be reduced modulo $\ell$ to 
a $\bbk G$-module. The image of that module in the Grothendieck group does not depend on 
the choice of the integral form and we obtain a linear map
$$ \mathsf{dec} : K_0(\bbK G\nmod) \longrightarrow K_0(\bbk G\nmod)$$
called the decomposition map. The decomposition matrix is the matrix
of this map in the bases $\mathrm{Irr}_\bbK G$ and $\mathrm{Irr}_\bbk G$.
It respects the block decomposition so that we can talk about the decomposition
matrix of an $\ell$-block. Dually, every projective
$\bbk G$-module $P$ lifts to a unique projective $\bbO G$-module $\widetilde P$, up to isomorphism. 
By the character of $P$ we mean the character of the $\bbK G$-module $\bbK\widetilde P$.
Brauer reciprocity states that the  decomposition matrix of $G$ is also the matrix whose 
columns are the characters of the PIMs (the projective indecomposable $\bbk G$-modules) in the basis
$\mathrm{Irr}_\bbK G$. 

\smallskip

In this paper we shall only be interested in the decomposition matrix of unipotent $\ell$-blocks. It is a reasonable restriction since any block is conjecturally Morita equivalent to some unipotent block \cite{Bro}. This was proved for a large class of non-unipotent blocks in \cite{BDR}.
We
say that a PIM is unipotent if it belongs to a unipotent $\ell$-block.  
When $\ell$ is odd, the unipotent characters form a basic set of the unipotent $\ell$-blocks \cite{GeHi91,Ge93}.
If in addition $q$ is odd, this basic set is unitriangular with respect to the order on families \cite{BDT19} (see \cite[\S4]{Spa} for the description of the order in terms of symbols). This means
that there is a labeling of the unipotent PIMs by symbols such that 
$$ \big\langle \bbK \widetilde P_\Lambda; [\Lambda']\big\rangle_G = \left\{ \begin{array}{ll} 1 & \text{if } \Lambda = \Lambda', \\
0 &  \text{if } \Lambda'  \ntrianglelefteq \Lambda.  \end{array}\right.  $$
To avoid cumbersome notation we will denote by $\Psi_{[\Lambda]}$
the unipotent part of the character of the PIM corresponding to the unipotent character $\big[\Lambda\big]$ by unitriangularity.

\smallskip

Similarly, we say that a simple $\bbk G$-module is unipotent if it belongs to a unipotent $\ell$-block. The unitriangularity of the decomposition matrix gives a natural labeling of the unipotent simple $\bbk G$-modules by unipotent characters. If $\big[\Lambda\big]$ is a unipotent character, we will denote by $S_{[\Lambda]}$ the corresponding
simple $\bbk G$-module.

\subsection{Branching rules}\label{sec:branching}
We recall and complete in this section the main result in \cite[\S6]{DVV17} on the branching rules for 
Harish-Chandra induction and restriction for unipotent representations of groups of type $B$ and $C$.
Throughout this section we will assume that $\ell$ and $q$ are odd, and that $d$, the multiplicative order of $q$ in $\mathbb{F}_\ell^\times$,
is even. In particular we have $d > 1$.

\smallskip

To avoid cumbersome notation we will work with symplectic groups but under our assumption that $\ell$ is odd, the main result of this section, Theorem \ref{thm:crystaliso}, remains valid for any group of type $B$ or~$C$.

\subsubsection{Level 2 Fock spaces}
Let $\blambda=\lambda^1.\lambda^2$ be a bipartition, and  $\bfs \in \mathbb{Z}^2$. The \emph{charged content} of a box $b = (x,y,j)$ in the Young diagram $Y(\blambda)$ is
$$\mathsf{co}^\bfs(b)=y-x+s_j.$$
Given another bipartition $\boldsymbol\mu$ and $c \in \mathbb{Z}$, we write $\boldsymbol\mu \smallsetminus \blambda = c$
if there exists a box $b$ of $\boldsymbol\mu$ with charged content $\mathsf{co}^\bfs(b) = c$ such that
the Young diagram of $\blambda$ is obtained from the Young diagram of $\boldsymbol\mu$ by removing 
the box $b$. 

\smallskip

Let $\{e_i,f_i\}_{i \in 0,\ldots,d-1}$ be the Chevalley generators of the affine Lie algebra $\widehat{\mathfrak{sl}}_d$. 
The Fock space $\mathsf{F}(\bfs)$ with charge $\bfs$ is the $\widehat{\mathfrak{sl}}_d$-module equipped with  
a $\mathbb{C}$-basis $|\blambda,\bfs\rangle$ labeled by bipartitions on which the Chevalley generators
act by
$$ e_i |\blambda,\bfs\rangle = \sum_{\begin{subarray}{c} j \equiv i \text{ mod } d \\ \blambda \smallsetminus \boldsymbol\mu=j \end{subarray}} |\boldsymbol\mu,\bfs\rangle \quad \text{and} \quad f_i |\blambda,\bfs\rangle = \sum_{\begin{subarray}{c}  j \equiv i \text{ mod } d \\ \boldsymbol\mu\smallsetminus\blambda =j \end{subarray}} |\boldsymbol\mu,\bfs\rangle.$$
Note that the action of $\widehat{\mathfrak{sl}}_d$ depends only on the class of $s_1$ and $s_2$ in $\bbZ/d$. 

\subsubsection{Order on bipartitions}
We consider here an order on bipartitions defined by Dunkl--Griffeth in \cite[\S4]{DuGr10}. Let $\blambda$
be a bipartition and let $Y(\blambda)$ be its Young diagram. If $b=(x,y,j)$ is a box in $Y(\blambda)$
we write $j(b) := j$. If $\bmu$ is another bipartition and $\bfs \in \mathbb{Z}^2$ we write $\blambda \preceq_\bfs
\bmu$ if for all $\alpha \in \mathbb{R}$ and $j = 1,2$
$$\begin{aligned}
& \, \#\{b \in Y(\blambda) \mid \mathsf{co}^\bs(b) - j(b) \frac{d}{2} >  \alpha \text{ or } \mathsf{co}^\bs(b) - j(b) \frac{d}{2} = \alpha\text{ and } j(b) \leq j\} \\
\leq& \, \#
\{b \in Y(\bmu) \mid \mathsf{co}^\bs(b) - j(b) \frac{d}{2} > \alpha\text{ or } \mathsf{co}^\bs(b) - j(b) \frac{d}{2} = \alpha \text{ and } j(b) \leq j\}. \end{aligned}$$
This is exactly the order $\leq_c$ defined in \cite{DuGr10} with $r=2$, $c_0 = d^{-1}$ and $d_0 = -d_1  = (s_1-s_2)/d+1/2$. 
Note that changing $\mathbf{s}$ to $\mathbf{s} + (s,s)$  for any $s \in \mathbb{Z}$ does not change the order. We will need the following lemma which relates the Dunkl--Griffeth order on bipartitions to the dominance order on charged symbols defined in \S\ref{sssec:families}.

\begin{lemma}\label{lem:order}
Let $\bsigma \in \mathbb{Z}^2$ and set $\bfs :=  \bsigma + \frac{1}{2} (0,d)$.
Let $\blambda$, $\bmu$ be two bipartitions, and let $\Lambda$, $\Lambda'$ be the corresponding
symbols of charge $\bsigma$. Then
$$ \boldsymbol\lambda \preceq_{\bfs}  \bmu \ \Longrightarrow \ \Lambda \lhd \Lambda' \text{ or } \Lambda \equiv \Lambda'.$$
\end{lemma}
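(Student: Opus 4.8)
The plan is to translate both sides of the implication into explicit statements about partial sums of the composition $\varpi_\Lambda$, and then compare them. First I would fix notation: write $\bsigma = (\sigma_1,\sigma_2)$ and $\bfs = (\sigma_1, \sigma_2 + d/2)$, so that for a box $b = (x,y,j)$ of $Y(\blambda)$ the quantity controlling the Dunkl--Griffeth order is
$$ \mathsf{co}^\bfs(b) - j(b)\tfrac{d}{2} = y - x + \sigma_j + (j-1)\tfrac{d}{2} - j\tfrac{d}{2} = y - x + \sigma_j - \tfrac{d}{2} = \mathsf{co}^\bsigma(b) - \tfrac{d}{2}. $$
So up to the global shift by $-d/2$, the statistic in the definition of $\preceq_\bfs$ is just the charged content $\mathsf{co}^\bsigma(b)$ of the box with respect to the charge $\bsigma$. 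The point of introducing $\bfs$ is precisely to make the two rows of the symbol ``interleave'' correctly; I would make this bookkeeping explicit.

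Next I would recall the standard dictionary between charged contents of boxes of $\blambda$ and the $\beta$-set $\beta_{\sigma_j}(\lambda^j)$: adding a box of charged content $c$ to row $x$ of $\lambda^j$ corresponds to replacing the entry $x$-th largest entry, which sits at position $\lambda^j_x + \sigma_j - x + 1 = c$, by $c+1$ in the $\beta$-set. Hence the multiset of charged contents of all boxes of $Y(\blambda)$ records exactly ``how far to the right'' each $\beta$-set entry has been pushed relative to the empty bipartition. Concretely, for each integer $N$, the number of boxes $b$ of $Y(\blambda)$ with $\mathsf{co}^\bsigma(b) \geq N$ (counted with the refinement by $j$ as in the definition) equals $\sum_{i} \#\{\text{entries of } X \cup Y \text{ that are } \geq N\}$ measured against the corresponding count for the empty symbol of the same charge. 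In other words, these ``staircase counts'' of $Y(\blambda)$ are, up to a universal additive constant depending only on $\bsigma$ and $N$, the tail partial sums $\sum_{i \colon \varpi_i \geq N}\varpi_i$-type quantities of $\varpi_\Lambda$ — more precisely, I would show that $\#\{b \in Y(\blambda) : \mathsf{co}^\bsigma(b) - d/2 > \alpha \text{ or } = \alpha, j(b) \le j\}$, summed appropriately over thresholds $\alpha$, recovers the partial sums $\sum_{k=1}^{j}\varpi_k(\Lambda)$. The refinement by $j(b) \leq j$ is exactly what is needed to handle the case where an entry $\alpha$ appears in both $X$ and $Y$: the top row corresponds to $j=1$, the bottom to $j=2$, matching the convention $\Lambda = (X,Y)$.

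Granting this dictionary, the implication becomes almost formal: $\blambda \preceq_\bfs \bmu$ says that for every threshold $\alpha$ and every $j$, a certain staircase count for $\blambda$ is $\leq$ the corresponding one for $\bmu$; the dictionary converts each such inequality into $\sum_{k=1}^{j}\varpi_k(\Lambda) \leq \sum_{k=1}^{j}\varpi_k(\Lambda')$ for all $j$. That is exactly the statement $\varpi_\Lambda \unlhd \varpi_{\Lambda'}$ in dominance order, i.e. $\Lambda \lhd \Lambda'$ or $\Lambda \equiv \Lambda'$. One subtlety to check carefully is that $\preceq_\bfs$ as defined ranges over all real $\alpha$, but only the values $\alpha \in \tfrac12 + \bbZ$ or $\alpha \in \bbZ$ (shifted by $d/2$, $\alpha$ ranging over half-integers modulo the parity of $d$) are relevant, and these are precisely the values of $\mathsf{co}^\bsigma(b) - d/2$ that can occur; I would note that the inequalities at other thresholds are implied. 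A second point is that the dominance order on $\varpi$ compares partial sums $\sum_{k=1}^{j}$ as $j$ ranges over all positive integers, whereas the Dunkl--Griffeth condition is indexed by $(\alpha, j)$ with $j \in \{1,2\}$ only; but since $\varpi_\Lambda$ is built by interleaving the two rows, sweeping $\alpha$ downward through all occurring values while toggling $j \in \{1,2\}$ enumerates exactly all the initial segments of $\varpi_\Lambda$, so no information is lost.

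The main obstacle I anticipate is getting the precise form of the order-reversing dictionary between ``staircase counts of the Young diagram, refined by $j(b) \leq j$'' and ``partial sums of the interleaved composition $\varpi_\Lambda$'' exactly right, including all the constant shifts coming from the charge $\bsigma_t$ and from the fact that the $\beta$-sets are infinite to the left. This is entirely a matter of careful combinatorial bookkeeping — choosing a large enough common left-truncation of all $\beta$-sets involved so that everything is finite, and tracking how adding a box moves a single bead — but it is the step where sign errors and off-by-one errors are easy to make. Once the dictionary is pinned down, the deduction of dominance is immediate, and the appearance of ``$\Lambda \equiv \Lambda'$'' in the conclusion accounts precisely for the possibility that all the partial-sum inequalities are equalities, i.e. $\varpi_\Lambda = \varpi_{\Lambda'}$.
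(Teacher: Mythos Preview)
Your overall strategy matches the paper's: reduce the Dunkl--Griffeth statistic to $\mathsf{co}^{\bsigma}(b)$, relate box counts above a threshold to the composition $\varpi_\Lambda$, then deduce dominance. Your first reduction is exactly right. But the final step, as you describe it, has a genuine gap.

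You claim that sweeping $\alpha$ downward while toggling $j\in\{1,2\}$ ``enumerates exactly all the initial segments of $\varpi_\Lambda$'', so that each Dunkl--Griffeth inequality becomes directly an inequality $\sum_{k=1}^{m}\varpi_k(\Lambda)\le\sum_{k=1}^{m}\varpi_k(\Lambda')$ for some $m=m(\alpha,j)$. This is not how the dictionary works. What the box count at threshold $\alpha$ actually gives (after passing to the extended Young diagram, as the paper does) is
\[
\#\{b\in\widetilde Y(\blambda)\,:\,\mathsf{co}^{\bsigma}(b)\ge\alpha\}\;=\;\sum_{\varpi_k\ge\alpha}(\varpi_k-\lceil\alpha\rceil),
\]
not an initial partial sum $\sum_{k=1}^{m}\varpi_k$. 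The number of terms in this sum depends on how many entries of $\varpi_\Lambda$ lie above $\alpha$, and that number is in general different for $\varpi_\Lambda$ and $\varpi_{\Lambda'}$ at the same threshold. So there is no universal $m=m(\alpha,j)$; your claimed bijection between thresholds and initial segments fails. (A small example: $\varpi=(2,2,0,\dots)$ versus $\varpi'=(3,1,0,\dots)$ at $\alpha=2$ picks out two terms on one side and one on the other.)

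The paper closes this gap with a short but non-obvious trick: for each $i$ it takes $\alpha=\varpi_i'$, the $i$-th entry of the composition of $\Lambda'$, and uses the chain
\[
\sum_{k=1}^{i}(\varpi_k-\varpi_i')\;\le\;\sum_{\varpi_k\ge\varpi_i'}(\varpi_k-\varpi_i')\;\le\;\sum_{\varpi_k'\ge\varpi_i'}(\varpi_k'-\varpi_i')\;=\;\sum_{k=1}^{i}(\varpi_k'-\varpi_i'),
\]
where the first inequality drops negative terms and possibly adds non-negative ones, and the middle inequality is the hypothesis. This is the missing idea in your plan. Note also that only the case $j=2$ of the Dunkl--Griffeth condition is used here; the refinement by $j(b)\le j$ that you emphasize for handling repeated entries plays no role in the paper's argument.
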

\begin{proof}First observe that given a box $b$ in the Young diagram of $\blambda$ we have
$$ \mathsf{co}^{\bfs}(b) - j(b) \frac{d}{2} =  \mathsf{co}^{\bsigma}(b) - \frac{d}{2}.$$
We deduce that $\blambda \preceq_{\bfs} \bmu$ if and only if for all $\alpha \in \mathbb{R}$ and $j = 1,2$ we have
\begin{equation}\label{eq:dgorder}
\begin{aligned}
& \, \#\{b \in Y(\blambda) \mid \mathsf{co}^{\bsigma}(b) >  \alpha \text{ or } \mathsf{co}^{\bsigma}(b)  = \alpha \text{ and } j(b) \leq j\} \\
\leq& \, \#
\{b \in Y(\bmu) \mid \mathsf{co}^{\bsigma}(b) > \alpha\text{ or }  \mathsf{co}^{\bsigma}(b)  = \alpha \text{ and } j(b) \leq j\}. \end{aligned}
\end{equation}
Let us consider the extended Young diagram $\widetilde{Y}(\blambda)$, defined as the set of boxes $b=(x,y,j)$
with $j= 1,2$, $x \geq 1$ and $y \leq \lambda_x^j$. Unlike the usual Young diagram we do not assume $y \geq 1$ for the boxes, which means that each row is infinite on the left. The set of boxes in $\widetilde{Y}(\blambda) \smallsetminus Y(\blambda)$ does not depend on the
bipartition $\blambda$, and the number of boxes with a given content is finite, therefore one can replace $Y(\blambda)$ and $Y(\bmu)$ by $\widetilde{Y}(\blambda)$ and $\widetilde{Y}(\bmu) $ in \eqref{eq:dgorder}. 
\smallskip

Working with extended Young diagrams makes the computations easier in \eqref{eq:dgorder}. Indeed, if
$\varpi_\Lambda := (\varpi_1 \geq \varpi_2 \geq \varpi_3 \geq \cdots)$ is the composition attached to $\Lambda$,
that is, the multiset given by the union of $\beta_{\sigma_1}(\lambda^1)$ and $\beta_{\sigma_2}(\lambda^2)$ (see \S\ref{sssec:families}), then we claim that 
$$ \#\{b \in \widetilde{Y}(\blambda) \mid \mathsf{co}^{\bsigma}(b) \geq  \alpha\} = \sum_{\varpi_k \geq \alpha} (\varpi_k-\lceil\alpha\rceil).$$
To show the claim we can assume without loss of generality that $\alpha \in \mathbb{Z}$ since the contents are integers. Each row in $\widetilde{Y}(\blambda)$ corresponds to an element $\varpi_k$, and
the highest content in that row equals $\varpi_k-1$. Consequently, this row contains a box of content $\beta$ (and only one) if and only if $\varpi_k-1 \geq \beta$. Therefore
$$\begin{aligned}
 \#\{b \in \widetilde{Y}(\blambda) \mid \mathsf{co}^{\bsigma}(b) \geq  \alpha\}  & \, = \sum_{\beta \geq \alpha} 
 \#\{b \in \widetilde{Y}(\blambda) \mid \mathsf{co}^{\bsigma}(b) = \beta\} \\
 & \, = \sum_{\beta \geq \alpha} 
 \#\{k \geq 1 \mid \varpi_k-1 \geq \beta\} \\
 & \, = \sum_{\begin{subarray}{c} k, \beta \\ \varpi_k-1 \geq \beta \geq \alpha \end{subarray}} 
 1 = \sum_k \sum_{\begin{subarray}{c} \beta \\ \varpi_k-1 \geq \beta \geq \alpha \end{subarray}} 1 \\
 &\, =\sum_{\begin{subarray}{c} k\\ \varpi_k-1  \geq \alpha \end{subarray}} (\varpi_k-\alpha). 
 \end{aligned}$$
 Let $\varpi_{\Lambda'} = (\varpi_1' \geq \varpi_2' \geq \varpi_3' \geq \cdots)$ be the composition attached to
 the symbol $\Lambda'$. We deduce from \eqref{eq:dgorder} with $j = 2$ that for all $\alpha \in \mathbb{R}$
\begin{equation}\label{eq:varpi}
\sum_{\varpi_k \geq \alpha} (\varpi_k-\lceil\alpha\rceil) \leq  \sum_{\varpi_k' \geq \alpha} (\varpi_k'-\lceil\alpha\rceil).
\end{equation}
Now, if $i \geq 1$ we have
$$\sum_{k =1}^i (\varpi_k-\varpi_i') \leq 
\sum_{\varpi_k \geq \varpi_i'} (\varpi_k-\varpi_i') \leq \sum_{\varpi_k' \geq \varpi_i'} (\varpi_k'-\varpi_i') = \sum_{k =1}^i (\varpi_k'-\varpi_i')$$ 
where we used  \eqref{eq:varpi} with $\alpha = \varpi_i'$ for the second inequality. This shows that
$\sum_{k =1}^i \varpi_k \leq \sum_{k =1}^i \varpi_k'$ for all $i \geq 1$ and completes the proof.
\end{proof}

\subsubsection{Categorification of unipotent representations}
We recall here the categorification result of \cite[\S6]{DVV17}.
For $m \geq 0$ we set $G_m := \mathrm{Sp}_{2m}(q)$. Using the (unique) standard Levi subgroup
$G_{m-1}\times\mathrm{GL}_1(q)$ of $G_m$ we can form the chain 
of subgroups $\{1\} = G_0 \subset G_1 \subset \cdots \subset G_{m-1} \subset G_m \subset \cdots$.
Since $d$, the multiplicative order of $q$ in $\mathbb{F}_\ell^\times$ is even,
the group $\mathrm{GL}_1(q)$ is an $\ell'$-group and the Harish-Chandra induction and restriction
induce exact functors between $\bbk G_{m-1}\umod$ and $\bbk G_{m}\umod$
for all $m$. We can form the abelian category
$$ \bbk G_\bullet \umod := \bigoplus_{m \geq 0} \bbk G_m\umod$$
of  the modules over all the unipotent $\ell$-blocks of the various groups $G_m$.
We will denote by $F$ and $E$ the endofunctors of this category induced
by Harish-Chandra induction and restriction respectively.

\smallskip

Since $\ell$ is odd, the unipotent characters form a basic set for the unipotent blocks \cite{GeHi91,Ge93}.
In particular $K_0(\bbk G_\bullet \umod)$ has a $\bbZ$-basis given by the image of
the unipotent characters under the decomposition map, see \S\ref{sssec:dec}. Recall
from \S\ref{sssec:unipchars} that the unipotent characters in a $B_{t^2+t}$-series are labeled by bipartitions,
or equivalently by symbols of charge $\bsigma_t$. Therefore they are in bijection
with the standard basis of any Fock space $\mathsf{F}(\bfs)$. For our purpose
we will consider the charges $\bfs_t$ defined by
\begin{equation}\label{eq:st}
\bfs_t :=  \bsigma_t + \frac{1}{2} (0,d) = \left\{ \begin{array}{ll} (t,-1-t + \frac{d}{2}) & \text{if $t$ is even,} \\[3pt]
 (-1-t,t + \frac{d}{2}) & \text{if $t$ is odd}, \end{array}\right.
 \end{equation}
 for all $t \geq 0$.  The previous discussion shows that there is an isomorphism of $\mathbb{C}$-vector spaces 
\begin{equation}\label{eq:isofock}
\begin{array}{rcl} \displaystyle\bigoplus_{t \geq 0} \mathsf{F}(\bfs_t)   & \mathop{\longrightarrow}\limits^\sim  & \mathbb{C} \otimes_\mathbb{Z} K_0(\bbk G_\bullet\umod) \\[6pt] |\blambda,\bfs_t\rangle & \longmapsto & \mathsf{dec}\big[\blambda\big]_{B_{t^2+t}} \end{array}
 \end{equation}
which sends an element of the standard basis to the image under the decomposition map of the corresponding unipotent character. One of the main results in \cite[\S6]{DVV17} is
a categorification of \eqref{eq:isofock}. It gives, for every $i = 0,\ldots,d-1$ a construction of a biadjoint pair of exact endofunctors $(F_i,E_i)$ of $\bbk G_\bullet \umod$ 
such that
$$F = \bigoplus_{i = 0}^{d-1} F_i \quad \text{and} \quad E = \bigoplus_{i=0}^{d-1} E_i$$
called {\em $i$-induction} and {\em $i$-restriction functors}, which induce an action of 
$\widehat{\mathfrak{sl}}_d$ on $K_0(\bbk G_\bullet\umod)$ making 
\eqref{eq:isofock} an isomorphism of $\widehat{\mathfrak{sl}}_d$-modules. 

\smallskip

\begin{remark}\label{rmk:convention}
In \cite[(6.3)]{DVV17} the authors used the charge $\bfs_t^*$ instead of $\bfs_t$ when $t$ is odd. This does not affect the categorification
result since the Fock spaces $\mathsf{F}(\bfs_t^*)$ and $\mathsf{F}(\bfs_t)$ are clearly isomorphic,
but it explains the discrepancy in our notation for unipotent characters with the one in $\mathsf{Chevie}$.
With our convention, under the isomorphism \eqref{eq:isofock}, the action of $f_i$ on a symbol
$$\Lambda = \begin{pmatrix} x_1 & x_2 & x_3 & \ldots\\
y_1 & y_2 & y_3 & \ldots \\
\end{pmatrix}$$ 
is given by increasing by $1$ any $x_j$ equal to $i$ modulo $d$ or any $y_j$ equal to $i+d/2$ modulo $d$, when possible.
\end{remark}

\subsubsection{Crystal graph and branching rules}\label{sssec:branchingrules}
Fix a charge $\bs=(s_1,s_2)\in\mathbb{Z}^2$, let $d\in\mathbb{Z}_{\geq 2}$ (here we do not require $d$ to be even), and consider the Fock space $\mathsf{F}(\bs)$. Let $|\blambda,\bs\rangle$ be a charged bipartition in $\mathsf{F}(\bs)$. A box $b$ of $\blambda$ is {\em removable} if $\bmu:=\blambda\setminus b$ is a bipartition. Then $b$ is called an {\em addable} box of the bipartition $\bmu$. For each $i\in\mathbb{Z}/d\mathbb{Z}$, define the {\em $i$-word} of $|\blambda,\bs\rangle$ as follows: list all the addable and removable boxes $b$ of $|\blambda,\bs\rangle$ such that $\mathsf{co}^\bs(b)\equiv i \mod d$ in increasing order from left to right according to  their value in $\bbZ$, 
with the convention that if $\mathsf{co}^\bs(b)=\mathsf{co}^\bs(b')$ and $b\in\lambda^1,b'\in\lambda^2$, then $b'$ is smaller than $b$. Now replace each addable box in the list by the symbol $+$ and each removable box in the list by the symbol $-$. The resulting string of pluses and minuses is called the $i$-word of $|\blambda,\bs\rangle$. The {\em reduced $i$-word} of $|\blambda,\bs\rangle$ is then found from the $i$-word  by recursively canceling all adjacent pairs $(-+)$. The reduced $i$-word is of the form $(+)^a(-)^b$ for some $a,b\in\mathbb{Z}_{\geq 0}$. The {\em Kashiwara operator $\tilde{f}_i$} adds the addable $i$-box corresponding to the rightmost $+$ in the reduced $i$-word of $|\blambda,\bs\rangle$, or if there is no $+$ in the reduced $i$-word then it acts by $0$. Likewise, the Kashiwara operator $\tilde{e}_i$ removes the removable $i$-box corresponding to the leftmost $-$ in the reduced $i$-word of $|\blambda,\bs\rangle$, or if there is no $-$ in the reduced $i$-word then it acts by $0$. The directed graph with vertices all bipartitions and ($\mathbb{Z}/d\mathbb{Z}$-colored) edges $\blambda\stackrel{i}{\rightarrow}\bmu$ if and only if $\bmu=\tilde{f}_i(\blambda)$, $i\in\mathbb{Z}/d\mathbb{Z}$, is called the {\em $\widehat{\mathfrak{sl}}_d$-crystal} on $\mathsf{F}(\bs)$ \cite[Section 3]{JMMO}, \cite[Theorem 2.8]{FLOTW}.
\smallskip

\begin{example}
Let $\blambda=421^4.4^232^21^3$, $\bs=(1,0)$, and $d=4$.
\[|\blambda,\bs\rangle=
\begin{ytableau}
1&2&3&4\\
0&1\\
\hbox{-}1\\
\hbox{-}2\\
\hbox{-}3\\
\hbox{-}4\\
\end{ytableau}
\ \ \cdot \ \,
\begin{ytableau}
0&1&2&3\\
\hbox{-}1&0&1&2\\
\hbox{-}2&\hbox{-}1&0\\
\hbox{-}3&\hbox{-}2\\
\hbox{-}4&\hbox{-}3\\
\hbox{-}5\\
\hbox{-}6\\
\hbox{-}7\\
\end{ytableau}
\]
Let us find the $0$-word of $|\blambda,\bs\rangle$. The addable $0$-boxes $(x,y,j)$, where $x$ is the row, $y$ is the column, and $j$ is the component, are: $(9,1,2),(6,2,2),(3,2,1), (1,5,2)$. The removable $0$-boxes are: $(6,1,1),(3,3,2),(1,4,1)$. Ordering them we obtain the $0$-word:
\[
\begin{array}{ccccccc}
(9,1,2)&(6,2,2)&(6,1,1)&(3,3,2)&(3,2,1)&(1,5,2)&(1,4,1)\\
+&+&-&-&+&+&-
\end{array}
\]
Iterating cancellations of all adjacent $(-+)$, we obtain the reduced $0$-word:
\[
\begin{array}{ccc}
(9,1,2)&(6,2,2)&(1,4,1)\\
+&+&-
\end{array}
\]
Thus $\tilde{f}_0$ adds the box $(6,2,2)$ to $|\blambda,\bs\rangle$, and $\tilde{e}_0$ removes the box $(1,4,1)$ from $|\blambda,\bs\rangle$. That is, $\tilde{f}_0(\blambda)=421^4.4^232^31^2$ and $\tilde{e}_0(\blambda)=321^4.4^232^21^3$.
\end{example}
\smallskip

We will often work with symbols instead of Young diagrams, so it is useful to describe how the operators $\tilde{f}_i$ and $\tilde{e}_i$ act on symbols. Let $[\blambda]_{B_{t^2+t}}$ be a bipartition in the $B_{t^2+t}$ series, and let  $\Lambda=(X,Y)$ be its symbol with charge $\bsigma_t$. Recall that this does not depend on $d$. Instead of changing the symbol to depend on $d$, we define the action so that it depends on $d$. 

\smallskip

Let $d\in2\mathbb{N}$ with $d \geq 2$.
An addable $i$-box of $\lambda^1$ is $x\in X$ such that $x+1\notin X$ and $x\equiv i \mod d$. A removable $i$-box of $\lambda^1$ is $x\notin X$ such that $x+1\in X$ and $x\equiv i \mod d$. The next condition records the dependence on $d$. An addable $i$-box of $\lambda^2$ is $y\in Y$ such that $y+1\notin Y$ and $y+\frac{d}{2}\equiv i \mod d$. A removable $i$-box of $\lambda^2 $ is $y\notin Y$ such that $y+1\in Y$ and $y+\frac{d}{2}\equiv i \mod d$. We then order all addable and removable $i$-boxes of $\blambda$
by the following rule:
\begin{itemize}
\item $x_1$ is less than $x_2$ if and only if $x_1<x_2$ in $\mathbb{Z}$,
\item $y_1$ is less than $y_2$ if and only if $y_1<y_2$ in $\mathbb{Z}$,
\item $x$ is less than $y$  if and only if $y+\frac{d}{2}>x$ in $\mathbb{Z}$,
\item $y$ is less than $x$ if and only if $x\geq y+\frac{d}{2}$ in $\mathbb{Z}$,
\end{itemize}
for all addable and removable boxes $x,x_1,x_2$ of $\lambda^1$ and $y,y_1,y_2$ of $\lambda^2$.
We then form the $i$-word of $\blambda$ by listing its addable and removable $i$-boxes in increasing order as just defined. The cancellation rule is the same as before, and we obtain the good addable $i$-box and the good removable $i$-box of $\blambda $ (if they exist) as described before.
 
 \begin{example}
 Take $\blambda=31^2.2$ and consider it as a bipartition in the principal series, so that $t=0$. We have $\bsigma_0=(0,-1)$ and thus the symbol associated to $[\blambda]$ is
 $$\Lambda=\begin{pmatrix}X\\Y\end{pmatrix}=
 \begin{pmatrix}
 3&&&0&-1&&-3&-4&\ldots\\
 &&1&&&-2&-3&-4&\ldots
 \end{pmatrix}
 $$
 Now suppose that $d=6$. Let us calculate the action of $\tilde{f}_3$ and $\tilde{e}_3$ on $\Lambda$. The addable $3$-boxes of $\blambda$ are given by $3,-3\in X$, and the removable $3$-boxes of $\blambda$ are given by $0\notin Y$. Ordering them according to our rule, the $3$-word of $\blambda$ is:
\[
\begin{array}{ccc}
+ & - & +\\
-3\in X & 0\notin Y & 3\in X
\end{array}
\]
We then cancel the occurrence of $(-+)$, yielding $+$ as the reduced $3$-word and $-3\in X$ as the good addable $3$-box. There is no good removable $3$-box, so $\tilde{e}_3\Lambda=0$. Adding the good addable $3$-box  to $\blambda$ yields:
\[
\tilde{f}_3\Lambda=
 \begin{pmatrix}
 3&&&0&-1&-2&&-4&\ldots\\
 &&1&&&-2&-3&-4&\ldots
 \end{pmatrix}.
\]
 \end{example}
\smallskip

Now we recall how the combinatorics of crystals relates to the representation theory of unipotent blocks. We can define the \emph{colored branching graph} whose vertices are labeled by the
unipotent simple $\bbk G_m$-modules for all $m \geq 0$ and arrows are given by $T \xrightarrow{i} T'$ if $T'$ appears in the head of $F_i(T)$,
or equivalently if $T$ appears in the socle of $E_i(T')$. 

\smallskip

\begin{theorem} \label{thm:crystaliso}
The map 
$$|\blambda,\bfs_t\rangle \mapsto S_{[\blambda]_{B_{t^2+t}}}$$
induces an isomorphism between the union of the crystal graphs of $\mathsf{F}(\bfs_t)$ for $t\geq 0$ and
the colored branching graph of Harish-Chandra induction and restriction.
\end{theorem}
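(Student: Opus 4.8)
The plan is to deduce Theorem~\ref{thm:crystaliso} from the categorification result of \cite[\S6]{DVV17} recalled above, combined with the unitriangularity of the decomposition matrix from \cite{BDT19}. The $i$-induction and $i$-restriction functors $(F_i,E_i)$ give $K_0(\bbk G_\bullet\umod)$ the structure of an integrable $\widehat{\mathfrak{sl}}_d$-module, and \eqref{eq:isofock} is an isomorphism of $\widehat{\mathfrak{sl}}_d$-modules from $\bigoplus_{t\geq 0}\mathsf{F}(\bfs_t)$. The crystal graph of the Fock space side is the $\widehat{\mathfrak{sl}}_d$-crystal on $\bigsqcup_t\mathsf{F}(\bfs_t)$ described in \S\ref{sssec:branchingrules}. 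On the representation-theoretic side, the colored branching graph is, by definition, the crystal of the categorical action in the sense of Chuang--Rouquier / Losev: its vertices are the simple objects $S_{[\blambda]_{B_{t^2+t}}}$ and its edges record $\mathrm{hd}\, F_i$ and $\mathrm{soc}\, E_i$. So the content of the theorem is that, under the labeling of unipotent simples by symbols coming from unitriangularity, the categorical crystal coincides with the combinatorial Fock-space crystal.

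\textbf{Step 1: categorical crystal structure.} First I would invoke that a categorical $\widehat{\mathfrak{sl}}_d$-action on an abelian category with finitely many simples in each block endows the set of simple objects with the structure of an $\widehat{\mathfrak{sl}}_d$-crystal, with $\tilde f_i S$ defined as the head of $F_i S$ and $\tilde e_i S$ the socle of $E_i S$ (Chuang--Rouquier, Losev, Shan). This is exactly the data defining the colored branching graph, so the branching graph \emph{is} an abstract $\widehat{\mathfrak{sl}}_d$-crystal, with the same crystal datum (weights given by the classes in $K_0$).

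\textbf{Step 2: identification of the crystal.} The remaining task is to show this abstract crystal is isomorphic to the Fock-space crystal on $\bigsqcup_t\mathsf F(\bfs_t)$ \emph{via the prescribed vertex labeling}. Since both are direct sums of copies of the Fock-space crystal with the same decomposition into connected components (each component is a highest-weight crystal determined by its highest weight, and $K_0$-weights are preserved by \eqref{eq:isofock}), it suffices to pin down the map on highest-weight vertices and then propagate along $\tilde f_i$-edges. For a highest-weight vertex, i.e.\ a cuspidal simple $\bbk G_m$-module $S$, Harish-Chandra restriction ${}^*R$ kills it, so $E_i S = 0$ for all $i$; the cuspidal unipotent simples occur only when $m = t^2+t$ and, by unitriangularity together with the basic set property, the unique cuspidal unipotent simple in $\bbk G_{t^2+t}$ is $S_{[\varnothing]_{B_{t^2+t}}}$, matching the empty-bipartition highest-weight vector $|\varnothing,\bfs_t\rangle$. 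From there, one checks that applying $\tilde f_i$ on the categorical side corresponds to applying $\tilde f_i$ on the Fock space: for this I would use that the \emph{decomposition-matrix} triangularity forces $\mathsf{dec}[\tilde f_i\blambda] $ to have $[\tilde f_i\blambda]$ as its $\lhd$-maximal (or $\equiv$-maximal) constituent, via Lemma~\ref{lem:order} which translates the Dunkl--Griffeth order $\preceq_{\bfs_t}$ governing the Fock-space crystal into the dominance order $\lhd$ on symbols governing unitriangularity. Comparing this with the fact that $\mathrm{hd}\,F_i S_{[\blambda]}$ must be the simple labeled by the $\lhd$-maximal unipotent character appearing in $F_i$ applied to any lift, forces $\mathrm{hd}\,F_i S_{[\blambda]_{B_{t^2+t}}} = S_{[\tilde f_i\blambda]_{B_{t^2+t}}}$.

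\textbf{The main obstacle} is Step 2, specifically reconciling the combinatorial crystal operator $\tilde f_i$ on Fock space with the ``take the $\lhd$-maximal term'' recipe coming from the categorical action and unitriangularity. The subtle point is that the categorical action a priori only controls the class $[F_i\, \bbk\widetilde P]$ of projectives in $K_0$ (an honest $\mathfrak{sl}_d$-action), whereas the crystal lives on simples; bridging the two requires the compatibility of the labeling of PIMs and of simples (both by symbols, via the same unitriangularity), and the observation that the highest-weight-crystal structure is \emph{rigid} — once highest weight vertices and $K_0$-weights match, and once one knows the $\tilde f_i$-arrows are compatible with the partial order $\lhd$ in the way dictated by Lemma~\ref{lem:order}, uniqueness of the crystal isomorphism does the rest. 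I would also need to verify the edge-ordering conventions for addable/removable boxes on symbols stated in \S\ref{sssec:branchingrules} genuinely reproduce the Fock-space crystal for the charges $\bfs_t$; this is the content-bookkeeping already set up via \eqref{eq:st} and Remark~\ref{rmk:convention}, so it is routine but must be stated. Finally, the passage from $\mathrm{Sp}_{2m}$ to arbitrary type $B$/$C$ groups follows from the remark before \S\ref{sssec:branchingrules} since $\ell$ is odd.
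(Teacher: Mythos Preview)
Your Step 2 contains a genuine error that breaks the argument. You claim that ``the cuspidal unipotent simples occur only when $m = t^2+t$ and \ldots\ the unique cuspidal unipotent simple in $\bbk G_{t^2+t}$ is $S_{[\varnothing]_{B_{t^2+t}}}$.'' This is the characteristic-zero statement; over $\bbk$ it is false. Indeed, the whole point of the later sections of the paper is that there are \emph{additional} cuspidal simple modules in positive characteristic, such as $S_{[1^{2n}.]}$, $S_{[.1^{2n}]}$, $S_{[2^{n-1}.]_{B_2}}$, $S_{[.2^{n-3}]_{B_6}}$. Correspondingly, each Fock space $\mathsf{F}(\bfs_t)$ is not irreducible as an $\widehat{\mathfrak{sl}}_d$-module and has many highest-weight (source) vertices in its crystal, not just $|\varnothing,\bfs_t\rangle$. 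So your plan of ``pin down the isomorphism on highest-weight vertices and propagate'' would require knowing, for every modular cuspidal $S$, which bipartition labels it --- but that labeling is precisely what Theorem~\ref{thm:crystaliso} is meant to establish, so the argument is circular. The subsequent claim that $\mathrm{hd}\,F_i S_{[\blambda]}$ is labeled by the $\lhd$-maximal constituent of $F_i$ applied to a lift is likewise unjustified as stated: unitriangularity of the decomposition matrix controls composition factors, not heads, and bridging that gap is exactly the nontrivial content.

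The paper avoids these problems by not working with highest-weight vertices at all. Instead it compares two \emph{perfect bases} of the same integrable module $\bigoplus_t \mathsf{F}(\bfs_t)$: Uglov's upper global (dual canonical) basis $b^\vee(|\blambda,\bfs_t\rangle)$, and the basis of classes of simple modules $S_{[\Lambda]}$. The former is unitriangular in the standard basis for $\preceq_{\bfs_t}$; the latter is unitriangular in the standard basis for the family order $\lhd$ by \cite{BDT19}. Lemma~\ref{lem:order} shows $\preceq_{\bfs_t}$ refines $\lhd$ (up to $\equiv$), so the change of basis between the two perfect bases is unitriangular for a suitable common order. One then invokes the general principle \cite[Prop.~1.14]{DVV19} that a unitriangular change between perfect bases induces a crystal isomorphism. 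This argument never needs to enumerate the highest-weight vectors on either side.
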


\begin{proof}The proof follows the arguments given in the proof of \cite[Thm.~4.37]{DVV19}. Let $t \geq 0$.
There is a perfect basis of the Fock space $\mathsf{F}(\bfs_t)$ coming from an upper global basis of a quantum
deformation of $\mathsf{F}(\bfs_t)$ defined by Uglov in \cite{Uglov}. We denote this basis by $b^\vee\left(|\blambda,\bfs_t\rangle\right)$.
It is unitriangular in the standard basis, with respect to the order $\preceq_{\mathbf{s}_t}$ defined in \S\ref{sec:branching}. In other words we have 
\begin{equation}\label{eq:buni}
 b^\vee\left(|\blambda,\bfs_t\rangle\right) \in |\blambda,\bfs_t\rangle + \sum_{\blambda \prec_{{\bfs}_t} \boldsymbol\mu} \mathbb{C} |\boldsymbol\mu,\bfs_t\rangle.
 \end{equation}
On the other hand, since the decomposition matrix is unitriangular, we have, for every unipotent character
$\big[\Lambda\big]$ attached to a symbol $\Lambda$
\begin{equation}\label{eq:suni}
\mathsf{dec}\big(\big[\Lambda\big]\big) \in S_{[\Lambda]} + \sum_{\Lambda \lhd \Lambda'} \mathbb{Z} S_{[\Lambda']} \end{equation}
in $K_0(\bbk G_\bullet\umod)$. Now let $\psi$ be the isomorphism in \eqref{eq:isofock} sending $|\blambda,\bs_t\rangle$ to $\mathsf{dec}\big(\big[\Lambda\big]\big)$ where $\Lambda$ is the symbol of charge $\bsigma_t$ attached to $\blambda$. Lemma~\ref{lem:order} tells us that for all $\bmu$ such that $\blambda\prec_{{\bs}_t}\bmu$, it holds that $\Lambda\lhd \Lambda'$ or $\Lambda \equiv \Lambda'$, where $\Lambda'$ is the symbol of charge $\bsigma_t$ attached to $\bmu$.
Since 
$
\psi\left( |\bmu,\bs_t\rangle \right)\in S_{[\Lambda']}+\sum\limits_{\Lambda'\lhd\Lambda''}\mathbb{C}S_{[\Lambda'']}
$ by \eqref{eq:suni} 
we deduce that
$$
\psi\left( |\bmu,\bs_t\rangle \right)\in S_{[\Lambda']}+\sum\limits_{\Lambda\lhd\Lambda''}\mathbb{C}S_{[\Lambda'']}.
$$
This together with \eqref{eq:buni} gives
\begin{equation}\label{eq:ssuni} \psi\big(b^\vee\left(|\blambda,\bfs_t\rangle\right) \big)  \in S_{[\Lambda]} + \sum_{\Lambda \prec \Lambda'} \mathbb{C} S_{[\Lambda']}
\end{equation}
where $\Lambda\prec\Lambda'$ is the transitive closure of the relation
$$\Lambda\lhd\Lambda', \hbox{ or }\Lambda'\leftrightarrow\bmu\hbox{ with }\blambda\prec_{{\bs}_t}\bmu.
$$
If we define $\varphi$ to be the bijection that sends $S_{[\Lambda]}$ to $\psi\big(b^\vee\left(|\blambda,\bfs_t\rangle\right) \big)$, two perfect bases of $\bigoplus_{t \geq 0} \mathsf{F}(\bfs_t)$, then \eqref{eq:ssuni} shows that $\varphi$ satifies the assumptions of \cite[Prop. 1.14]{DVV19} with respect to the order $\prec$ and therefore induces a crystal isomorphism.
\end{proof}


\section{The decomposition matrix of the principal $\Phi_{2n}$-block of $\mathrm{Sp}_{4n}(q)$ and $\mathrm{SO}_{4n+1}(q)$}

We fix an integer $n \geq 0$. Throughout this section $G = G_{2n}$ will denote one of the finite groups $\mathrm{SO}_{4n+1}(q)$ or $\mathrm{Sp}_{4n}(q)$ for $q$ a power of an odd prime. We are interested in the decomposition matrix of the principal $\ell$-block of $G$  when the order of $q$ in $\mathbb{F}_\ell^\times$ equals $2n$.
We will often refer to this block as the principal $\Phi_{2n}$-block of $G$ (see \S\ref{sssec:unipblocks}). 

\subsection{Unipotent characters in the principal $\Phi_{2n}$-block}\label{ssec:2nblock}
Let $B$ be the principal $\Phi_{2n}$-block of $G$ and $b$ be the corresponding block idempotent. 
By \S\ref{sssec:unipblocks} the block has weight $2$ and the unipotent characters lying in $B$ are labeled by symbols of rank $2n$ and $n$-co-core equal to the symbol
\begin{equation}
\label{eq:trivialsymb}
\begin{pmatrix} 0&-1&-2&\ldots\\ &-1&-2&\ldots\end{pmatrix}.
\end{equation}
Consequently one obtains such symbols by adding two $n$-co-hooks to the symbol \eqref{eq:trivialsymb}.
Depending on which rows the co-hooks are inserted in, one gets symbols of defect $1$, $-3$ or $5$ which label unipotent characters in the principal series, the $B_2$-series, or the $B_6$-series of $G$ respectively, see \S\ref{sssec:unipchars}. 

\begin{enumerate}
\item \textit{Principal series}. There are three families of symbols of defect $1$ obtained by adding two $n$-co-hooks to the
symbol \eqref{eq:trivialsymb}.
\begin{itemize}\item For $0\leq j\leq n$ and $0<i<n$, the symbol $$\begin{pmatrix}n\mn i&&0&-1 & \ldots&\widehat{\mn j}&\ldots \\ n\mn j&&&-1& \ldots&\widehat{\mn i}&\ldots \end{pmatrix}$$ corresponding to the principal series character $\big[(n\mn i)1^j.(n\mn j\pl1)1^{i\mn1}\big]$. There are $n^2-1$ such symbols.
\item For $0\leq j<2n$, the symbol $$\begin{pmatrix}2n\mn j&&0&-1&\ldots&\widehat{\mn j}&\ldots \\ &&&-1&\ldots &\ldots&\ldots\end{pmatrix}$$ corresponding to the principal series character $\big[(2n\mn j)1^j.\big]$. There are $2n$ such symbols.
\item For $0<i\leq 2n$, the symbol $$\begin{pmatrix}&&0&-1&\ldots&\ldots&\ldots\\ 2n\mn i&&&-1&\ldots&\widehat{\mn i}&\ldots\end{pmatrix}$$ corresponding to the principal series character $\big[.(2n\mn i\pl1)1^{i-1}\big]$.
Again, there are $2n$ such symbols.\\
\end{itemize}

\item \textit{$B_2$-series}. The symbols of defect $-3$ are given by
$$\begin{pmatrix}&&0&-1&\ldots&\widehat{\mn i}&\ldots&\widehat{\mn j}&\ldots\\ n\mn i&n\mn j&&-1&\ldots&\ldots&\ldots&\ldots&\ldots
\end{pmatrix}$$ 
for $0\leq i<j\leq n$. There are $n(n+1)/2$ such symbols. They correspond to the unipotent characters $\big[2^i1^{j\mn i\mn1}.(n\mn i\mn1)(n\mn j)\big]_{B_2}$, all of which lie in the $B_2$-series.

In terms of bipartitions and their Young diagrams, the unipotent characters $[\blambda]_{B_2}$ in the principal $\Phi_{2n}$-block are labeled by $\blambda$ such that $\lambda^1$ fits inside the rectangle $2^{n-1}$, and $\lambda^2$ is the reflection across a diagonal line of slope $1$ of the skew shape $2^{n-1}\setminus \lambda^1$ (in particular, $\lambda^2$ fits in the rectangle $(n-1)^2$). This is best illustrated with a picture (here, for $n=7$, we have shaded $\blambda=2^31^2.31$):
\[
\ydiagram[*(MediumGray)]{2,2,2,1,1}*[*(white)]{2,2,2,2,2,2}\ \ \cdot \ \,
\ydiagram[*(MediumGray)]{3,1}*[*(white)]{6,6}
\]

\item \textit{$B_6$-series}.   The symbols of defect $5$ are given by 
$$\begin{pmatrix}n\mn i&n\mn j&&0&-1&\ldots&\ldots&\ldots&\ldots&\ldots \\
&&&&-1&\ldots&\widehat{\mn i}&\ldots&\widehat{\mn j}&\ldots
\end{pmatrix}$$ 
for $0<i<j<n$. There are $(n-1)(n-2)/2$ such symbols. They correspond to the unipotent characters $\big[(n\mn i\mn2)(n\mn j\mn1).2^{i\mn1}1^{j\mn i\mn1}\big]_{B_6}$, all lying in the $B_6$-series.

In terms of bipartitions and their Young diagrams, the unipotent characters $[\blambda]_{B_6}$ in the principal $\Phi_{2n}$-block are labeled by $\blambda$ such that $\lambda^2$ fits inside the rectangle $2^{n-3}$, and $\lambda^1$ is the reflection across a diagonal line of slope $1$ of the skew shape $2^{n-3}\setminus \lambda^2$ (in particular, $\lambda^1$ fits in the rectangle $(n-3)^2$). Again, this is best illustrated with a picture (here, for $n=7$, we have shaded $\blambda=41.1^3$):
\[
\ydiagram[*(MediumGray)]{4,1}*[*(white)]{4,4}\ \ \cdot \ \,
\ydiagram[*(MediumGray)]{1,1,1}*[*(white)]{2,2,2,2}
\]

\end{enumerate}
Consequently there are $2n^2+3n$ unipotent characters in the principal $\Phi_{2n}$-block. By \cite{BMM93}, the number of unipotent characters in the principal $\Phi_{2n}$-block may also be counted by the number of complex irreducible representations of the complex reflection group $G(2n,1,2)$.

\subsection{Induced columns} In this section, we describe the columns of the decomposition matrix of the principal $\Phi_{2n}$-block obtained by Harish-Chandra induction from proper Levi subgroups.  These account for all but four columns of the decomposition matrix.   
In all cases, we find that each non-cuspidal column in the principal $\Phi_{2n}$-block is the character of an induced PIM, cut to the principal block. 

\smallskip

Since a group of type $A_r$ has no cuspidal unipotent module (over $\mathbb{k}$) unless $r=0$ or $r \geq 2n-1$, a cuspidal support of a simple unipotent $\mathbb{k}G$-module corresponds to either a Levi subgroup of type $A_{2n-1}$ or a Levi subgroup of type $B\hskip-1mmC_{r}$ for some $r \leq 2n$. In particular, if $\Lambda$ is a symbol of defect $ D \neq 1$ then $S_{[\Lambda]}$ is a cuspidal $\mathbb{k}G$-module whenever $\widetilde{e}_i \Lambda = 0$ for all $i$.

\subsubsection{The $B_2$-series submatrix}\label{B2 dec} 

\begin{theorem}\label{thm:B2 submatrix} Let $\blambda$ be a bipartition such that $[\blambda]_{B_2}$ belongs to the principal $\Phi_{2n}$-block. If $\blambda=2^{n-1}.$ then $S_{[\blambda]_{B_2}}$ is cuspidal and so $\Psi_{[\blambda]_{B_2}}$ cannot be obtained by Harish-Chandra induction. Otherwise, using truncated induction we obtain the unipotent parts of projective indecomposable characters in the $B_2$-series as follows:
\begin{itemize}
\item If $\blambda=2^k1^{n\mn 1\mn k}.(n\mn1\mn k)$ for some $0\leq k<n\mn1$, then  
$$\Psi_{[\blambda]_{B_2}}=[\blambda]_{B_2}+[2^{k\pl 1}1^{n\mn 2\mn k}.(n\mn2\mn k)]_{B_2}.$$
\item If $\blambda=2^{n\mn 2}.1^2$ then 
$$\Psi_{[\blambda]_{B_2}}=[\blambda]_{B_2}+[2^{n\mn 2}1.1]_{B_2}.$$
\item If $\blambda=2^{n\mn 1\mn k}.k^2$ for some $2\leq k\leq n\mn1$ then 
$$\Psi_{[\blambda]_{B_2}}=[\blambda]_{B_2} + [2^{n\mn 1\mn k}1.k(k\mn1)]_{B_2} + [2^{n\mn k}1.(k\mn 1)(k\mn 2)]_{B_2} + [2^{n\pl 1\mn k}.(k\mn 2)^2]_{B_2}.$$
\item If $\blambda=2^i1^{j\mn i\mn1}.(n\mn i\mn1)(n\mn j)$ with $0<i+1<j<n$ then
\begin{align*} 
\Psi_{[\blambda]_{B_2}}&=[\blambda]_{B_2} + [2^{i\pl1}1^{j\mn i \mn 2}.(n\mn i\mn2)(n\mn j)]_{B_2} + [2^i1^{j\mn i}.(n\mn i\mn 1)(n\mn j\mn 1)]_{B_2} \\&\quad + [2^{i\pl 1}1^{j\mn i\mn 1}.(n\mn i\mn 2)(n\mn j\mn 1)]_{B_2}.
\end{align*}
\end{itemize}
\end{theorem}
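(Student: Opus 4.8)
The plan is to realize each of the stated projective characters $\Psi_{[\blambda]_{B_2}}$ as the image, truncated to the principal $\Phi_{2n}$-block, of an $i$-induction functor $F_i$ applied to a known projective indecomposable module in a group $G_{m}$ with $m<2n$. Since the principal $\Phi_{2n}$-block has weight $2$, the relevant source module will live in a $\Phi_{2n}$-block of weight $1$ (cyclic defect), whose decomposition matrix is completely known by \cite{FoSr90}; in particular the PIMs there are explicitly described, so their characters are available as input. Concretely, for a target symbol $\Lambda$ of defect $-3$ in the block, I would locate a removable $i$-box (in the crystal sense of \S\ref{sssec:branchingrules}) so that $\widetilde{e}_i\Lambda\neq 0$, pass to the corresponding symbol $\Lambda_0$ in the smaller group, take the PIM $\widetilde P_{[\Lambda_0]}$ there, and compute $F_i\widetilde P_{[\Lambda_0]}$ at the level of characters using the $\widehat{\mathfrak{sl}}_{2n}$-action on $K_0$ from Theorem~\ref{thm:crystaliso} and Remark~\ref{rmk:convention}, then project to the principal block.

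The key steps, in order: (1) For each of the four displayed families, identify the color $i\in\{0,\dots,2n-1\}$ and the source symbol $\Lambda_0$; here the description of the $\widehat{\mathfrak{sl}}_{2n}$-action on symbols in Remark~\ref{rmk:convention} ($f_i$ raises entries $\equiv i$ in the top row and entries $\equiv i+n$ in the bottom row) lets one read off which box to add. (2) Compute $f_i\cdot\mathsf{dec}[\Lambda_0]$ in $K_0(\bbk G_\bullet\umod)$; since $\mathsf{dec}[\Lambda_0]$ for a weight-$1$ block is known, $F_i$ applied to the PIM $\widetilde P_{[\Lambda_0]}$ is likewise computable, as $F_i$ is exact and biadjoint, hence sends projectives to projectives. (3) Identify the constituents of the resulting character with the unipotent characters listed, using the explicit symbol combinatorics of \S\ref{ssec:2nblock}; the multiplicities $1$ on each listed character should fall out of the crystal combinatorics together with the unitriangularity \eqref{eq:suni}. (4) Verify that the induced projective is in fact \emph{indecomposable} on the principal block: this follows because, by Theorem~\ref{thm:crystaliso}, $F_i S_{[\Lambda_0]}$ has simple head $S_{[\blambda]_{B_2}}$ (the good addable $i$-box), and the PIM cover of that head must therefore be a summand; a dimension/character count (the induced projective has the same character as the sum of the listed unipotent characters, which supports exactly one PIM once non-principal-block constituents are discarded) pins it down. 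For the case $\blambda=2^{n-1}.$, step (1) fails precisely because $\widetilde{e}_i\Lambda=0$ for all $i$ — the symbol is a lowest weight vector in its crystal component — whence $S_{[\blambda]_{B_2}}$ is cuspidal by the criterion stated just before the theorem, and this is the content of the first assertion.

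The main obstacle I expect is step (3)–(4): bookkeeping which constituents of $F_i\widetilde P_{[\Lambda_0]}$ survive the truncation to the principal $\Phi_{2n}$-block, and proving the induced module does not break up further. The truncated induction can a priori produce a projective with several indecomposable summands, and one must argue — via the branching graph of Theorem~\ref{thm:crystaliso}, i.e. by checking the $i$-word of $\Lambda_0$ has a single uncancelled $+$ so that the head of $F_i S_{[\Lambda_0]}$ is simple, together with exactness of $F_i$ on the module category (not just $K_0$) — that only one PIM appears. A secondary technical point is keeping the symbol conventions consistent: the charges $\bsigma_t$ for $t=0$ (principal series, defect $1$), $t=1$ ($B_2$-series, defect $-3$), and $t=2$ ($B_6$-series, defect $5$) all appear, and the $f_i$-action mixes series, so one must carefully track when an $i$-induction from a defect-$(-3)$ symbol lands on a defect-$(-3)$ versus a defect-$1$ symbol (it stays within defect $-3$ here since $n$-co-hook addition preserves defect, matching the block structure of \S\ref{ssec:2nblock}). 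I would organize the computation case by case following exactly the four bullet points, treating the boundary cases ($k=n-2$ in the first bullet, small $i,j$ in the last) separately where the generic pattern degenerates.
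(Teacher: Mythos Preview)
Your overall plan matches the paper's: for each non-cuspidal $\blambda$ you pick $i$ and $\bmu$ with $\tilde f_i\bmu=\blambda$, take the PIM over $G_{2n-1}$ (known from \cite{FoSr90} since those blocks have weight $\leq 1$), apply $F_i$, and read off the character. The cuspidality of $2^{n-1}.$ is also handled the same way.

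The gap is in your step (4). Knowing that $F_iS_{[\bmu]_{B_2}}$ has simple head does \emph{not} imply that $F_iP_{[\bmu]_{B_2}}$ has simple head when $P_{[\bmu]_{B_2}}\neq S_{[\bmu]_{B_2}}$; and in the third and fourth bullets the source PIM has two unipotent constituents (weight-$1$ Brauer tree), so your argument breaks down exactly there. Your proposed ``dimension/character count'' is circular: to know the induced character supports only one PIM you must already know the other PIMs in the block. The paper resolves indecomposability by a recursive scheme rather than a head argument. For the first bullet it runs \emph{downward} induction on $k$: at $k=n-2$ the second constituent is $[2^{n-1}.]_{B_2}$, whose PIM cannot be an induced summand by cuspidality; then for smaller $k$ the second constituent's PIM is already known (by the case $k+1$) to have two terms, so it cannot split off. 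The second bullet then follows using the first. For the third and fourth bullets the paper observes that the simples there have cuspidal support $[B_2]$ (source vertex $\varnothing$ in the crystal), whereas the simples from the first two bullets have a different cuspidal support; hence PIMs of the latter type cannot occur as summands of the induced projective, and one finishes by induction on $|\lambda^2|$. This cuspidal-support separation is the key idea your proposal is missing. (Two minor corrections: the source block for the first two bullets has weight $0$, not $1$, which is why $\Psi_{[\bmu]_{B_2}}=[\bmu]_{B_2}$ there; and $f_i$ preserves the defect of a symbol, so it does not mix Harish-Chandra series.)
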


\begin{proof}
If $\blambda=2^{n-1}.$ then $\tilde{e}_i\blambda=0$ for all $i\in\mathbb{Z}/2n\mathbb{Z}$. Since $S_{[\blambda]_{B_2}}$ does not have cuspidal support on a type $A$ parabolic, it follows that $S_{[\blambda]_{B_2}}$ is cuspidal. For the unipotent part of the characters of the projective covers of non-cuspidal simple representations in the $B_2$-series, 
we will find $\Psi_{[\blambda]_{B_2}}$ by taking the $i$-induction of $\Psi_{[\bmu]_{B_2}}$ for some $i\in\mathbb{Z}/2n\mathbb{Z}$ such that $\tilde{f}_i\bmu=\blambda$. We find that in this case, $f_i\Psi_{[\bmu]_{B_2}}$ is indecomposable and equals $\Psi_{[\blambda]_{B_2}}$. Note that the $\ell$-blocks of $G_{2n-1}$ have either trivial or cyclic defect groups, so that the PIMs can be easily obtained from \cite{FoSr90}.
\smallskip

Let $\blambda=2^k1^{n\mn 1\mn k}.(n\mn1\mn k)$ for some $0\leq k<n-1$. Set $\bmu=2^k1^{n\mn1\mn k}.(n\mn2\mn k)$. Then $\blambda=\tilde{f}_{2n\mn k\mn1}\bmu$. We have $\Psi_{[\bmu]_{B_2}}=[\bmu]_{B_2}$ by \cite{FoSr90}. (Note that this implies the cuspidal support of $S_{[\bmu]_{B_2}}$, and thus also of $S_{[\blambda]_{B_2}}$, is different from $[B_2]$, and is labeled by some non-empty bipartition in the $B_2$-series, see \cite{FoSr90}.) Then $\Psi_{[\blambda]_{B_2}}=\Psi_{[\tilde{f}_{2n\mn k\mn1}\bmu]_{B_2}}$ is a summand of 
 $$f_{2n\mn k\mn1}\Psi_{[\bmu]_{B_2}}=f_{2n\mn k\mn1}[\bmu]_{B_2}=[\blambda]_{B_2}+[2^{k\pl1}1^{n\mn2\mn k}.(n\mn2\mn k)]_{B_2}.$$
 Thus either $f_{2n\mn k\mn1}\Psi_{[\bmu]_{B_2}}$ is the unipotent part of an indecomposable projective character and equals $\Psi_{[\blambda]_{B_2}}$, or it is the sum of the unipotent parts of two indecomposable  projective characters, each of which would have to contain a single unipotent character. 
If $k=n-2$ then $\Psi_{[2^{k\pl1}1^{n\mn 2\mn k}.(n\mn2\mn k)]_{B_2}}=\Psi_{[2^{n\mn1}.]_{B_2}}$ does not appear as a summand of any induced projective character by cuspidality. Therefore $\Psi_{[\blambda]_{B_2}}=[\blambda]_{B_2}+[2^{k\pl1}1^{n\mn 2\mn k}.(n\mn2\mn k)]_{B_2}$ when $k=n-2$. Then by downwards induction on $k$, $f_{2n\mn1\mn k}\Psi_{[\bmu]_{B_2}}=\Psi_{[\blambda]_{B_2}}$ for all $0\leq k<n-2$ as well, and in particular, the desired character formula holds.

\smallskip

Next, let $\blambda=2^{n-2}.1^2$. Set $\bmu=2^{n-2}.1$. Then $\blambda=\tilde{f}_{n}\bmu$. We have  $\Psi_{[\bmu]_{B_2}}=[\bmu]_{B_2}$ by \cite{FoSr90}.  Therefore $\Psi_{[\blambda]_{B_2}}=\Psi_{[\tilde{f}_{n}\bmu]_{B_2}}$ is a summand of 
$$f_{n}\Psi_{[\bmu]_{B_2}}=f_{n}[\bmu]_{B_2}=[\blambda]_{B_2}+[2^{n-2}1.1]_{B_2},$$
thus either $\Psi_{[\blambda]_{B_2}}=[\blambda]_{B_2}$ and $[2^{n-2}1.1]_{B_2}=\Psi_{[2^{n-2}1.1]_{B_2}}$, or $\Psi_{[\blambda]_{B_2}}=f_{n}\Psi_{[\bmu]_{B_2}}$. We showed in the previous paragraph that $\Psi_{[2^{n-2}1.1]_{B_2}}=[2^{n-2}1.1]_{B_2}+[2^{n-1}.]_{B_2}$. Therefore  $\Psi_{[\blambda]_{B_2}}=f_{n}\Psi_{[\bmu]_{B_2}}$ and the desired character formula holds.

\smallskip

In the case that $\blambda=2^{n\mn1\mn k}.k^2$ for $2\leq k\leq n-1$, we have $\blambda=\tilde{f}_{n\pl k\mn1} 2^{n\mn 1\mn k}.k(k\mn 1)$. In the case that $\blambda=2^i1^{j\mn i\mn 1}.(n\mn i\mn 1)(n\mn j)$ for $1< j-i<n$, we have $\blambda=\tilde{f}_{2n\mn i\mn 1}2^i1^{j\mn i\mn 1}.(n\mn i\mn 2)(n\mn j)$.
 By \cite{FoSr90} the following characters are the unipotent part of PIMs of $G_{2n-1}(q)$:
\begin{align*}
&\Psi_{[2^{n\mn 1\mn k}.k(k\mn 1)]_{B_2}}=[2^{n\mn k\mn 1}.k(k\mn 1)]_{B_2}+[2^{n\mn k}1.(k\mn 2)^2]_{B_2},\\
&\Psi_{[ 2^i1^{j\mn i\mn 1}.(n\mn i\mn 2)(n\mn j) ]_{B_2}}=[ 2^i1^{j\mn i\mn 1}.(n\mn i\mn 2)(n\mn j) ]_{B_2}+[2^i1^{j\mn i}.(n\mn i\mn 2)(n\mn j\mn1)]_{B_2}.
\end{align*}
 In either case, the cuspidal support of $S_{[\blambda]_{B_2}}$ is then equal to $[B_2]$, see \cite{FoSr90} (and the source vertex of $\blambda $ in the $\widehat{\mathfrak{sl}}_{2n}$-crystal is the empty bipartition). This means that if $\bnu$ is one of the bipartitions of the first two types listed in the theorem or  $\bnu=2^{n-1}.$, then $\Psi_{[\bnu]_{B_2}}$ cannot appear as a summand of $f_{n\pl k\mn 1}\Psi_{[2^{n\mn 1\mn k}.k(k\mn 1)]_{B_2}}$ or $f_{2n\mn i\mn 1}\Psi_{[2^i1^{j\mn i\mn 1}.(n\mn i\mn 2)(n\mn j)]_{B_2}}$. 

\smallskip

 We compute that 
\begin{align*}
&f_{n\pl k\mn 1}\Psi_{[2^{n\mn 1\mn k}.k(k\mn 1)]_{B_2}}=[2^{n\mn 1\mn k}.k^2]_{B_2} + [2^{n\mn 1\mn k}1.(k)(k\mn 1)]_{B_2} + [2^{n\mn k}1.(k\mn 1)(k\mn 2)]_{B_2} \\&\qquad\qquad\qquad \qquad\qquad + [2^{n\pl 1\mn k}.(k\mn 2)^2]_{B_2},\\
&f_{2n\mn i\mn 1}\Psi_{[2^i1^{j\mn i\mn 1}.(n\mn i\mn 2)(n\mn j)]_{B_2}}=[2^i1^{j\mn i\mn 1}.(n\mn i\mn 1)(n\mn j)]_{B_2} + [2^{i\pl 1}1^{j\mn i \mn 2}.(n\mn i\mn 2)(n\mn j)]_{B_2}\\&\qquad\qquad\qquad \qquad\qquad + [2^i1^{j\mn i}.(n\mn i\mn 1)(n\mn j\mn 1)]_{B_2} + [2^{i\pl 1}1^{j\mn i\mn 1}.(n\mn i\mn 2)(n\mn j\mn 1)]_{B_2}.
\end{align*}
By induction on the number of boxes in $\lambda^2$, together with the fact that no $\Psi_{[\bnu]_{B_2}}$ as above can be a summand of these induced projective characters, it follows that for none of the bipartitions $\brho$ appearing on the right-hand side of these formulas can $\Psi_{[\brho]_{B_2}}$ be a summand except for $\brho=\blambda$. 
That is, $f_{n\pl k\mn 1}\Psi_{[2^{n\mn 1\mn k}.k(k\mn 1)]_{B_2}}=\Psi_{[2^{n\mn 1\mn k}.k^2]_{B_2}}$ and $f_{2n\mn i\mn 1}\Psi_{[2^i1^{j\mn i\mn 1}.(n\mn i\mn 2)(n\mn j)]_{B_2}}=\Psi_{[2^i1^{j\mn i\mn 1}.(n\mn i\mn 1)(n\mn j)]_{B_2}}$.
We thus get the desired character formulas for $\Psi_{[\blambda]_{B_2}}$ in the final two cases listed in the theorem. This concludes the proof.
\end{proof}

\subsubsection{The $B_6$-series submatrix}\label{B6 dec} The proof of Theorem \ref{thm:B6 submatrix} is identical to the proof of Theorem \ref{thm:B2 submatrix}, except that the roles of $\lambda^1$ and $\lambda^2$ are switched.

\begin{theorem}\label{thm:B6 submatrix}
Let $\blambda$ be a bipartition such that $[\blambda]_{B_6}$ belongs to the principal $\Phi_{2n}$-block. If $\blambda=.2^{n\mn 3}$ then $S_{[\blambda]_{B_6}}$ is cuspidal and so $\Psi_{[\blambda]_{B_6}}$ cannot be obtained by Harish-Chandra induction. Otherwise, using truncated induction we obtain the unipotent parts of projective indecomposable characters in the $B_6$-series as follows:
\begin{itemize}
\item If $\blambda=(n\mn3\mn k).2^k1^{n\mn3\mn k}$ for some $0\leq k<n- 3$ then 
$$\Psi_{[\blambda]_{B_6}}=[\blambda]_{B_6}+[(n\mn4\mn k).2^{k\pl1}1^{n\mn4\mn k}]_{B_6}.$$
\item If $\blambda=1^2.2^{n\mn4}$ then
$$\Psi_{[\blambda]_{B_6}}=[\blambda]_{B_6}+[1.2^{n\mn4}1]_{B_6}.$$
\item If $\blambda=k^2.2^{n\mn3\mn k}$ for some $2\leq k\leq n\mn3$ then
$$\Psi_{[\blambda]_{B_6}}=[\blambda]_{B_6}+[(k)(k\mn1).2^{n\mn3\mn k}1]_{B_6}+[(k\mn1)(k\mn2).2^{n\mn2\mn k}1]_{B_6}+[(k\mn2)^2.2^{n\mn1\mn k}]_{B_6}.$$
\item If $\blambda=(n\mn i\mn2)(n\mn j\mn1).2^{i\mn1}1^{j\mn i\mn1}$ for some $0<i<j\mn 1<n\mn 2$ then 
\begin{align*}
\Psi_{[\blambda]_{B_6}}&=[\blambda]_{B_6} + [(n\mn i\mn 3)(n\mn j\mn 1).2^i1^{j\mn i\mn2}]_{B_6} + [(n\mn i\mn 2)(n\mn j\mn 2).2^{i\mn 1}1^{j\mn i}]_{B_6}\\
&\quad  + [(n\mn i\mn 3)(n\mn j\mn 2).2^i1^{j\mn i\mn 1}]_{B_6}.
\end{align*}
\end{itemize}
\end{theorem}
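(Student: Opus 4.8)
The plan is to mirror the proof of Theorem~\ref{thm:B2 submatrix} almost verbatim, exchanging the roles of $\lambda^1$ and $\lambda^2$ throughout. Concretely, I would first observe that for $\blambda = .2^{n-3}$ all the Kashiwara operators $\widetilde e_i$ annihilate $\blambda$ (the source vertex of its component in the $\widehat{\mathfrak{sl}}_{2n}$-crystal), so by the discussion at the start of \S\ref{B2 dec} -- a simple $\bbk G$-module whose symbol has defect $D\neq 1$ and which is killed by every $\widetilde e_i$ must have cuspidal support on a $BC$-type Levi, hence be cuspidal -- the module $S_{[\blambda]_{B_6}}$ is cuspidal and $\Psi_{[\blambda]_{B_6}}$ cannot be produced by Harish-Chandra induction. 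This handles the exceptional case.

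For the remaining four families I would proceed exactly as in Theorem~\ref{thm:B2 submatrix}: for each target bipartition $\blambda$ I exhibit a bipartition $\bmu$ and an index $i\in\bbZ/2n\bbZ$ with $\widetilde f_i\bmu=\blambda$, compute $f_i\Psi_{[\bmu]_{B_6}}$, and argue that this induced projective character is in fact indecomposable and therefore equals $\Psi_{[\blambda]_{B_6}}$. The starting data $\Psi_{[\bmu]_{B_6}}$ come from Fong--Srinivasan \cite{FoSr90}: since the proper Levi $G_{2n-1}$ has only cyclic or trivial defect unipotent blocks, the relevant $\Psi_{[\bmu]_{B_6}}$ are either single characters or sums of two characters, and the cuspidal support of $S_{[\bmu]_{B_6}}$ is read off from \cite{FoSr90} as well. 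The computation of $f_i\Psi_{[\bmu]_{B_6}}$ is done on symbols via Remark~\ref{rmk:convention} and the $\widetilde f_i$-rules on symbols recalled in \S\ref{sssec:branchingrules}; because $\lambda^1$ and $\lambda^2$ swap roles relative to the $B_2$ case, the choices are $\bmu=(n\mn3\mn k).2^k1^{n\mn4\mn k}$ with $i=n\pl k$ for the first family, $\bmu=1.2^{n\mn4}$ for the second, $\bmu=(k)(k\mn1).2^{n\mn3\mn k}$ for the third, and $\bmu=(n\mn i\mn2)(n\mn j\mn1).2^{i\mn1}1^{j\mn i\mn2}$ for the fourth (exact indices to be pinned down during the routine calculation).

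The indecomposability argument is the same two-pronged induction as before. For the first family one inducts downwards on $k$ starting from $k=n-4$: there the only bipartition that could a priori split off is $.2^{n-3}$, which is impossible by cuspidality, so $\Psi_{[\blambda]_{B_6}}$ is the whole induced character, and the downward induction propagates this. The second family then follows by comparing with the now-known $\Psi_{[1.2^{n\mn4}1]_{B_6}}$. For the third and fourth families one inducts on the number of boxes of $\lambda^1$ (the component playing the role $\lambda^2$ played in Theorem~\ref{thm:B2 submatrix}), using that the cuspidal support of $S_{[\blambda]_{B_6}}$ equals $[B_6]$ -- equivalently, its crystal source is the empty bipartition -- so that none of the $\Psi_{[\bnu]_{B_6}}$ from the first two families, nor $\Psi_{[.2^{n-3}]_{B_6}}$, can occur as a summand of the induced characters; this forces every bipartition $\brho\ne\blambda$ on the right-hand side to fail to contribute its own PIM, and the stated formula follows.

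The main obstacle is bookkeeping rather than conceptual: one must carefully recompute all the $i$-induction formulas on the $B_6$-symbols (defect $5$), keeping track of the $d/2$-shift in the bottom row that governs $\widetilde f_i$ on $\lambda^2$, and verify in each of the four families that the Fong--Srinivasan input $\Psi_{[\bmu]_{B_6}}$ really is a PIM of $G_{2n-1}$ with the cuspidal support claimed. Since the symbol combinatorics for the $B_6$-series is the ``$\lambda^1\leftrightarrow\lambda^2$'' mirror of the $B_2$-series -- compare the two rectangle-and-skew-shape pictures in \S\ref{ssec:2nblock} -- every step of the $B_2$ proof transports without change once the indices are correctly mirrored, so no new ideas are needed.
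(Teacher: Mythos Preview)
Your proposal is correct and follows exactly the approach of the paper, which states in a single sentence that the proof is identical to that of Theorem~\ref{thm:B2 submatrix} with the roles of $\lambda^1$ and $\lambda^2$ switched. Your sketch supplies more of the bookkeeping than the paper does, but the strategy---checking $\widetilde e_i\,(.2^{n-3})=0$ for cuspidality, then for each remaining family choosing $\bmu$ in a cyclic-defect block of $G_{2n-1}$ with $\widetilde f_i\bmu=\blambda$, computing $f_i\Psi_{[\bmu]_{B_6}}$ from \cite{FoSr90}, and running the same downward/upward inductions on $k$ and on $|\lambda^1|$ to force indecomposability---is precisely the mirror of the $B_2$ argument the paper intends.
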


\subsubsection{The principal series submatrix}\label{principal series dec} We will give explicit formulas in the theorem below, but first, for the sake of intuition we sketch a visual and conceptual way to describe the unipotent constituents of the projective indecomposable characters in the principal $\Phi_{2n}$-block that belong to the principal series,  inspired by \cite{ChTu}. One can make a simple graph with vertices the bipartitions $\blambda$ of $2n$ belonging to the principal $\Phi_{2n}$-block, and an edge between $\blambda$ and $\bmu$ if and only if $\bmu$ is obtained from $\blambda$ by moving either a single row or a single column of boxes preserving their charged contents mod $d$. In this case, we place $\bmu$ below $\blambda$ if $\bmu\prec_{\bs_0}\blambda$.
We refer the reader  to \cite[\S 5.4--5.5]{GrNo} for a picture of these posets (where they are drawn as simple directed graphs). The resulting graph is planar. 

\smallskip

One can then make a CW-complex where $2$-cells fill the diamond-and-triangle-shaped regions of the graph, each edge is a $1$-cell, and each vertex a $0$-cell. Each vertex $\blambda$ in the graph is then the top vertex of the closure of a cell of maximal possible dimension, or the top vertex of a union of closures of cells sharing that top vertex.  These shapes are either a diamond, a triangle, a single edge, a pair of edges (this happens only for $\blambda=1^{n+1}.1^{n-1}$), or simply the vertex itself. There turn out to be two such shapes that are points, and these are the two cuspidals. For the $\blambda$ that are top vertices of $1$- and $2$-dimensional shapes, taking all vertices in the boundary of the shape and reading off their labels $\bmu$ yields the unipotent constituents $[\bmu]$ of $\Psi_{[\blambda]}$. Moreover, those $\blambda$ at the top of $2$-dimensional shapes label the irreducible representations of the Hecke algebra, while the $\blambda$ that are only at the top of $1$-dimensional shapes label the simple modules with cuspidal support in some non-trivial, proper parabolic.  All of this is just an interpretation of the formulas in the following theorem, which are the same formulas as for the principal block of Category $\cO$ of the rational Cherednik algebra of $B_{2n}$ at parameters $(\frac{1}{2n},\frac{1}{2n})$ \cite{GrNo}.

\begin{theorem}\label{thm:principal series submatrix}
Let $\blambda$ be a bipartition of $2n$ such that $[\blambda]$ belongs to the principal $\Phi_{2n}$-block. If $\blambda=1^{2n}.$ or $.1^{2n}$ then $S_{[\blambda]}$ is cuspidal and so $\Psi_{[\blambda]}$ cannot be obtained by Harish-Chandra induction. Otherwise, the column of the decomposition matrix labeled by $\blambda$ is given by the appropriate choice of formula below:
\begin{itemize}
\item If $\blambda=(n + k)1^{n - k}.$ for some $1\leq k \leq n$, then 
$$
\Psi_{[(n\pl k)1^{n\mn k}.]}=[(n\pl k)1^{n\mn k}.]+[(n\pl k\mn 1)1^{n\mn k\pl 1}.]+[(n\mn 1)1^{n\mn k}.k\pl 1]+[(n\mn1)1^{n\mn k\pl 1}.k].
$$

\item If $\blambda=n1^n.$ then 
$$
\Psi_{[n1^n.]}=[n1^n.]+[(n\mn 1)1^n.1]+[(n\mn 1)1^{n\pl 1}.].
$$

\item If $\blambda=.(2n - k)1^k$ for some $0\leq k\leq n- 3$ then 
$$
\Psi_{[.(2n\mn k)1^k]}=[.(2n\mn k)1^k]+[n\mn 1\mn k.(n\pl 1)1^k]+[n\mn k\mn 2.(n\pl 1)1^{k\pl 1}]+[.(2n\mn k\mn 1)1^{k\pl 1}].
$$

\item If $\blambda=.(n+2)1^{n- 2}$ then 
$$
\Psi_{[.(n\pl 2)1^{n\mn 2}]}=[.(n\pl 2)1^{n\mn 2}]+[1.(n\pl 1)1^{n\mn 2}]+[.(n\pl 1)1^{n\mn 1}].
$$

\item If $\blambda=1^{k+1}.(n+1-k)1^{n-2}$ for some $0\leq k\leq n-1$ then 
$$
\Psi_{[1^{k\pl 1}.(n\pl 1\mn k)1^{n\mn2}]}=[1^{k\pl 1}.(n\pl 1\mn k)1^{n\mn2}]+[1^{k\pl 2}.(n\mn k)1^{n\mn2}]+[.(n\pl 1\mn k)1^{n\mn 1\pl k}] +[.(n\mn k)1^{n\pl k}].
$$ 

\item If $\blambda=(n-k)1^n.1^k$ for some $1\leq k\leq n-2$ then 
$$
\Psi_{[(n\mn k)1^n.1^k]}=[(n\mn k)1^n.1^k]+[(n\mn k\mn 1)1^n.1^{k\pl 1}]+[(n\mn k)1^{n\pl k}.]+[(n\mn k\mn 1)1^{n\pl k\pl 1}.].
$$

\item If $\blambda=(n-i)1^j.(n-j+1)1^{i-1}$ for some $1\leq i\leq n-2$ and some $0\leq j\leq n-1$ then 
$$
\Psi_{[\blambda]}=[(n\mn i)1^j.(n\mn j\pl 1)1^{i\mn 1}]+[(n\mn i)1^{j\pl 1}.(n\mn j)1^{i\mn 1}]+[(n\mn i\mn 1)1^j.(n\mn j\pl 1)1^i]+[(n\mn i\mn 1)1^{j\pl 1}.(n\mn j)1^i].
$$

\item If $\blambda=1^{n+1}.1^{n-1}$ then 
$$
\Psi_{[1^{n\pl 1}.1^{n\mn 1}]}=[1^{n\pl 1}.1^{n\mn 1}]+[1^{2n}.]+[.1^{2n}].
$$

\item If $\blambda=k1^{2n-k}.$ for some $2\leq k\leq n-1$ then 
$$
\Psi_{[(k)1^{2n\mn k}.]}=[k1^{2n\mn k}.]+[(k\mn 1)1^{2n\mn k\pl 1}.].
$$

\item If $\blambda=.k1^{2n- k}$ for some $2\leq k\leq n+1$ then 
$$
\Psi_{[.k1^{2n\mn k}]}=[.k1^{2n\mn k}]+[.(k\mn 1)1^{2n\mn k\pl 1}].
$$

\end{itemize}

\end{theorem}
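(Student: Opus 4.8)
The plan is to mimic the proof of Theorem~\ref{thm:B2 submatrix} essentially verbatim, exploiting the $\widehat{\mathfrak{sl}}_{2n}$-crystal structure and the fact that the relevant proper Levi subgroups have unipotent blocks of cyclic or trivial defect (so their PIMs are read off from \cite{FoSr90}). First I would identify, for each bipartition $\blambda$ in the principal series listed in the statement that is \emph{not} one of the two cuspidals $1^{2n}.$ and $.1^{2n}$, a bipartition $\bmu$ together with an index $i\in\mathbb{Z}/2n\mathbb{Z}$ such that $\tilde f_i\bmu=\blambda$ in the crystal on $\bigoplus_t\mathsf{F}(\bfs_t)$, chosen so that $\bmu$ sits in a smaller group $G_{2n-1}$ or in a $B_{t^2+t}$-series with $t>0$ for which $\Psi_{[\bmu]}$ is already known (typically $\Psi_{[\bmu]}=[\bmu]$, i.e.\ $[\bmu]$ is projective). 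By Theorem~\ref{thm:crystaliso}, $i$-induction $F_i$ categorifies $f_i$, so $F_i(\widetilde P_{[\bmu]})$ is a projective module whose unipotent character is $f_i\Psi_{[\bmu]}$, and $\Psi_{[\blambda]}$ must be a summand of it. One then computes $f_i\Psi_{[\bmu]}$ explicitly using the combinatorial action of $f_i$ on symbols from Remark~\ref{rmk:convention}, and argues that it is in fact indecomposable, hence equals $\Psi_{[\blambda]}$.

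The indecomposability argument is the crux, and I would run it exactly as in Theorem~\ref{thm:B2 submatrix}: by unitriangularity of the decomposition matrix (\S\ref{sssec:dec}), if $f_i\Psi_{[\bmu]}$ were not $\Psi_{[\blambda]}$ it would decompose as a sum of several $\Psi_{[\brho]}$'s, one of which is $\Psi_{[\blambda]}$ and the others of which are indexed by symbols $\brho$ strictly dominated (in the family order, together with the refinement coming from the crystal) by those appearing. The key input is the determination of which simple modules in the principal $\Phi_{2n}$-block are cuspidal, respectively have cuspidal support on a type $A_{2n-1}$ Levi: as noted in \S\ref{ssec:2nblock}, a simple $\bbk G$-module with $\widetilde e_i\Lambda=0$ for all $i$ and defect $D\neq1$ is cuspidal, and for defect $1$ symbols the cuspidal support is detected by whether the source of $\blambda$ in the crystal is $1^{2n}.$, $.1^{2n}$, or a source living in a nontrivial Harish-Chandra series over a type $A$ Levi. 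Since cuspidal PIMs and PIMs with type $A$ cuspidal support cannot occur as summands of Harish-Chandra induced projectives, one shows by induction (on the number of boxes in $\lambda^2$, or on $k$, depending on the family) that no proper sub-sum of $f_i\Psi_{[\bmu]}$ can be a $\Psi$, forcing indecomposability. The enumeration of the principal series in \S\ref{ssec:2nblock}(1) — the $n^2-1$ symbols $[(n\mn i)1^j.(n\mn j\pl1)1^{i\mn1}]$, the $2n$ symbols $[(2n\mn j)1^j.]$, and the $2n$ symbols $[.(2n\mn i\pl1)1^{i-1}]$ — exactly matches the case division in the theorem, so the bookkeeping is a finite but systematic check.

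Concretely, I expect the following case correspondences: the families $k1^{2n-k}.$ and $.k1^{2n-k}$ (for $k$ small) are handled by a single $i$-induction producing a two-term column, with the indecomposability forced because the only alternative summand would have to be projective irreducible, which can be excluded by comparing with Deligne--Lusztig data or by the source-vertex analysis; the $1^{n+1}.1^{n-1}$ case is the special ``pair of edges'' vertex whose column $[1^{n+1}.1^{n-1}]+[1^{2n}.]+[.1^{2n}]$ involves both cuspidals, and here one inducts \emph{to} this vertex from a neighbour and uses that neither cuspidal's PIM can be an induced summand — so the three-term expression is exactly $f_i\Psi_{[\bmu]}$; and the generic four-term columns (the families $(n\pl k)1^{n\mn k}.$, $.(2n\mn k)1^k$, $1^{k\pl1}.(n\pl1\mn k)1^{n\mn2}$, $(n\mn k)1^n.1^k$, and the two-parameter family $(n\mn i)1^j.(n\mn j\pl1)1^{i\mn1}$) mirror the four-term $B_2$-series formulas, inducting from a symbol with a known (often length-two) column. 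The main obstacle will be the indecomposability bookkeeping in the four-term cases: one must verify that \emph{all three} of the non-$\blambda$ constituents of $f_i\Psi_{[\bmu]}$ fail to be tops of induced PIMs, which requires knowing their crystal sources, and I would handle this by an induction on box-count in $\lambda^2$ (as in Theorem~\ref{thm:B2 submatrix}) together with the observation that the $2$-dimensional-cell vertices are precisely those labelling $\bbk$-representations of the Hecke algebra of $W_{2n}$, whose projective covers restricted to the Hecke algebra are the indecomposable projective Hecke modules — so their indecomposability over $\bbk G$ is inherited. The final two-term families are then mopped up by downward induction exactly as in the last paragraph of the proof of Theorem~\ref{thm:B2 submatrix}.
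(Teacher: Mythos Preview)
Your overall strategy matches the paper's for most cases --- indeed the paper itself remarks that the first seven (Hecke-algebra) formulas ``may easily be checked to coincide with $F_i\Psi_{[\bmu]}$ for $\bmu$ and $i$ such that $\tilde f_i\bmu=\blambda$'', so your proposal to redo those by $i$-induction rather than citing \cite{Fayers} is a viable (if more laborious) route. The last two families of two-term columns are also handled exactly as you suggest, by downward induction on~$k$.

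There is, however, a genuine gap in your treatment of $\blambda=1^{n+1}.1^{n-1}$. You propose to ``induct to this vertex from a neighbour'' via some $f_i$, but this vertex has no incoming crystal edge: one checks directly that $\tilde e_i(1^{n+1}.1^{n-1})=0$ for every $i\in\mathbb{Z}/2n\mathbb{Z}$ (the only removable boxes have charged contents $-n$ and $1$, and in both residues the $i$-word reduces to empty after a single $-+$ cancellation). Consequently $E\,S_{[1^{n+1}.1^{n-1}]}=0$, and by adjunction $P_{[1^{n+1}.1^{n-1}]}$ is \emph{not} a summand of any $F_i$-induced projective. So your method cannot reach this column at all. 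The paper instead uses Harish-Chandra induction from the Levi $L$ of type $A_{2n-1}$: the character $bR_L^G\Psi_{1^{2n}}=[1^{n+1}.1^{n-1}]+[1^{2n}.]+[.1^{2n}]$ contains $\Psi_{[1^{n+1}.1^{n-1}]}$ by maximality of its family, and one writes $\Psi_{[1^{n+1}.1^{n-1}]}=[1^{n+1}.1^{n-1}]+\alpha[1^{2n}.]+\beta[.1^{2n}]$ with $\alpha,\beta\in\{0,1\}$. Applying $e_n$ and $e_1$ and demanding that the results be projective characters forces $\alpha=\beta=1$.

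This also affects the logical order of the cuspidality argument. Cuspidality of $S_{[1^{2n}.]}$ and $S_{[.1^{2n}]}$ requires ruling out cuspidal support in the type $A_{2n-1}$ Levi as well as in $B\hskip-1mm C_{2n-1}$; the crystal computation $\tilde e_i=0$ handles only the latter. The paper deduces the former \emph{after} establishing $\Psi_{[1^{n+1}.1^{n-1}]}=bR_L^G\Psi_{1^{2n}}$, since that equality shows neither $P_{[1^{2n}.]}$ nor $P_{[.1^{2n}]}$ is a summand of the unique relevant type-$A$ induction. Your proposal implicitly assumes cuspidality before it has been proved.
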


\begin{proof} The first seven cases deal with $\blambda $ such that $\blambda$ labels an irreducible representation of the Hecke algebra. The formulas are then given by \cite{Fayers}. They may easily be checked to coincide with $F_i\Psi_{[\bmu]}$ for $\bmu$ and $i$ such that $\tilde{f}_i\bmu=\blambda$.

\smallskip

Let $L$ be the standard Levi subgroup of $G$ of type $A_{2n-1}$. 
If $\blambda=1^{n+1}.1^{n-1}$, we find that $\Psi_{[\blambda]}$ occurs as a summand of $R_{L}^{G}\Psi_{1^{2n}}$ by maximality of the family of $\blambda$ among the families of the three unipotent characters occurring in  $[\blambda]+[1^{2n}.]+[.1^{2n}]$, which equals  $R_{L}^{G}\Psi_{1^{2n}}$ cut to the principal $\Phi_{2n}$-block. Write 
$$\Psi_{[1^{n+1}.1^{n-1}]}=[1^{n+1}.1^{n-1}]+\alpha[1^{2n}.]+\beta [.1^{2n}]$$
for some $\alpha,\beta\in\{0,1\}$. We calculate that 
$$
e_n \Psi_{[1^{n+1}.1^{n-1}]}=[1^{n}.1^{n-1}]+\beta [.1^{2n-1}]
$$
which is only a projective character if $\beta\geq1$.
Then we calculate that
$$e_1\Psi_{[1^{n+1}.1^{n-1}]}=[1^{n+1}.1^{n-2}]+\alpha[1^{2n-1}.]$$
which is only a projective character if $\alpha\geq 1$. We conclude that $\alpha=\beta=1$.

\smallskip

Now we are ready to show that $S_{[1^{2n}.]}$ and $S_{[.1^{2n}]}$ are cuspidal. Since their projective covers are not summands of $R_{L}^{G}\Psi_{1^{2n}}$, these two simple modules are cuspidal if their Harish-Chandra restriction to a standard Levi subgroup of type $B\hskip-1mmC_{2n-1}$ is $0$. This is confirmed by checking that $\tilde{e}_i(1^{2n}.)=0=\tilde{e}_i(.1^{2n})$ for all $i\in\mathbb{Z}/2n \mathbb{Z}$. Indeed, for either bipartition, there is only one removable box, and it is canceled by the addable box of the same residue in the first row of the same component.

\smallskip

It remains to verify the formulas in the last two cases listed in the theorem. Consider first the case $\blambda=k1^{2n-k}.$ for some $2\leq k\leq n-1$. We find that $\tilde{e}_{k-1}\blambda=(k-1)1^{2n-k}.$, which belongs to a defect $1$ block. By  \cite{FoSr90}, $\Psi_{[(k-1)1^{2n-k}.]}=[(k-1)1^{2n-k}.]$.
 Therefore $\Psi_{[\blambda]}$ is a summand of $f_{k-1}\Psi_{[(k-1)1^{2n-k}.]}=f_{k-1}[(k-1)1^{2n-k}.]=[k1^{2n\mn k}.]+[(k\mn 1)1^{2n\mn k\pl 1}.]$. Now we argue by induction on $k$ that this is a projective indecomposable character. The base case is $k=2$: since $S_{[1^{2n}.]}$ is cuspidal, $\Psi_{[1^{2n}.]}$ is not a summand of the projective character $f_0\Psi_{[1^{2n-1}.]}=[21^{2n-2}.]+[1^{2n}.]$. The induction step says that $\Psi_{[k1^{2n\mn k}.]}$ has the desired formula, and therefore is not a summand of the projective character $f_{k}\Psi_{[k1^{2n\mn k\mn1}]}=
 [(k+1)1^{2n\mn k\mn 1}.]+[k1^{2n-k}.]$.
Finally, the argument for the case $\blambda=.k1^{2n-k}$ for some $2\leq k\leq n+1$ is similar.
\end{proof}

\subsection{Cuspidal columns}
In the previous section we have accounted for all the columns of the decomposition matrix of the principal $\Phi_{2n}$-block except the ones corresponding by unitriangularity to the unipotent characters 
$$\big[.1^{2n}\big],\quad \big[1^{2n}.\big],\quad \big[2^{n\mn1}.\big]_{B_2},\quad \big[.2^{n\mn3}\big]_{B_6}.$$
The corresponding projective indecomposable modules (PIMs) are the projective covers of cuspidal simple modules. The purpose of this section is to determine those remaining columns explicitly. As before, we will denote by 
$$\Psi_{[.1^{2n}]},\quad \Psi_{[1^{2n}.]},\quad \Psi_{[2^{n\mn1}.]_{B_2}},\quad \Psi_{[.2^{n\mn3}]_{B_6}}$$
the unipotent part of the characters of the corresponding PIMs. By unitriangularity of the decomposition matrix and maximality of $.1^{2n}$ in the partial order (by $a$-value, or by the reverse dominance order on families), it holds that $\Psi_{[.1^{2n}]}=\big[.1^{2n}\big]$, the Steinberg character. For the remaining three columns we determine the decomposition numbers using arguments from Deligne--Lusztig theory and from Kac--Moody categorification (truncated induction). The latter technique allows us to study the image of the corresponding PIMs under certain sequences of $i$-induction functors, providing us with an argument that all or all but one of the decomposition numbers (excepting the one on the diagonal) are zero. Then, two different Deligne--Lusztig characters give an upper and lower bound for the unique non-zero decomposition number, which turn out to coincide. Before treating each PIM in turn, we start with the following lemma.

\begin{lemma}\label{lem:zeroesfromiinduction}
Let $\Lambda$ be a symbol corresponding to a unipotent character $\big[\Lambda\big]$ of $G$. 
Let $\mathcal{S}$ be a set of symbols $\Lambda' < \Lambda$ such that
$$ \Psi_{[\Lambda]} \in [\Lambda] + \sum_{\Lambda' \in \mathcal{S}} \mathbb{N} [\Lambda'].$$
Let $\mathbf{i} = (i_1,i_2,\ldots,i_r)$ be a tuple of elements of $\mathbb{Z}/2n$ and let $\mathbf{i}^* = (i_r,i_{r-1},\ldots,i_1)$
be the reverse tuple. Assume that 
\begin{itemize}
 \item[(i)] $f_\mathbf{i}\Lambda  = \widetilde f_\mathbf{i}  \Lambda =  \Theta$ for some symbol $\Theta$;
 \item[(ii)] if $\Lambda' \in \mathcal{S}$ and $\Theta'$ occurs in $f_\mathbf{i}\Lambda'$ with $\Theta' < \Theta$ then $e_{\mathbf{i}^*} \Theta'= \Lambda'$.
\end{itemize}
Then $\Psi_{[\Lambda]}$ contains only $[\Lambda]$ and the characters $[\Lambda']$ for symbols $\Lambda'\in\mathcal{S}$ such that there exists $\Theta'$ occurring in $f_\mathbf{i}\Lambda'$ with $\Theta' < \Theta$. 
\end{lemma}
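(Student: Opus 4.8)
The plan is to apply the composite $i$-induction functor $F_{\mathbf i}:=F_{i_r}\circ\cdots\circ F_{i_1}$ to the projective cover $P_{[\Lambda]}$ of the simple module $S_{[\Lambda]}$ and to recover the column $\Psi_{[\Lambda]}$ from the resulting projective module. Since each $F_j$ is exact and biadjoint to the exact functor $E_j$ \cite{DVV17}, the functor $F_{\mathbf i}$ is exact and sends projectives to projectives, and under the identification \eqref{eq:isofock} it acts on $K_0(\bbk G_\bullet\umod)$ by the operator $f_{\mathbf i}=f_{i_r}\cdots f_{i_1}$; its biadjoint is the reversed composite $E_{\mathbf i^*}$, which likewise preserves projectivity (and injectivity) and acts by $e_{\mathbf i^*}$. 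Writing $\Psi_{[\Lambda]}=[\Lambda]+\sum_{\Lambda'\in\mathcal S}m_{\Lambda'}[\Lambda']$ with $m_{\Lambda'}\in\mathbb N$, the unipotent part of the character of the projective module $F_{\mathbf i}P_{[\Lambda]}$ is
$$f_{\mathbf i}\Psi_{[\Lambda]}=f_{\mathbf i}\Lambda+\sum_{\Lambda'\in\mathcal S}m_{\Lambda'}\,f_{\mathbf i}\Lambda'=\Theta+\sum_{\Lambda'\in\mathcal S}m_{\Lambda'}\,f_{\mathbf i}\Lambda',$$
where the last equality uses the part $f_{\mathbf i}\Lambda=\Theta$ of hypothesis~(i) (so the $[\Lambda]$-term collapses to a single symbol with multiplicity one).

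Next I would locate $P_{[\Theta]}$ inside $F_{\mathbf i}P_{[\Lambda]}$. The other part $\widetilde f_{\mathbf i}\Lambda=\Theta$ of hypothesis~(i) means, via Theorem~\ref{thm:crystaliso} (the colored branching graph is the $\widehat{\mathfrak{sl}}_{2n}$-crystal), that $S_{[\Theta]}$ occurs in the head of $F_{\mathbf i}S_{[\Lambda]}$; composing with $P_{[\Lambda]}\twoheadrightarrow S_{[\Lambda]}$ and using exactness of $F_{\mathbf i}$ yields $F_{\mathbf i}P_{[\Lambda]}\twoheadrightarrow S_{[\Theta]}$, so $P_{[\Theta]}$ is a direct summand of the projective module $F_{\mathbf i}P_{[\Lambda]}$. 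Writing $F_{\mathbf i}P_{[\Lambda]}=\bigoplus_\Xi P_{[\Xi]}^{\oplus a_\Xi}$ with $a_\Xi\in\mathbb N$ and $a_\Theta\geq 1$, we get $f_{\mathbf i}\Psi_{[\Lambda]}=\sum_\Xi a_\Xi\Psi_{[\Xi]}$, and by Brauer reciprocity and the unitriangularity of the decomposition matrix (\S\ref{sssec:dec}) each $\Psi_{[\Xi]}$ lies in $[\Xi]+\sum_{\Xi'\lhd\Xi}\mathbb N[\Xi']$. Comparing the two expressions for $f_{\mathbf i}\Psi_{[\Lambda]}$ and peeling off coefficients from the $\lhd$-minimal symbols upward, the core of the argument is to show that the only $\Xi$ with $a_\Xi\neq 0$ are $\Theta$ and the symbols $\Theta'<\Theta$ occurring in some $f_{\mathbf i}\Lambda'$ with $m_{\Lambda'}\neq 0$, and that each such $\Theta'$ has a unique origin: by hypothesis~(ii), $e_{\mathbf i^*}\Theta'=\Lambda'$, hence $\widetilde e_{\mathbf i^*}\Theta'=\Lambda'$, which via the biadjunction $\operatorname{Hom}(F_{\mathbf i}P_{[\Lambda]},-)\cong\operatorname{Hom}(P_{[\Lambda]},E_{\mathbf i^*}-)$ and Theorem~\ref{thm:crystaliso} identifies the bipartition producing $\Theta'$. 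Tracking the coefficients then gives, for every $\Lambda'\in\mathcal S$, that $m_{\Lambda'}\neq 0$ forces $f_{\mathbf i}\Lambda'$ to contain some constituent $\Theta'<\Theta$ — which is exactly the assertion of the lemma.

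I expect the main obstacle to be this last bookkeeping step: controlling the interaction of $f_{\mathbf i}$ with the dominance order on symbols and families, in particular ruling out that $f_{\mathbf i}\Lambda'$ could contribute the symbol $\Theta$ itself, or a symbol incomparable to or above $\Theta$, when $\Lambda'\lhd\Lambda$ (so that $\Theta$ remains the unique $\lhd$-maximal constituent of $f_{\mathbf i}\Psi_{[\Lambda]}$, occurring with multiplicity one), and making precise — using hypothesis~(ii) together with the crystal isomorphism — that each admissible constituent $\Theta'<\Theta$ of $f_{\mathbf i}\Lambda'$ contributes the PIM $P_{[\Theta']}$ to $F_{\mathbf i}P_{[\Lambda]}$ with precisely the multiplicity dictated by $m_{\Lambda'}$, so that no cancellation or spurious contribution can occur. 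Everything else is formal manipulation in $K_0(\bbk G_\bullet\umod)$ using the categorification of \cite{DVV17} and Theorem~\ref{thm:crystaliso}.
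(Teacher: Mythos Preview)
Your overall strategy — apply $F_{\mathbf i}$ to $P_{[\Lambda]}$, extract $P_{[\Theta]}$ as a summand via hypothesis~(i), then come back with $E_{\mathbf i^*}$ — is exactly the paper's. But your execution is more complicated than needed, and one step is wrong.

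The paper does not attempt to decompose $F_{\mathbf i}P_{[\Lambda]}$ completely into PIMs, nor to match multiplicities $a_\Xi$ against $m_{\Lambda'}$. Instead it runs a clean two-step sandwich. First, since $P_{[\Theta]}$ is a summand of $F_{\mathbf i}P_{[\Lambda]}$, every constituent $[\Theta']\neq[\Theta]$ of $\Psi_{[\Theta]}$ must (a)~occur in $f_{\mathbf i}\Psi_{[\Lambda]}$, hence in some $f_{\mathbf i}\Lambda'$ with $\Lambda'\in\mathcal S$ (because $f_{\mathbf i}[\Lambda]=[\Theta]$ by~(i)), and (b)~satisfy $\Theta'<\Theta$ by unitriangularity. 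Second, since $\widetilde f_{\mathbf i}\Lambda=\Theta$ gives $\widetilde e_{\mathbf i^*}\Theta=\Lambda$, the module $P_{[\Lambda]}$ is a summand of $E_{\mathbf i^*}P_{[\Theta]}$, so $\Psi_{[\Lambda]}$ is bounded by $e_{\mathbf i^*}\Psi_{[\Theta]}$. Now hypothesis~(ii) is applied purely at the character level: each such $\Theta'$ satisfies $e_{\mathbf i^*}\Theta'=\Lambda'$, so the only $[\Lambda']$ that can survive in $\Psi_{[\Lambda]}$ are those for which some $\Theta'<\Theta$ occurs in $f_{\mathbf i}\Lambda'$. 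No full decomposition of $F_{\mathbf i}P_{[\Lambda]}$, no tracking of multiplicities, and no crystal statement about the $\Theta'$ is required.

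Your inference ``$e_{\mathbf i^*}\Theta'=\Lambda'$, hence $\widetilde e_{\mathbf i^*}\Theta'=\Lambda'$'' is not valid in general: even when $e_j$ removes a unique box, that box can be cancelled in the reduced $j$-word, so $\widetilde e_j$ may be zero. Fortunately this step is unnecessary — hypothesis~(ii) is only ever used as an identity of virtual characters, not as a crystal statement. Likewise, your worry that some $f_{\mathbf i}\Lambda'$ might contribute symbols incomparable to or above $\Theta$ is real for your decomposition approach, but the paper's sandwich argument sidesteps it entirely: such symbols simply cannot appear in $\Psi_{[\Theta]}$ by unitriangularity, and only $\Psi_{[\Theta]}$ (not all of $f_{\mathbf i}\Psi_{[\Lambda]}$) is fed into $e_{\mathbf i^*}$.
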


\begin{proof}
Recall that $P_{[\Lambda]}$ denotes the PIM corresponding to the unipotent character $[\Lambda]$ by unitriangularity, and that $\Psi_{[\Lambda]}$ is the unipotent part of its character. Since $i$-induction functors are exact, 
the module $f_\mathbf{i} P_{[\Lambda]}$ is projective, and by (i) it contains $P_{[\Theta]}$ as a direct summand.
Therefore any unipotent character $[\Theta'] \neq [\Theta]$ occurring in $\Psi_{[\Theta]}$ corresponds
to a symbol $\Theta'$ which satisfies the following 
two conditions:
\begin{itemize}
 \item there exists $\Lambda' \in \mathcal{S}$ such that $\Theta'$ occurs in $f_\mathbf{i} \Lambda'$;
 \item $\Theta' < \Theta$ (by unitriangularity of the decomposition matrix).
\end{itemize}
Now, by (ii), such a symbol satisfies $e_{\mathbf{i}^*} \Theta' = \Lambda'$. The result follows from 
the fact that $P_{[\Lambda]}$ is a direct summand of $e_{\mathbf{i}^*} P_{[\Theta]}$ by (i).
\end{proof}
 
\begin{remark}\label{rmk:fivskashiwara}
If $\Lambda$ has a unique addable $i$-box (i.e. $f_i \Lambda$ is a symbol) and no removable $i$-box (i.e. $e_i \Lambda = 0$) then $\widetilde f_{i}\Lambda  = f_{i}  \Lambda$ by \S\ref{sssec:branchingrules}. This is often helpful for checking condition (i) of Lemma~\ref{lem:zeroesfromiinduction}. 
\end{remark}

\subsubsection{The column corresponding to $\big[1^{2n}.\big]$} The symbol corresponding to this unipotent character is 
$$\begin{pmatrix}1&0&-1&\ldots&\widehat{\mn 2n\pl1}&\ldots\\ &&-1&\ldots&\ldots&\ldots\end{pmatrix}$$
so that the composition of the associated family is $(1,0,-1,-1,-2,-2,\ldots,\widehat{\mn 2n\pl1},\ldots)$. It dominates $(0,0,-1,-1,-2,-2,\ldots,\widehat{\mn 2n},\ldots)$, which is the family of the Steinberg character $\big[.1^{2n}]$,
and does not dominate any other family of unipotent characters of $G$. Therefore $\Psi_{[1^{2n}.]}=\big[1^{2n}.\big]+\alpha\big[.1^{2n}\big]$ for some $\alpha \geq 0$, by unitriangularity. We shall prove that $\alpha = 0$.

\begin{prop}\label{prop:PIM0}
We have 
$$\Psi_{[1^{2n}.]}=\big[1^{2n}.\big].$$
\end{prop}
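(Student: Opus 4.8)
The plan is to show that $\alpha=0$ by a truncated-induction argument, applying Lemma~\ref{lem:zeroesfromiinduction} to the symbol $\Lambda$ of $[1^{2n}.]$ with $\mathcal S = \{\Lambda'\}$ where $\Lambda'$ is the symbol of the Steinberg character $[.1^{2n}]$. Concretely, $\Psi_{[1^{2n}.]} \in [\Lambda] + \mathbb{N}[\Lambda']$ by the discussion preceding the proposition, so the hypotheses of the lemma are of the required shape. The task is then to exhibit a tuple $\mathbf i = (i_1,\dots,i_r)$ of residues mod $2n$ such that (i) $f_{\mathbf i}\Lambda = \widetilde f_{\mathbf i}\Lambda = \Theta$ is a single symbol, and (ii) whenever $\Theta'$ occurring in $f_{\mathbf i}\Lambda'$ satisfies $\Theta' < \Theta$, one has $e_{\mathbf i^*}\Theta' = \Lambda'$. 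If one can moreover arrange that \emph{no} $\Theta'$ occurring in $f_{\mathbf i}\Lambda'$ is strictly below $\Theta$, then the lemma immediately gives $\Psi_{[1^{2n}.]} = [1^{2n}.]$, i.e. $\alpha = 0$.

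First I would write down $\Lambda$ and $\Lambda'$ explicitly in the infinite two-row convention. We have
$$\Lambda = \begin{pmatrix} 1 & 0 & -1 & \cdots & \widehat{-2n+1} & \cdots \\ & & -1 & \cdots & \cdots & \cdots\end{pmatrix}, \qquad \Lambda' = \begin{pmatrix} 0 & -1 & -2 & \cdots & \widehat{-2n} & \cdots \\ & & -1 & \cdots & \cdots & \cdots\end{pmatrix},$$
the difference between the two being exactly a single $1$-cohook-type move (the top row of $\Lambda$ has entries $1, 0, -1, \dots$ with the value $-2n+1$ omitted; for $\Lambda'$ it is $0, -1, -2, \dots$ with $-2n$ omitted). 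The idea is to choose $\mathbf i$ to be a sequence of $i$-induction steps whose effect on $\Lambda$ (reading off residues mod $2n$ via the rule of Remark~\ref{rmk:convention}: $f_i$ raises an $x_j \equiv i$ or a $y_j \equiv i + n$) is forced at every stage — each intermediate symbol should have a unique addable $i$-box and no removable $i$-box, so that by Remark~\ref{rmk:fivskashiwara} the Kashiwara operator agrees with the algebraic $f_i$ at each step and condition (i) holds automatically. A natural target $\Theta$ would be a symbol deep in some $B_{t^2+t}$-series of a larger group $G_{2n+r}$, or a symbol whose associated family is maximal enough that any competing constituent $\Theta'$ from $f_{\mathbf i}\Lambda'$ is not below it; since $\Lambda'$ is the Steinberg symbol (its family is minimal), one expects $f_{\mathbf i}\Lambda'$ to stay ``low'' and the only constituent below $\Theta$, if any, to be recoverable by the reverse moves $e_{\mathbf i^*}$.

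The key steps, in order, are: (1) record $\Lambda$, $\Lambda'$ and the residues of their entries mod $2n$; (2) select the tuple $\mathbf i$ — I anticipate a short sequence, perhaps of length $O(n)$, sliding the anomalous entries of the top row up until the symbol lands in a higher series — and verify step-by-step that each intermediate symbol has a unique addable box of the prescribed residue and no removable one, giving (i); (3) compute $f_{\mathbf i}\Lambda'$ and check which of its constituents, if any, lie strictly below $\Theta = f_{\mathbf i}\Lambda$ in the dominance order on families, using that $\Lambda' \lhd \Lambda$ is preserved under $i$-induction in the appropriate sense; (4) for each such constituent $\Theta'$ verify $e_{\mathbf i^*}\Theta' = \Lambda'$ by running the moves backwards, which should be mechanical given the rigidity established in step (2); (5) conclude via Lemma~\ref{lem:zeroesfromiinduction} that $\Psi_{[1^{2n}.]}$ contains no constituent $[\Lambda']$ with $\Theta' < \Theta$ occurring in $f_{\mathbf i}\Lambda'$ — and since $[\Lambda'] = [.1^{2n}]$ is the \emph{only} possible off-diagonal constituent, either it is eliminated outright (if $\Theta'$ never drops below $\Theta$) or the lemma still forces $\alpha = 0$ because the reverse-induction recovery shows $\Theta'$ does not in fact appear in $\Psi_{[\Theta]}$. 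The main obstacle will be step (2)–(3): choosing $\mathbf i$ so that the induction path is genuinely rigid for $\Lambda$ (no branching) \emph{and simultaneously} the corresponding path applied to the Steinberg symbol $\Lambda'$ either avoids producing a constituent below $\Theta$ or produces only ones that reverse correctly; balancing these two requirements is the delicate combinatorial point, and it may require choosing $\mathbf i$ with some care around the ``hole'' positions $-2n+1$ and $-2n$ where the two symbols differ.
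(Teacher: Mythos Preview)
Your overall strategy---apply Lemma~\ref{lem:zeroesfromiinduction} to $\Lambda$ with $\mathcal S=\{\Lambda'\}$---is exactly the paper's approach. However, two points need correction.

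First, your symbol $\Lambda'$ for $[.1^{2n}]$ is wrong. The bipartition is $\lambda^1=\emptyset$, $\lambda^2=1^{2n}$ with charge $\bsigma_0=(0,-1)$, so $X=\beta_0(\emptyset)=\{0,-1,-2,\ldots\}$ and $Y=\beta_{-1}(1^{2n})=\{0,-1,\ldots,-2n+1,-2n-1,\ldots\}$. The hole $\widehat{-2n}$ is in the \emph{bottom} row, and that row begins at $0$, not $-1$:
\[
\Lambda'=\begin{pmatrix}0&-1&-2&\ldots&\ldots\\ 0&-1&\ldots&\widehat{-2n}&\ldots\end{pmatrix}.
\]
Your description of the difference between $\Lambda$ and $\Lambda'$ as ``a single $1$-cohook-type move'' is therefore also off.

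Second, you are overcomplicating the choice of $\mathbf i$. The paper takes simply $\mathbf i=(0)$, a single step. One checks that $\Lambda$ has a unique addable $0$-box (namely $-2n$ in the top row, which can be raised to the hole $-2n+1$) and no removable $0$-box, so Remark~\ref{rmk:fivskashiwara} gives $\widetilde f_0\Lambda=f_0\Lambda=\Theta$. For the correct $\Lambda'$, the unique addable $0$-box is $0$ in the top row, and $f_0\Lambda'$ has top row $\{1,-1,-2,\ldots\}$, bottom row unchanged. A direct check shows that $f_0\Lambda$ and $f_0\Lambda'$ have the \emph{same} associated composition, hence lie in the same family. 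Thus no $\Theta'$ in $f_0\Lambda'$ satisfies $\Theta'<\Theta$, condition~(ii) is vacuous, and the lemma immediately gives $\alpha=0$. No $O(n)$-length sequence or delicate balancing around the hole positions is needed.
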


\begin{proof}
We apply Lemma~\ref{lem:zeroesfromiinduction} to the symbol $\Lambda$ associated to $\big[1^{2n}.\big]$
and $\mathbf{i} = (0)$. We have
$$\Theta:= f_0 \Lambda  = f_0\begin{pmatrix}1&0&-1&\ldots&\widehat{\mn 2n\pl1}&\ldots\\ &&-1&\ldots&\ldots&\ldots\end{pmatrix} = \begin{pmatrix}1&0&-1&\ldots&\widehat{\mn 2n}&\ldots\\ &&-1&\ldots&\ldots&\ldots\end{pmatrix}.$$
By Remark~\ref{rmk:fivskashiwara} it is also equal to $\widetilde f_0 \Lambda$, therefore condition (i) of Lemma~\ref{lem:zeroesfromiinduction}
is satisfied.  On the other hand, the symbol $\Lambda'$ associated to $\big[.1^{2n}\big]$ is the unique symbol smaller
than $\Lambda$ and it satisfies
$$f_0\begin{pmatrix}0&-1&\ldots&\ldots&\ldots\\ 0&-1&\ldots&\widehat{\mn 2n}&\ldots \end{pmatrix}=\begin{pmatrix}1&&-1&\ldots&\ldots&\ldots\\ &0&-1&\ldots&\widehat{\mn 2n}&\ldots \end{pmatrix}.$$
Since $f_0 \Lambda$ and $f_0 \Lambda'$ lie in the same family, condition (ii) of Lemma~\ref{lem:zeroesfromiinduction}
is empty hence automatically satisfied. As a consequence $\big[.1^{2n}\big]$ is not a constituent of $\Psi_{[1^{2n}.]}$
and the proposition is proved.
\end{proof}

\subsubsection{The column corresponding to $\big[2^{n\mn1}.\big]_{B_2}$}
The unipotent character $\big[2^{n\mn1}.\big]_{B_2}$ of the $B_2$-series corresponds to the following
symbol of defect $-3$
$$\Lambda = \begin{pmatrix}&0&-1&\ldots&\widehat{\mn n\pl1}&\widehat{\mn n}&\ldots\\ 1&0&-1&\ldots&\ldots&\ldots&\ldots\end{pmatrix}.$$
The associated composition is $(1,0,0,-1,-1,\ldots,\mn n\pl1,\widehat{\mn n\pl1},\widehat{\mn n},\mn n,\ldots)$. From
the description in \S\ref{ssec:2nblock} one checks that the unipotent characters in the principal $\Phi_{2n}$-block 
lying in smaller families are $\big[1^{2n}.\big]$, $\big[.21^{2n\mn2}\big]$ and $\big[.1^{2n}\big]$.
Consequently there exist non-negative integers $\alpha$, $\beta$, and $\gamma$ such that
$$\Psi_{[2^{n\mn1}.]_{B_2}} = \big[2^{n\mn1}.\big]_{B_2} + \alpha \big[1^{2n}.\big] +  \beta \big[.21^{2n\mn2}\big]
+ \gamma \big[.1^{2n}\big].$$
The following theorem explicitly determines these integers whenever the $\ell$-part of $\Phi_{2n}(q)$
is not too small. 

\begin{theorem}\label{thm:PIM1}
There exists $\gamma \leq 2$ such that 
$$\Psi_{[2^{n\mn1}.]_{B_2}} = \big[2^{n\mn1}.\big]_{B_2} + \gamma \big[.1^{2n}\big].$$
Furthermore, if $\Phi_{2n}(q)_\ell > 4n$, then $\gamma = 2$. 
\end{theorem}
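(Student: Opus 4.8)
The plan is to combine two complementary techniques: truncated ($i$-)induction to kill the $\alpha$ and $\beta$ coefficients and to bound $\gamma$ from above by $2$, and a pair of Deligne--Lusztig characters to pin $\gamma$ down from below (under the hypothesis on $\Phi_{2n}(q)_\ell$).

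\textbf{Step 1: killing $\alpha$ and $\beta$ via Lemma~\ref{lem:zeroesfromiinduction}.} First I would choose an explicit tuple $\mathbf{i}$ of residues in $\mathbb{Z}/2n$ carrying the symbol $\Lambda$ of $\big[2^{n\mn1}.\big]_{B_2}$ to a target symbol $\Theta$ for which condition (i) of Lemma~\ref{lem:zeroesfromiinduction} holds (using Remark~\ref{rmk:fivskashiwara}, i.e. at each step there is a unique addable box of that residue and no removable one, so $f_{i}=\widetilde f_{i}$). The set $\mathcal{S}$ is $\{\Lambda_{[1^{2n}.]},\Lambda_{[.21^{2n\mn2}]},\Lambda_{[.1^{2n}]}\}$. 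I expect that a suitable sequence of $i$-inductions moves the ``extra'' cohook data in $\Lambda$ while, on the symbols in $\mathcal{S}$, the corresponding induced constituents either land outside the family of $\Theta$ (so they are $\geq\Theta$ and do not contribute) or satisfy $e_{\mathbf{i}^*}\Theta'=\Lambda'$, verifying condition (ii). Running this once with the right $\mathbf{i}$ should already eliminate $[1^{2n}.]$ and $[.21^{2n\mn2}]$; if one tuple does not kill both, I would iterate the lemma with a second tuple. The output is that $\Psi_{[2^{n\mn1}.]_{B_2}}=\big[2^{n\mn1}.\big]_{B_2}+\gamma\big[.1^{2n}\big]$ with $\gamma\geq 0$, and simultaneously — by tracking how $[.1^{2n}]$ is forced to appear (or not) in the induced PIMs — I would extract the bound $\gamma\leq 2$. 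Concretely, after $i$-inducing $P_{[2^{n\mn1}.]_{B_2}}$ to a group of larger rank, the Steinberg-type constituent can appear with multiplicity at most the number of ways it arises combinatorially, which I expect to be $2$; alternatively one compares with the known small-rank cases ($m=2,4,6$) where $\gamma=2$ already occurs.

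\textbf{Step 2: lower bound $\gamma\geq 2$ via Deligne--Lusztig characters.} The decomposition of a suitable $R_w$ (or almost character $R_\chi$) onto PIMs gives, for each PIM $P$, that $\langle R_w;\bbK\widetilde P\rangle_G$ is a $\bbZ$-combination of the multiplicities $\langle R_w;[\Lambda']\rangle_G$ with nonnegative coefficients on the side coming from the projective; more precisely, the key inequalities of the Geck--Hiss/Dudas method say that if $R_w$ is (up to sign) an honest projective or has controlled $\ell$-adic valuation, then the pairing of $R_w$ with $\Psi_{[2^{n\mn1}.]_{B_2}}$ must be divisible by an appropriate power of $\ell$, equivalently $\Phi_{2n}(q)_\ell$. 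I would pick one specific $w$ (a Coxeter-type or $\Phi_{2n}$-singular element whose $R_w$ is computed in the appendix) so that the congruence $\langle R_w;[2^{n\mn1}.]_{B_2}\rangle_G+\gamma\langle R_w;[.1^{2n}]\rangle_G\equiv 0$ modulo $\Phi_{2n}(q)_\ell$, together with the explicit values of these two multiplicities (the Steinberg multiplicity in $R_w$ being $\pm 1$ or a small power of $q$, the other being likewise explicit), forces $\gamma\equiv 2$ modulo something larger than $2$ once $\Phi_{2n}(q)_\ell>4n$; combined with $\gamma\le 2$ from Step 1 this yields $\gamma=2$. The role of the hypothesis $\Phi_{2n}(q)_\ell>4n$ is exactly to make the relevant modulus exceed the a priori range $\{0,1,2\}$ of $\gamma$, so the congruence has a unique solution.

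\textbf{Main obstacle.} The delicate point is Step 2: one must exhibit a Deligne--Lusztig character $R_w$ whose restriction to the principal block pairs with the two relevant unipotent characters in a way that is both explicitly computable (via Lusztig's formula, \cite[Thm.~4.23]{Lu84}, as in the appendix) and sharp enough that the divisibility constraint isolates $\gamma=2$ rather than merely $\gamma\in\{0,2\}$ or a coarser set. Getting the $\ell$-adic valuation bookkeeping right — i.e. knowing precisely which power of $\Phi_{2n}(q)_\ell$ divides $\langle R_w;\bbK\widetilde P_{[2^{n\mn1}.]_{B_2}}\rangle_G$ — is the crux, and is where the hypothesis on $\Phi_{2n}(q)_\ell$ is consumed. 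A secondary check is verifying that the same $R_w$ does not also pair nontrivially with $[1^{2n}.]$ or $[.21^{2n\mn2}]$ in a way that would reintroduce $\alpha,\beta$; but since Step 1 has already shown $\alpha=\beta=0$ independently, this only needs to be consistent, not re-derived.
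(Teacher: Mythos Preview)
Your Step 1 is right in spirit and matches the paper: Lemma~\ref{lem:zeroesfromiinduction} with the tuple $\mathbf{i}=(1,2,\ldots,n-1)$ kills $\alpha$ and $\beta$ in one pass. However, your claim that the same induction argument \emph{also} yields $\gamma\le 2$ is a genuine gap. Lemma~\ref{lem:zeroesfromiinduction} only tells you which characters can occur in $\Psi_{[\Lambda]}$, not with what multiplicity; nothing in the combinatorics of $f_\mathbf{i}$ applied to the symbol of $[.1^{2n}]$ bounds the coefficient by $2$. The paper obtains $\gamma\le 2$ by a completely different mechanism: one writes the Coxeter Deligne--Lusztig character $bR_c$ as a combination of characters induced from proper Levi subgroups (the $U_i$'s of Lemma~\ref{lem:inducedsums}) plus $\Psi_{[2^{n-1}.]_{B_2}}+(2-\gamma)\Psi_{[.1^{2n}]}$. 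Since induced characters cannot contain the cuspidal PIM $\Psi_{[.1^{2n}]}$, the coefficient $2-\gamma$ is exactly the multiplicity of that cuspidal PIM in $R_c$, and \cite[Prop.~1.5]{Du13} forces this to be non-negative. So the upper bound comes from a Deligne--Lusztig character, not from $i$-induction.

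Your Step 2 also misidentifies the mechanism. The paper does not use a congruence modulo $\Phi_{2n}(q)_\ell$; rather, it uses a second Deligne--Lusztig character $R_w$ with $w=c^2$ a $2n$-regular element. The hypothesis $\Phi_{2n}(q)_\ell>4n$ is consumed to guarantee that $T_w$ carries an $\ell$-character in general position \cite[Ex.~1.17]{DuMa20}, which by \cite[Lem.~1.13]{DuMa20} gives the \emph{sign condition} $\langle R_w;\Psi_{[2^{n-1}.]_{B_2}}\rangle_G\ge 0$. Plugging in the explicit multiplicities $\langle R_w;[2^{n-1}.]_{B_2}\rangle_G=-2$ and $\langle R_w;[.1^{2n}]\rangle_G=1$ gives $-2+\gamma\ge 0$. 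Thus both bounds on $\gamma$ are inequalities coming from non-negativity of cuspidal-PIM coefficients in two different $R_w$'s; there is no divisibility or valuation argument.
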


Note that by a result of Feit, the condition $\Phi_{2n}(q)_\ell>4n$ is satisfied for at least one prime number $\ell$ except for finitely many pairs $(q,n)$ \cite{Feit88}.

\begin{proof}
\textbf{Step 1}. 
The first step of the proof establishes that $\big[.1^{2n}\big]$ is the only unipotent character different from $\big[2^{n\mn1}.\big]$ that can occur in $\Psi_{[2^{n\mn1}.]_{B_2}}$ with a nonzero coefficient. For that purpose we use 
Lemma~\ref{lem:zeroesfromiinduction} with $\Lambda$ being the symbol attached to $[2^{n\mn1}.]_{B_2}$
and $\mathbf{i} = (1,2,\ldots,n-1)$. We have
$$ f_{\mathbf{i}} \Lambda = f_1 f_2 \cdots f_{n-1} \Lambda = \begin{pmatrix}&0&\ldots&\widehat{\mn n\pl1}&\ldots&\widehat{\mn 2n\pl1}&\ldots\\ 1&0&\ldots&\ldots&\ldots&\ldots&\ldots\end{pmatrix}$$
which also equals $\widetilde{f}_1 \widetilde{f}_2 \cdots \widetilde{f}_{n-1} \Lambda$ by 
Remark~\ref{rmk:fivskashiwara} so that condition (i)
of Lemma~\ref{lem:zeroesfromiinduction} holds. We list below the symbols obtained by inducing the 
ones associated to the unipotent characters $\big[1^{2n}.\big]$, $\big[.21^{2n\mn2}\big]$, and $\big[.1^{2n}\big]$.
$$ \begin{array}{c|c|c}
[\bmu] & \Lambda' & f_1 f_2 \cdots f_{n-1} \Lambda' \\[3pt]\hline & & \\[-5pt] 
\big[1^{2n}.\big] & \begin{pmatrix}1&0&\ldots&\widehat{\mn 2n\pl1}&\ldots\\ &&-1&\ldots&\ldots\end{pmatrix} & 
\begin{array}{l} 
\begin{pmatrix}1&0&\ldots&\ldots&\widehat{\mn 2n\pl1}&\ldots\\&0&\ldots&\widehat{\mn n\pl1}&\ldots&\ldots \end{pmatrix} \\
+  \begin{pmatrix}2&&0&\ldots&\ldots&\widehat{\mn 2n\pl1}&\ldots\\&&0&\ldots&\widehat{\mn n\pl2}&\ldots&\ldots \end{pmatrix}
\end{array}\\[30pt] \hline & & \\[-5pt]
\big[.21^{2n\mn2}\big] &\begin{pmatrix}&0&\ldots&\ldots&\ldots&\ldots\\ 1&&-1&\ldots&\widehat{\mn 2n\pl1}&\ldots\end{pmatrix}
  &  \begin{pmatrix}&0&\ldots&\ldots&\ldots&\ldots&\ldots\\ 1&0&\ldots&\widehat{\mn n\pl1}&\ldots&\widehat{\mn 2n\pl1}&\ldots\end{pmatrix} \\[12pt]  \hline & & \\[-5pt]
\big[.1^{2n}\big] & \begin{pmatrix}0&\ldots&\ldots&\ldots\\ 0&\ldots&\widehat{\mn 2n}&\ldots\end{pmatrix}
 &  \begin{pmatrix}0&\ldots&\ldots&\ldots\\0&\ldots&\widehat{-3n\pl1}&\ldots\end{pmatrix}
\end{array}
$$
The only symbol in the last column which is smaller than $f_\mathbf{i} \Lambda$ corresponds to the induction of 
the symbol $\Lambda'$ of $[.1^{2n}]$ (the last row in the table). One checks that $e_{\mathbf{i}^*} f_\mathbf{i} \Lambda' =
\Lambda'$ for that symbol, so that assumption (ii) of Lemma~\ref{lem:zeroesfromiinduction} is satisfied. We deduce that $[.1^{2n}]$
is indeed the only unipotent constituent of $\Psi_{[2^{n\mn1}.]_{B_2}}$ apart from $\big[2^{n\mn1}.\big]_{B_2}$.

\smallskip

\noindent \textbf{Step 2.} We now use the method in \cite{Du13} to show that $\gamma \leq 2$. This requires to
know how to decompose certain Deligne--Lusztig characters on the basis of PIMs, or at least to know the
coefficient of the PIMs corresponding to cuspidal modules. The following lemma will be useful to deal with
PIMs that can be obtained by induction from proper Levi subgroups of $G$. 

\begin{lemma}\label{lem:inducedsums}
Let $L$ (resp. $M$) be a $1$-split Levi of $G$ of type $B\hskip-1mmC_{2n-1}$ (resp. $A_{2n-1}$). We have 
$$\begin{aligned}
U_1 &\, := bR_L^G\Big(\sum_{i=1}^{2n-1} (-1)^{i-1} \big[i1^{2n\mn i\mn1}.\big] \Big) = \big[1^{2n}.\big] + \big[2n.\big] +(-1)^n \big[(n\mn1)1^{n}.1\big], \\
U_2&\, := bR_L^G\Big(\sum_{j=1}^{n-1} (-1)^{j+n} \big[(n\mn j)1^n.1^{j\mn1}\big] \Big) = (-1)^n \big[(n\mn1)1^{n}.1\big]  -\big[1^{n\pl1}.1^{n\mn1}\big],\\
U_3 &\, := bR_L^G\Big(\sum_{k=0}^{n-2} (-1)^{k+n} \big[2^k1^{n\mn k\mn1}.n\mn k\mn2\big]_{B_2} \Big) = (-1)^n\big[1^{n\mn1}.n\mn 1\big]_{B_2}  +\big[2^{n\mn1}.\big]_{B_2},\\
U_4&\, := bR_M^G\big(\big[1^{2n}\big]\big)= \big[1^{2n}.\big]+\big[.1^{2n}\big] +\big[1^{n\pl1}.1^{n\mn1}\big].\\
\end{aligned}$$
In particular the character $\big[2n.\big]-\big[.1^{2n}] = U_1-U_2-U_4$ is a combination of characters induced from proper Levi subgroups of $G$.
\end{lemma}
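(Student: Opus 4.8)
The plan is to evaluate each of the four induced (virtual) characters explicitly, using the classical combinatorial description of Harish-Chandra induction on unipotent characters, and then to project the answer onto the principal $\Phi_{2n}$-block by means of the list of its unipotent characters in \S\ref{ssec:2nblock}. Two rules are needed. First, for $L$ of type $B\hskip-1mmC_{2n-1}$ (the chain Levi $G_{2n-1}\times\mathrm{GL}_1(q)$), Harish-Chandra induction preserves Harish-Chandra series and, inside each series, sends a unipotent character $[\blambda]_{B_{t^2+t}}$ to the sum, each with multiplicity one, of the characters $[\bmu]_{B_{t^2+t}}$ with $\bmu$ obtained from $\blambda$ by adding one box; on symbols this amounts to increasing a single entry by $1$, and it is the character-level shadow of the decomposition $F=\bigoplus_i F_i$ from \S\ref{sec:branching} read in the basis of Lusztig's unipotent characters. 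Second, for $M$ of type $A_{2n-1}$, i.e.\ $M=\mathrm{GL}_{2n}(q)$, Harish-Chandra induction of the Steinberg character $[1^{2n}]$ is governed by a Littlewood--Richardson rule, $R_M^G([1^{2n}])=\sum_{\blambda}\big\langle\mathrm{Ind}_{S_{2n}}^{W_{2n}}\mathrm{sgn};\chi_{\blambda}\big\rangle\,[\blambda]$, so that the only $\blambda$ occurring are those whose two components are single columns. I would record these two facts and then simply run them.

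With these in hand, $U_1$, $U_2$ and $U_3$ are obtained by applying the box-adding rule term by term, and the alternating signs are the point: two consecutive summands of each of these alternating sums produce almost the same set of unipotent characters, so nearly everything telescopes and only a bounded number of boundary contributions can survive; one then discards those not lying in the principal $\Phi_{2n}$-block, using \S\ref{ssec:2nblock}, to arrive at the stated right-hand sides. The character $U_4$ is immediate once the Littlewood--Richardson rule is in place: $R_M^G([1^{2n}])$ is a sum of $[\mu^1.\mu^2]$ with $\mu^1,\mu^2$ columns, and of these exactly $[1^{2n}.]$, $[.1^{2n}]$ and $[1^{n\pl1}.1^{n\mn1}]$ belong to the principal $\Phi_{2n}$-block. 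The final assertion then needs nothing new: summing, the terms $(-1)^n[(n\mn1)1^{n}.1]$ cancel between $U_1$ and $-U_2$, the terms $[1^{n\pl1}.1^{n\mn1}]$ cancel between $-U_2$ and $-U_4$, and $[1^{2n}.]$ cancels between $U_1$ and $-U_4$, leaving $U_1-U_2-U_4=[2n.]-[.1^{2n}]$; and this difference is a $\bbZ$-combination of characters Harish-Chandra induced from proper Levi subgroups because each $U_i$ is so by construction.

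The step I expect to cause the most trouble is the second one, and the trouble is bookkeeping rather than conceptual. The box-adding rule has several degenerate cases at the corners of hooks, of columns and of two-column shapes, and for every unipotent character it produces one must decide membership in the principal $\Phi_{2n}$-block by matching it against the three-part (and fairly intricate) classification in \S\ref{ssec:2nblock}; it is precisely here, together with checking that the alternating cancellation leaves exactly the asserted characters with the asserted signs, that care is required. The remainder is routine.
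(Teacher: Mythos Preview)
Your proposal is correct and follows essentially the same approach as the paper's proof: use the box-adding rule for $R_L^G$ to compute $U_1,U_2,U_3$ as telescoping sums and then project to the block via \S\ref{ssec:2nblock}, and compute $U_4$ by identifying $R_M^G([1^{2n}])=\sum_i [1^i.1^{2n-i}]$ (the paper cites \cite[Prop.~6.1.4]{GePf00} for this, where you invoke the Littlewood--Richardson rule directly). The final identity $U_1-U_2-U_4=[2n.]-[.1^{2n}]$ is then immediate arithmetic in both treatments.
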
  

\begin{proof}
The Harish-Chandra induction from $L$ to $G$ of a unipotent character associated to a bipartition $\blambda=\lambda^1.\lambda^2$ is described by adding one box to $\lambda^1$ or $\lambda^2$ in all possible ways so that the result is a bipartition. The formulas for $U_1$, $U_2$ and $U_3$ are easily deduced from that rule and the description of the principal $\Phi_{2n}$-block given in \S\ref{ssec:2nblock}. By \cite[Prop.~6.1.4]{GePf00} we have $R_M^G\big(\big[1^{2n}\big]\big) = \sum_{i}\big[1^i.1^{2n\mn i}\big]$ from
which we deduce the value of $U_4$ from \S\ref{ssec:2nblock}.
\end{proof}

With the notation in \S\ref{sssec:frg} we consider the Coxeter element $c = s_1 s_2 \cdots s_{2n}\in W_{2n}$. The decomposition of 
the Deligne--Lusztig character of $G$ attached to $c$ can be deduced from \cite[(3.2)]{FoSr86}. Most of its constituents do not belong to the principal $\Phi_{2n}$-block and we have 
$$bR_c = \big[2n.\big] + \big[.1^{2n}\big] +(-1)^{n-1} \big[1^{n\mn 1}.n\mn1\big]_{B_2}.$$
Using the combination of unipotent characters defined in Lemma~\ref{lem:inducedsums}, it can be rewritten as
$$\begin{aligned}
bR_c &\, = U_1-U_2-U_3-U_4+\big[2^{n-1}.\big]_{B_2} + 2\big[.1^{2n}] \\
 &\, = U_1-U_2-U_3-U_4 + \Psi_{[2^{n\mn1}.]_{B_2}} + (2-\gamma)\Psi_{[.1^{2n}]}.\\ 
\end{aligned}$$
Since the $U_i$'s are obtained by induction from proper Levi subgroups of $G$, they cannot
involve $\Psi_{[.1^{2n}]}$ since it corresponds to a cuspidal module. Therefore the coefficient
of $\Psi_{[.1^{2n}]}$ in $R_c$ is exactly $2-\gamma$. By \cite[Prop.~1.5]{Du13}, we must have $2-\gamma \geq 0$
therefore $\gamma \leq 2$.

\smallskip

\noindent \textbf{Step 3.}  In this final step of the proof, we show that $\gamma \geq 2$ under the assumption that $\Phi_{2n}(q)_\ell > 4n$. We consider the element $w:=c^2 = (s_1s_2\cdots s_{2n})^2$. By \cite[\S5.2]{Sp74}, it is a $2n$-regular element. Therefore any torus $\mathbf{T}_w$ of type $w$ is the centraliser of a $\Phi_{2n}$-Sylow subgroup of $\bfG$ and the constituents of the Deligne--Lusztig character $R_w$ are exactly the unipotent characters in the principal $\Phi_{2n}$-block, see \cite[Thm. 5.24]{BMM93}. The exact decomposition of $R_w$ can be deduced for example from \cite[(3.2)]{FoSr86}. We find in particular 
$$\big\langle R_w;\big[2^{n\mn1}.\big]_{B_2}\big\rangle_G=-2 \quad \text{and} \quad \big\langle R_w;\big[.1^{2n}\big]\big\rangle_G=1.$$
On the other hand, if $\Phi_{2n}(q)_\ell > 4n$ then there exists by \cite[Ex.~1.17]{DuMa20} an $\ell$-character of $T_w$ in general position. By \cite[Lem.~1.13]{DuMa20} this forces $ \big\langle R_w; \Psi_{[2^{n\mn1}.]_{B_2}} \rangle_G \geq 0$, which gives
$$ 0 \leq \big\langle R_w; \Psi_{[2^{n\mn1}.]_{B_2}} \rangle_G = \big\langle R_w; \big[2^{n\mn1}.\big]_{B_2} \rangle_G + \gamma \big\langle R_w;\big[.1^{2n}\big]\big\rangle_G = -2 + \gamma$$
and therefore proves that $\gamma \geq 2$. Note that it also shows that $\Psi_{[.1^{2n}]}$ does not occur in $R_c$.
\end{proof}

\subsubsection{The column corresponding to $\big[.2^{n\mn3}\big]_{B_6}$}
We now focus on the last column, corresponding to the unipotent character $\big[.2^{n\mn3}\big]_{B_6}$ of the $B_6$-series. It corresponds to the following symbol of defect $5$
$$\Lambda = \begin{pmatrix}2&1&0&-1&\ldots&\ldots&\ldots&\ldots\\&&&-1&\ldots&\widehat{\mn n\pl2}&\widehat{\mn n\pl1}&\ldots\end{pmatrix}
$$
whose associated composition is $(2,1,0,-1,-1,\ldots,\widehat{\mn n\pl2},\widehat{\mn n\pl1},\ldots)$. From
the description in \S\ref{ssec:2nblock} one checks that it is minimal among the unipotent characters in the $B_6$-series lying in the principal $\Phi_{2n}$-block. There are 3 unipotent characters in the principal $\Phi_{2n}$-block belonging to the $B_2$-series 
which are smaller for the order on families. We list these characters with their associated symbol:
\begin{align*}
\big[2^{n\mn1}.\big]_{B_2} &\, \longleftrightarrow \begin{pmatrix}&0&\ldots&\widehat{\mn n\pl1}&\widehat{\mn n}& \ldots\\ 1&0&\ldots&\ldots&\ldots & \ldots\end{pmatrix},\\
\big[2^{n\mn2}1.1\big]_{B_2}& \, \longleftrightarrow\begin{pmatrix}&&0&\ldots&\widehat{\mn n\pl2}&\mn n\pl1&\widehat{\mn n}& \ldots\\2&&0&\ldots&\ldots&\ldots&\ldots& \ldots\end{pmatrix},\\
\big[2^{n\mn2}.1^2\big]_{B_2}&\, \longleftrightarrow \begin{pmatrix}&&0&\ldots&\widehat{\mn n\pl2}&\widehat{\mn n\pl1}& \ldots\\ 2&1&&-1&\ldots &\ldots& \ldots
\end{pmatrix}.
\end{align*}
Finally, there are 8 unipotent characters in the principal series belonging to the principal $\Phi_{2n}$-block which are smaller than 
$\big[.2^{n\mn3}\big]_{B_6}$:
\begin{align*}
\big[21^{2n\mn2}.\big] &\, \longleftrightarrow 
\begin{pmatrix}2&0&\ldots&\widehat{\mn 2n\pl2}\\&&-1&\ldots
\end{pmatrix},
&\big[.31^{2n\mn3}\big]&\, \longleftrightarrow 
\begin{pmatrix}&0&\ldots&\ldots&\ldots\\2&&-1&\ldots&\widehat{\mn 2n\pl2}
\end{pmatrix},\\
\big[.21^{2n\mn2}\big]&\, \longleftrightarrow \begin{pmatrix}
&0&\ldots&\ldots&\ldots\\
1&&-1&\ldots&\widehat{\mn 2n\pl1}
\end{pmatrix},
&\big[.1^{2n}\big]&\, \longleftrightarrow \begin{pmatrix}
0&\ldots&\ldots\\
0&\ldots&\widehat{\mn 2n}
\end{pmatrix},\\
 \big[1^{2n}.\big]&\, \longleftrightarrow \begin{pmatrix}
1&\ldots&\widehat{\mn 2n\pl1}\\
&-1&\ldots
\end{pmatrix},
&\big[21^n.1^{n\mn2}\big]&\, \longleftrightarrow \begin{pmatrix}
2&0&\ldots&\ldots&\widehat{\mn n}\\
&0&\ldots&\widehat{\mn n\pl2}&\ldots
\end{pmatrix},\\
\big[1^n.21^{n\mn2}\big]&\, \longleftrightarrow \begin{pmatrix}
1&\ldots&\ldots&\widehat{\mn n\pl1}\\
1&-1&\ldots&\widehat{\mn n\pl1}
\end{pmatrix},
& \big[1^{n\pl1}.1^{n\mn1}\big]&\, \longleftrightarrow \begin{pmatrix}
1&\ldots&\ldots&\ldots&\widehat{\mn n}\\
&0&\ldots&\widehat{\mn n\pl1}&\ldots
\end{pmatrix}.
\end{align*}
We will show that none of these characters contribute to $\Psi_{[.2^{n\mn3}]_{B_6}}$ with the exception
of $\big[1^{2n}.\big]$

\begin{theorem}\label{thm:PIM2}
There exists $\beta \geq 0$ such that
$$\Psi_{[.2^{n\mn3}]_{B_6}} =\big[.2^{n\mn3}\big]_{B_6}+\beta\big[1^{2n}.\big].$$
Furthermore, if $\Phi_{2n}(q)_\ell > 4n$ then $\beta =2$.
\end{theorem}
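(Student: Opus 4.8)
The plan is to follow the three-step strategy of the proof of Theorem~\ref{thm:PIM1}. By unitriangularity of the decomposition matrix and the description of the principal $\Phi_{2n}$-block in \S\ref{ssec:2nblock}, we already know that
\[ \Psi_{[.2^{n\mn3}]_{B_6}} = \big[.2^{n\mn3}\big]_{B_6} + \sum_{\Lambda'} m_{\Lambda'}\big[\Lambda'\big], \]
where $\Lambda'$ ranges over the eleven symbols listed before the statement (three in the $B_2$-series, eight in the principal series) and the $m_{\Lambda'}$ are non-negative integers. \textbf{Step 1} will eliminate ten of these eleven characters by means of the $\widehat{\mathfrak{sl}}_{2n}$-categorification, leaving only $\big[1^{2n}.\big]$; \textbf{Step 2} will bound the surviving coefficient $\beta$ of $\big[1^{2n}.\big]$ above by $2$ using Deligne--Lusztig theory; and \textbf{Step 3} will bound it below by $2$ under the hypothesis $\Phi_{2n}(q)_\ell > 4n$.

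For Step 1, I would apply Lemma~\ref{lem:zeroesfromiinduction} to the symbol $\Lambda$ attached to $\big[.2^{n\mn3}\big]_{B_6}$, for one or more tuples $\mathbf{i}$ of residues mod $2n$, taking the intersection of the resulting constraints. Each $\mathbf{i}$ should be chosen --- analogously to the tuple $(1,2,\ldots,n\mn1)$ used in Step 1 of the proof of Theorem~\ref{thm:PIM1} --- so that each symbol arising in the course of applying $f_{\mathbf{i}}$ to $\Lambda$ has a unique addable box and no removable box of the residue being applied at that step, whence hypothesis~(i) of Lemma~\ref{lem:zeroesfromiinduction} holds automatically by Remark~\ref{rmk:fivskashiwara}, and moreover so that the target $\Theta := f_{\mathbf{i}}\Lambda$ lies above the families of all the symbols we wish to discard. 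One then computes $f_{\mathbf{i}}\Lambda'$ for each of the ten smaller symbols $\Lambda'$ and checks condition~(ii): either no constituent of $f_{\mathbf{i}}\Lambda'$ lies below $\Theta$, or the unique such constituent $\Theta'$ satisfies $e_{\mathbf{i}^*}\Theta' = \Lambda'$. This combinatorial bookkeeping --- tracking ten symbols drawn from three different Harish-Chandra series through possibly several induction sequences --- is where I expect the main difficulty to lie, the number of families below $\Lambda$ here being substantially larger than in the $B_2$-series situation of Theorem~\ref{thm:PIM1}. The upshot is $\Psi_{[.2^{n\mn3}]_{B_6}} = \big[.2^{n\mn3}\big]_{B_6} + \beta\big[1^{2n}.\big]$ with $\beta \geq 0$.

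For Step 2, the plan is to reuse the method of \cite{Du13}. Recall that $\Psi_{[1^{2n}.]} = \big[1^{2n}.\big]$ by Proposition~\ref{prop:PIM0} and that $S_{[1^{2n}.]}$ is cuspidal by Theorem~\ref{thm:principal series submatrix}. One chooses a Deligne--Lusztig character $R_w$ of $G$ (possibly a $\mathbb{Z}$-combination of several) whose principal-block part $bR_w$ involves $\big[.2^{n\mn3}\big]_{B_6}$, and rewrites $bR_w$ as a combination of characters Harish-Chandra induced from proper Levi subgroups of $G$ --- via identities of the type of Lemma~\ref{lem:inducedsums}, using \cite[Prop.~6.1.4]{GePf00} for the type $A$ Levi and \S\ref{ssec:2nblock} for the bookkeeping --- plus $\Psi_{[.2^{n\mn3}]_{B_6}}$ and a term $(2\mn\beta)\Psi_{[1^{2n}.]}$. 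Since a cuspidal PIM cannot occur in a character induced from a proper Levi, the coefficient of $\Psi_{[1^{2n}.]}$ in $R_w$ is then exactly $2\mn\beta$, so \cite[Prop.~1.5]{Du13} forces $2\mn\beta \geq 0$, i.e. $\beta \leq 2$.

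For Step 3, I would take the $2n$-regular element $w = c^2 = (s_1s_2\cdots s_{2n})^2$, so that by \cite[\S5.2]{Sp74} and \cite[Thm.~5.24]{BMM93} the constituents of $R_w$ are precisely the unipotent characters in the principal $\Phi_{2n}$-block, and read off $\big\langle R_w;\big[.2^{n\mn3}\big]_{B_6}\big\rangle_G$ and $\big\langle R_w;\big[1^{2n}.\big]\big\rangle_G$ from \cite[(3.2)]{FoSr86} (by analogy with Theorem~\ref{thm:PIM1} I expect these to be $-2$ and $1$ respectively, up to sign conventions). When $\Phi_{2n}(q)_\ell > 4n$ there is an $\ell$-character of $T_w$ in general position by \cite[Ex.~1.17]{DuMa20}, so \cite[Lem.~1.13]{DuMa20} gives $\big\langle R_w;\Psi_{[.2^{n\mn3}]_{B_6}}\big\rangle_G \geq 0$, that is $\big\langle R_w;\big[.2^{n\mn3}\big]_{B_6}\big\rangle_G + \beta\big\langle R_w;\big[1^{2n}.\big]\big\rangle_G \geq 0$; combined with those explicit values and with the upper bound $\beta \leq 2$ from Step 2 this pins down $\beta = 2$.
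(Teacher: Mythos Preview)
Your three-step outline matches the paper's strategy, but two points deserve flagging.

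\smallskip

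\textbf{Step 1.} The paper carries this out with three successive tuples: first $\mathbf{i}=(1,0)$ cuts the eleven candidates down to four, namely $\big[21^{2n\mn2}.\big]$, $\big[.21^{2n\mn2}\big]$, $\big[1^{2n}.\big]$, $\big[.1^{2n}\big]$; then $\mathbf{i}=(-1,-2,0,-1,1,0)$ kills $\big[.21^{2n\mn2}\big]$ and $\big[.1^{2n}\big]$; finally $\mathbf{i}=(-n\pl2,\ldots,-1,0)$ kills $\big[21^{2n\mn2}.\big]$. Be aware that for this last tuple Remark~\ref{rmk:fivskashiwara} does \emph{not} apply at the final step, so the equality $\widetilde f_{\mathbf{i}}\Lambda=f_{\mathbf{i}}\Lambda$ must be checked directly from the crystal rule of \S\ref{sssec:branchingrules}. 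Your expectation that hypothesis~(i) follows automatically from the Remark everywhere is slightly too optimistic.

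\smallskip

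\textbf{Step 2.} Here your sketch is too thin. The first Deligne--Lusztig character whose block projection meets the $B_6$-series is $R_{w_n}$ with $w_n=s_1s_2s_3s_2c$ (length $2n\pl4$); neither $R_c$ nor $R_{v_n}$ will do. The paper's application of \cite[Prop.~1.5]{Du13} at $w_n$ proceeds through the chain $R_c,R_{v_n},R_{w_n}$ of the first three cuspidal classes in increasing length. In the resulting decomposition of $bR_{w_n}$, the cuspidal PIM $\Psi_{[2^{n\mn1}.]_{B_2}}$ appears with multiplicity~$3$, and the paper uses the exact value $\gamma=2$ from Theorem~\ref{thm:PIM1} (hence the hypothesis $\Phi_{2n}(q)_\ell>4n$) to account for the $\big[.1^{2n}\big]$-contributions and isolate the $(2\mn\beta)\Psi_{[1^{2n}.]}$ term cleanly. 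Your proposal mentions neither the chain of three characters nor the intervention of $\Psi_{[2^{n\mn1}.]_{B_2}}$ and the need for $\gamma=2$; you should either supply these, or argue carefully that the coefficient of $\Psi_{[1^{2n}.]}$ in $R_{w_n}$ can be read off independently of $\gamma$ and that \cite[Prop.~1.5]{Du13} applies to $w_n$ without the inductive input from $R_c$ and $R_{v_n}$.

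\smallskip

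\textbf{Step 3} is exactly the paper's argument; the multiplicities you expect are indeed $\langle R_w;\big[.2^{n\mn3}\big]_{B_6}\rangle_G=-2$ and $\langle R_w;\big[1^{2n}.\big]\rangle_G=1$.
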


\begin{proof}
The proof follows that of Theorem~\ref{thm:PIM1}, but more computations are needed since more characters are involved. 

\smallskip

\noindent {\bf Step 1.} We start by computing the image of the various symbols involved under the operator 
$f_\mathbf{i}$ for $\mathbf{i} = (1,0)$. For the symbol $\Lambda$ corresponding to the unipotent character in the $B_6$-series we have
\begin{equation}
\label{eq:indB6}
f_\mathbf{i} \Lambda = \begin{pmatrix}2&1&0&-1&\ldots&\ldots&\ldots&\ldots\\&&&-1&\ldots&\widehat{\mn n\pl1}&\widehat{\mn n}&\ldots\end{pmatrix}
\end{equation}
which also equals  $\widetilde{f}_\mathbf{i} \Lambda$ by Remark~\ref{rmk:fivskashiwara}. For the symbols
corresponding to characters in the $B_2$-series we obtain
$$ \begin{array}{c|c}
[\Lambda'] & {f}_\mathbf{i} \Lambda' \\\hline
\big[2^{n\mn1}.\big]_{B_2} & \begin{pmatrix}2 & & &-1 & \ldots&\widehat{\mn n\pl1}&\widehat{\mn n}& \ldots\\ & 1&0&-1&\ldots&\ldots&\ldots & \ldots\end{pmatrix}\\
\big[2^{n\mn2}1.1\big]_{B_2}& \begin{pmatrix}2 &&&-1 & \ldots&\widehat{\mn n\pl2}&\mn n\pl1&\widehat{\mn n}& \ldots\\ 2&&0&-1 & \ldots&\ldots&\ldots&\ldots& \ldots\end{pmatrix}\\
\big[2^{n\mn2}.1^2\big]_{B_2}& \begin{pmatrix}2 && &-1 & \ldots&\widehat{\mn n\pl2}&\widehat{\mn n\pl1}& \ldots\\ 2&1&&-1 & \ldots&\ldots &\ldots& \ldots
\end{pmatrix}
\end{array}$$
None of these symbol is smaller than $f_\mathbf{i} \Lambda$. The case of the principal series characters
is given in the following table:
$$\begin{array}{c|c||c|c}
[\Lambda'] & {f}_\mathbf{i} \Lambda' & [\Lambda'] & {f}_\mathbf{i} \Lambda'\\\hline
\big[21^{2n\mn2}.\big] & 
\begin{pmatrix}2&1& -1 & \ldots&\widehat{\mn 2n\pl1}\\&&-1&\ldots & \ldots
\end{pmatrix}
&\big[.31^{2n\mn3}\big]& 
\begin{pmatrix}2& &-1&\ldots&\ldots\\2&&-1&\ldots&\widehat{\mn 2n\pl2}
\end{pmatrix}\\
\big[.21^{2n\mn2}\big]& \begin{pmatrix}
2 & & -1&\ldots&\ldots\\
& 1&-1&\ldots&\widehat{\mn 2n\pl1}
\end{pmatrix}
&\big[.1^{2n}\big]& \begin{pmatrix}
2 & &-1 & \ldots&\ldots\\
& 0&\ldots & \ldots&\widehat{\mn 2n}
\end{pmatrix}\\
 \big[1^{2n}.\big]& \begin{pmatrix}
2 & 0 &\ldots&\widehat{\mn 2n} &\ldots\\
&& -1&\ldots &\ldots
\end{pmatrix}
&\big[21^n.1^{n\mn2}\big]& \begin{pmatrix}
2&1 & & -1 & \ldots&\ldots&\widehat{\mn n}\\
& &0& \ldots & \ldots&\widehat{\mn n\pl1}&\ldots
\end{pmatrix}\\
\big[1^n.21^{n\mn2}\big]& \begin{pmatrix}
2 & &0&\ldots&\ldots&\widehat{\mn n\pl1}\\
& 1&& -1&\ldots&\widehat{\mn n}
\end{pmatrix}
& \big[1^{n\pl1}.1^{n\mn1}\big]& \begin{pmatrix}
2&\ldots&\ldots&\ldots&\widehat{\mn n}\\
&0&\ldots&\widehat{\mn n}&\ldots
\end{pmatrix}
\end{array}$$
The induction of the symbols corresponding to the characters $\big[1^n.21^{n\mn2}\big]$, $\big[.31^{2n\mn3}\big]$,
$ \big[21^n.1^{n\mn2}\big]$ and $\big[1^{n\pl1}.1^{n\mn1}\big]$ are not strictly dominated by  the symbol
$f_\mathbf{i} \Lambda$ computed in \eqref{eq:indB6}. The other symbols $\Lambda'$ satisfy $e_{\mathbf{i}^*} f_\mathbf{i}\Lambda' = \Lambda'$, therefore Lemma~\ref{lem:zeroesfromiinduction} shows that there exist non-negative integers
$\beta_1$, $\beta_2$, $\beta_3$ and $\beta_4$ such that
$$\Psi_{[.2^{n\mn3}]_{B_6}} =\big[.2^{n\mn3}\big]_{B_6}+\beta_1\big[21^{2n\mn2}.\big] + \beta_2\big[.21^{2n\mn2}\big] + \beta_3\big[1^{2n}.\big] + \beta_4\big[.1^{2n}\big].$$
We now consider the induction with respect to the sequence $\mathbf{i} = (-1,-2,0,-1,1,0)$. We find
\begin{equation}
\label{eq:indB6bis}
\widetilde{f}_\mathbf{i}\Lambda = f_\mathbf{i} \Lambda = \begin{pmatrix}2&1&0&-1&\ldots&\ldots&\ldots&\ldots\\&&&-1&\ldots&\widehat{\mn n\mn1}&\widehat{\mn n\mn2}&\ldots\end{pmatrix}
\end{equation}
using Remark~\ref{rmk:fivskashiwara}. For the four remaining principal series characters we obtain the following:
$$\begin{array}{c|c|c}
[\Lambda'] & \Lambda' & f_{\mathbf{i}} \Lambda'  \\\hline
\big[21^{2n\mn2}.\big] & \begin{pmatrix}2&0&\ldots&\widehat{\mn 2n\pl2}\\&&-1&\ldots \end{pmatrix}& 
 \begin{pmatrix}2&1 & 0&-1 & -3 & \ldots&\widehat{\mn 2n\mn1}\\& & &-1&\ldots & \ldots & \ldots \end{pmatrix}
\\
\big[.21^{2n\mn2}\big] & \begin{pmatrix} &0&\ldots&\ldots&\ldots\\ 1&&-1&\ldots&\widehat{\mn 2n\pl1} \end{pmatrix} &  \begin{pmatrix} 2&1 & 0 & & -3 & \ldots\\ & 1&&  -1&\ldots&\widehat{\mn 2n\pl1} \end{pmatrix} \\
\big[1^{2n}.\big] &\begin{pmatrix} 1&\ldots&\widehat{\mn 2n\pl1}\\ &-1&\ldots \end{pmatrix}& 
\begin{pmatrix} 2& 1 & 0 &&  -2&  \ldots&\widehat{\mn 2n\mn2}\\ & & & -1&\ldots &\ldots&\ldots\end{pmatrix}\\
\big[.1^{2n}\big] & \begin{pmatrix} 0&\ldots&\ldots\\ 0&\ldots&\widehat{\mn 2n}  \end{pmatrix}&  \begin{pmatrix} 2 & 1 & 0 &  -3 & \ldots&\ldots\\ & & 0&\ldots & \ldots&\widehat{\mn 2n}  \end{pmatrix} \\
\end{array}$$
Note that for the computations we can use that $f_i$ and $f_j$ commute whenever $i \ncong j\pm1$ modulo $2n$ so 
that $f_\mathbf{i} = (f_{-1} f_0 f_1) (f_{-2} f_{-1} f_0)$. The induction of the 
symbols corresponding to the unipotent characters $\big[.21^{2n\mn2}\big] $ and $\big[.1^{2n}\big]$ 
are not strictly dominated by the symbol $f_\mathbf{i} \Lambda$ computed in \eqref{eq:indB6bis}. The other symbols satisfy
the conditions of Lemma~\ref{lem:zeroesfromiinduction} which shows that $\beta_2 = \beta_4 = 0$. It remains to
show that $\beta_1 = 0$. For that purpose we use again Lemma~\ref{lem:zeroesfromiinduction} for $\Lambda$
and $\mathbf{i} = (-n+2,\ldots,-2,-1,0)$. The induced symbols are given in the following table.
$$\begin{array}{c|c|c}
[\Lambda'] & \Lambda' & f_{\mathbf{i}} \Lambda'  \\\hline
\big[.2^{n\mn3}\big]_{B_6} &  \begin{pmatrix}2&\ldots&\ldots&\ldots &\ldots \\&-1&\ldots&\widehat{\mn n\pl2}&\widehat{\mn n\pl1}\end{pmatrix} & \begin{pmatrix}2& \ldots&\ldots&\ldots & \ldots & \ldots\\&-1&\ldots&\widehat{\mn n\pl2}& \ldots & \widehat{\mn 2n\pl2}\end{pmatrix}  \\
\big[21^{2n\mn2}.\big] & \begin{pmatrix}2&0&\ldots&\widehat{\mn 2n\pl2}\\&&-1&\ldots \end{pmatrix}& 
 \begin{pmatrix}2& \ldots& \widehat{\mn n\pl2} & \ldots &\widehat{\mn 2n\pl2}\\& -1&\ldots & \ldots & \ldots \end{pmatrix}
\\
\big[1^{2n}.\big] &\begin{pmatrix} 1&\ldots&\widehat{\mn 2n\pl1}\\ &-1&\ldots \end{pmatrix}& 
\begin{pmatrix} 1&\ldots&\widehat{\mn 3n\pl2}\\ &-1&\ldots \end{pmatrix}\\
\end{array}$$
The induced symbols corresponding to the characters $\big[.2^{n\mn3}\big]_{B_6}$ and $\big[21^{2n\mn2}.\big]$
lie in the same family. In addition, if $\Lambda'$ is the symbol attached to $\big[1^{2n}.\big]$ then $e_{\mathbf{i}^*}f_\mathbf{i} \Lambda' = \Lambda'$. Finally, one can check that $\widetilde{f}_\mathbf{i} \Lambda = f_\mathbf{i} \Lambda$
using \S\ref{sssec:branchingrules}  and invoke Lemma~\ref{lem:zeroesfromiinduction} to conclude that $\beta_1 = 0$ (note that Remark~\ref{rmk:fivskashiwara} does not apply to the last step of the computation of $\widetilde{f}_\mathbf{i} \Lambda$). For the remainder of the proof we will write $\beta := \beta_3$. 
\smallskip

\noindent {\bf Step 2.} Assume now that $\Phi_{2n}(q)_\ell > 4n$. Then 
$\Psi_{[2^{n\mn1}.]_{B_2}}=\big[2^{n-1}.\big]_{B_2} + 2\big[.1^{2n}]$ by Theorem~\ref{thm:PIM1}.
In addition, it was observed in the proof of that theorem that the Deligne--Lusztig character associated to a Coxeter element $c$
decomposes as
$$R_c = U_1-U_2-U_3-U_4 + \Psi_{[2^{n\mn1}.]_{B_2}}.$$
This shows that apart from $ \Psi_{[2^{n\mn1}.]_{B_2}}$, none of the PIMs corresponding to
cuspidal modules appear in $R_c$. In order to decompose the next Deligne--Lusztig 
characters we shall use the following identities, whose proofs are identical 
to those for Lemma~\ref{lem:inducedsums}.

\begin{lemma}\label{lem:inducedsums2}
Let $L$ be a $1$-split Levi of $G$ of type $B_{2n-1}$. We have 
$$\begin{aligned}
U_5 &\, := bR_L^G\Big(\big[ 2n\mn1. \big]\Big) = \big[2n.\big] + \big[(2n\mn1)1.\big], \\
U_6 & \, := bR_L^G\Big( \big[1^{n\mn1}.n\mn2\big]_{B_2}+\big[1^{n\mn2}.n\mn1\big]_{B_2}\Big) = 
2\big[1^{n\mn1}.n\mn1\big]_{B_2} + \big[21^{n\mn2}.n\mn2\big]_{B_2} + \big[1^{n\mn2}.(n\mn1)1\big]_{B_2}, \\
U_7 & \, := bR_L^G\Big(\big[(2n\mn2)1. \big]\Big) = \big[(2n\mn1)1.\big] + \big[(2n\mn2)1^2.\big], \\
U_8 & \, := bR_L^G\Big(\sum_{j=1}^{n} (-1)^{j+n} \big[1^j.(n\pl1\mn j)1^{n\mn2}\big] \Big) = (-1)^{n+1} \big[1.(n\pl1)1^{n-2}.\big]  + \big[1^{n\pl1}.1^{n\mn1}\big],\\
U_9 & \, := bR_L^G\Big(\big[1^{n\mn3}.(n\mn1)1\big]_{B_2}+\big[1^{n\mn2}.(n\mn2)1\big]_{B_2}+\big[21^{n\mn2}.n\mn3\big]_{B_2}-2 \big[1^{n\mn2}.n\mn1\big]_{B_2}- \big[1^{n\mn1}.n\mn2\big]_{B_2}\Big), \\
 &\, \phantom{:} = \big[1^{n\mn3}.(n\mn1)2\big]_{B_2} + \big[21^{n\mn3}.(n\mn2)1\big]_{B_2} + \big[2^21^{n\mn3}.n\mn3\big]_{B_2} - 3\big[1^{n\mn1}.n\mn1\big]_{B_2}, \\
U_{10} &\, := bR_L^G\Big(\sum_{r=0}^{n-4} (-1)^{r+n} \big[n\mn r\mn4.2^r1^{n\mn r\mn3}\big]_{B_6} \Big) = (-1)^n\big[n\mn 3.1^{n\mn3}\big]_{B_6}  +\big[.2^{n\mn3}\big]_{B_6}.\\
\end{aligned}$$ 
\end{lemma}  

Since we want to use the result in \cite{Du13} to get information on the PIMs with cuspidal head,
it is enough to consider the Deligne--Lusztig characters $R_w$ for elements $w\in W_{2n}$ whose conjugacy 
class does not meet any proper parabolic subgroup. Such classes are called
cuspidal (or elliptic) and are described in \cite[Prop.~3.4.6]{GePf00}. With the notation in 
\cite[\S3.4.2]{GePf00}, minimal length elements of the first 3 cuspidal classes, ordered by length, are given
by $c =w_{(2n)}^-$, $v_n = s_1 s_2 c = w_{(1,2n-1)}^-$, and
$w_n := s_1 s_2 s_3 s_2 c = w_{(2,2n-2)}$. Let us first consider the Deligne--Lusztig character associated
to $v_n$.  
From the decomposition of $R_{v_n}$ given in Lemma~\ref{lem:vn} and the characters defined in Lemma~\ref{lem:inducedsums} and \ref{lem:inducedsums2}
we have
$$\begin{aligned}
bR_{v_n} &\, = 2(U_1-U_2-U_3-U_4)-U_5-D_G(U_5)+(-1)^nU_6 +2\big[2^{n-1}.\big]_{B_2} + 4\big[.1^{2n}] \\
 &\, = 2(U_1-U_2-U_3-U_4)-U_5-D_G(U_5)+(-1)^nU_6+ 2\Psi_{[2^{n\mn1}.]_{B_2}}.\\ 
\end{aligned}$$
Again, apart from $ \Psi_{[2^{n\mn1}.]_{B_2}}$, none of the PIMs corresponding to
cuspidal modules appear in $R_{v_n}$. We move on to the element $w_n := s_1 s_2 s_3 s_2 c$.
Using Lemma~\ref{lem:wn} and Lemma~\ref{lem:inducedsums} we can decompose $R_{w_n}$ as
$$\begin{aligned}
bR_{w_n}  =&\, 3U_1-3U_2 -U_3-4U_4-2U_5+U_7 +U_8+D_G(U_1)-2D_G(U_5)+D_G(U_7)\\
 &\, + (-1)^{n-1}U_9-U_{10}+\big[.2^{n-3}\big]_{B_6}+3\big[2^{n-1}.\big]_{B_2}+2\big[1^{2n}.\big]+3\big[.1^{2n}\big]\\[4pt]
  = &\,  3U_1-3U_2 -U_3-4U_4-2U_5+U_7 +U_8+D_G(U_1)-2D_G(U_5)+D_G(U_7)\\
 &\, + (-1)^{n-1}U_9-U_{10}+\Psi_{[.2^{n-3}]_{B_6}}+3\Psi_{[2^{n-1}.]_{B_2}}+(2-\beta)\Psi_{[1^{2n}.]}.
\end{aligned}$$
Here we have also used Proposition~\ref{prop:PIM0} and Theorem~\ref{thm:PIM1} which give the decomposition of $\Psi_{[2^{n-1}.]_{B_2}}$ and $\Psi_{[1^{2n}.]}$. It follows from \cite[Prop.~1.5]{Du13} that $2-\beta \geq 0$ hence $\beta \leq 2$.
\smallskip

\noindent {\bf Step 3.} The argument is entirely similar to that given in the proof of Theorem~\ref{thm:PIM1} with the exception that one uses the multiplicities
$$\big\langle R_w;\big[.2^{n-3}\big]_{B_6}\big\rangle_G=-2 \quad \text{and} \quad \big\langle R_w;\big[1^{2n}.\big]\big\rangle_G=1.$$
This shows that $\beta \geq 2$, hence $\beta =2$.
\end{proof}

\appendix
\renewcommand{\thesection}{A}

\section*{Appendix: Computation of Deligne--Lusztig characters}

We fix an integer $n$. As in \S\ref{sssec:frg} we denote by $W_{2n}$ the Weyl group of type $B_{2n}$ with generators $s_1,\ldots,s_{2n}$. We explain here how to compute the Deligne--Lusztig characters associated with the elements
$$\begin{aligned}
&& v_n &\, :=  (s_1 s_2) (s_1 s_2s_3 \cdots s_{2n-1}s_{2n}) &&\\
\text{and} \qquad\qquad && w_n &\, := (s_1 s_2 s_3 s_2) (s_1 s_2 s_3 \cdots s_{2n-1}s_{2n})&&  \qquad \qquad
\end{aligned}$$
of respective lengths $2n+2$ and $2n+4$. 

\subsection{The Deligne--Lusztig character associated to $v_n$}
Using the notation in \cite[\S3.4.2]{GePf00}, the element decomposes as $v_n = w_{(1,2n-1)}^- = s_1 b_{1,2n-1}^- = b_{0,1}^- b_{1,2n-1}^-$ and one can compute the values of irreducible characters of $W_{2n}$ at the element $v_n$ using the Murnaghan--Nakayama rule.
More precisely, if $\blambda$ is a bipartition of $2n$ and $\chi_{\blambda}$ is the corresponding irreducible character of $W_{2n}$ then by \cite[Thm.~10.3.1]{GePf00} we have
\begin{equation}\label{eq:MNrule}
 \chi_{\blambda}(v_n) = \sum_{\gamma} (-1)^{h(\gamma)} \chi_{\blambda \smallsetminus \gamma}(s_1)
 \end{equation}
where $\gamma$ runs over all the $(2n\mn1)$-hooks of $\blambda$, and $h(\gamma)$ equals the leg length of $\gamma$ plus $1$ if $\gamma$ is in the second component. From the description of the principal $\Phi_{2n}$-block
$B$ given in \S\ref{ssec:2nblock} one can determine the bipartitions $\blambda$ which have a $(2n\mn1)$-hook and whose
family has a non-trivial intersection with $\mathrm{Irr}\, B$. They are listed in Table~\ref{tab:bipartvn}, together with
the value of $h(\gamma)$, the $(2n\mn1)$-core $\blambda \smallsetminus \gamma$, the value of $\chi_{\blambda \smallsetminus \gamma}(s_1s_2)$ and that of $\chi_{\blambda}(w_n)$ using \eqref{eq:MNrule}. Note that the second
half of the table can be obtained from the first using the relation $\chi_{\blambda^*}(v_n) = (-1)^{l(v_n)} \chi_{\blambda}(v_n) = \chi_{\blambda}(v_n)$, where $l(w)$ is the Coxeter length of $w\in W_{2n}$.

\begin{table}
$$ \begin{array}{c|c|c|c|c}
\blambda & h(\gamma) & \blambda \smallsetminus \gamma & \chi_{\blambda \smallsetminus \gamma}(s_1) & \chi_{\blambda}(v_n) \\\hline
2n. & 0 & \multirow{5}{*}{$1. $} & \multirow{5}{*}{$1$}& 1 \\
n21^{n\mn2}. & n-1 & & & (-1)^{n-1}  \\
1^{2n}.   & 2n-2  & & & 1 \\ 
1.(n\pl1)1^{n\mn2} & n-2+1& & & (-1)^{n-1} \\ 
1.1^{2n\mn1} & 2n-2+1 & & & -1\\\hline
.1^{2n}  &2n-2+1 & \multirow{5}{*}{$.1 $} & \multirow{5}{*}{$-1$} & 1\\
.n21^{n\mn2} & n-1+1& & &  (-1)^{n-1}  \\
.2n & 0+1 & & & 1   \\ 
(n\mn1)1^n.1& n   & & & (-1)^{n-1}  \\
2n\mn1.1 & 0 & & & -1
 \end{array}$$
\caption{Bipartitions $\blambda$ with a $(2n\mn1)$-hook whose family intersects the principal $\Phi_{2n}$-block}
\label{tab:bipartvn}
\end{table}

\smallskip

The families corresponding to the bipartitions $2n.$ and $.1^{2n}$ contain only one unipotent character each,
the trivial and the Steinberg character respectively. The other families have 4 elements and are listed in Table~\ref{tab:families4vn}.
By convention the special character is the first one in each list. The corresponding almost characters can be computed
from \cite[\S4]{Lu84}. They are given in Table~\ref{tab:almostvn}.

\begin{table}
$$ \begin{array}{c|c}
\blambda & \mathfrak{F} \\\hline
\vphantom{\Big)} 
n21^{n\mn2}. &  \big[n.21^{n\mn2}\big], \big[n21^{n\mn2}. \big], \big[1.(n\pl1)1^{n\mn2}\big], \big[1^{n\mn2}.(n\mn1)1\big]_{B_2} \\[5pt]
1^{2n}.   &   \big[1 . 1^{2n\mn1}\big] ,\big[1^{2n} .\big], \big[. 21^{2n\mn2}\big], [.1^{2n\mn2}\big]_{B_2}  \\[5pt]
1.(n\pl1)1^{n\mn2} &    \big[n.21^{n\mn2}\big], \big[n21^{n\mn2}. \big], \big[1.(n\pl1)1^{n\mn2}\big], \big[1^{n\mn2}.(n\mn1)1\big]_{B_2}  \\[5pt]
1.1^{2n\mn1} & \big[1 . 1^{2n\mn1}\big] ,\big[1^{2n} .\big], \big[. 21^{2n\mn2}\big], [.1^{2n\mn2}\big]_{B_2}  \\[5pt]
.n21^{n\mn2} &  \big[(n\mn1)1.1^n\big], \big[.n21^{n\mn2}\big], \big[(n\mn1)1^n.1\big], \big[21^{n\mn2}.n\mn2\big]_{B_2} \\[5pt]
.2n & \big[2n\mn1. 1\big], \big[ . 2n\big], [(2n\mn1)1. \big], \big[2n\mn2. \big]_{B_2}     \\[5pt] 
(n\mn1)1^n.1&  \big[(n\mn1)1.1^n\big], \big[.n21^{n\mn2}\big], \big[(n\mn1)1^n.1\big], \big[21^{n\mn2}.n\mn2\big]_{B_2} \\[5pt]    
2n\mn1.1 &  \big[2n\mn1. 1\big], \big[ . 2n\big], [(2n\mn1)1. \big], \big[2n\mn2. \big]_{B_2} \\
 \end{array}$$
\caption{Families $\mathfrak{F}$  with $4$ elements occurring in $bR_{v_n}$}
\label{tab:families4vn}
\end{table}

\begin{table}
$$ \begin{array}{c|c}
\blambda &bR_{\chi_{\blambda}} \\\hline
\vphantom{\Big)} 
n21^{n\mn2}. &   \displaystyle  - \frac{1}{2} \Big( \big[1.(n\pl1)1^{n\mn2}\big]+ \big[1^{n\mn2}.(n\mn1)1\big]_{B_2}\Big) \\[5pt]
1^{2n}.   &    \displaystyle  \frac{1}{2} \Big(\big[1^{2n} .\big]- \big[. 21^{2n\mn2}\big]\Big)  \\[5pt]
1.(n\pl1)1^{n\mn2} &    \displaystyle  \frac{1}{2} \Big( \big[1.(n\pl1)1^{n\mn2}\big]- \big[1^{n\mn2}.(n\mn1)1\big]_{B_2}\Big) \\[5pt]
1.1^{2n\mn1} & \displaystyle  \frac{1}{2} \Big(\big[1^{2n} .\big]+ \big[. 21^{2n\mn2}\big]\Big)  \\[5pt]
.n21^{n\mn2} &   \displaystyle  -  \frac{1}{2} \Big(\big[(n\mn1)1^n.1\big]+\big[21^{n\mn2}.n\mn2\big]_{B_2}\Big) \\[5pt]
.2n &  \displaystyle  \frac{1}{2} \Big( \big[ . 2n\big]- \big[(2n\mn1)1. \big]\Big)     \\[5pt] 
(n\mn1)1^n.1&   \displaystyle  \frac{1}{2} \Big(\big[(n\mn1)1^n.1\big]-\big[21^{n\mn2}.n\mn2\big]_{B_2}\Big) \\[5pt]
2n\mn1.1 & \displaystyle  \frac{1}{2} \Big( \big[(2n\mn1)1. \big]- \big[ . 2n\big]\Big)  \\
 \end{array}$$
\caption{Some almost characters occurring in $bR_{v_n}$}
\label{tab:almostvn}
\end{table}

Using the data in these tables, together with the formula $R_{w} = \sum_{\blambda} \chi_{\blambda}(w) R_{\chi_{\blambda}} $,
we obtain the explicit decomposition of $R_{v_n}$ on the block.

\begin{lemma}\label{lem:vn}
The decomposition of the Deligne--Lusztig character $R_{v_n}$ of $G_{2n}$ associated to 
$v_n = s_1 s_2 s_1 s_2 \cdots s_{2n}$ on the principal $\Phi_{2n}$-block is given by
$$
bR_{v_n} =  \big[2n.\big] + \big[.1^{2n}\big] - \big[(2n\mn1)1.\big]-\big[.21^{2n\mn2}\big]+
(-1)^n\Big( \big[1^{n\mn2}.(n\mn1)1\big]_{B_2}+ \big[21^{n\mn2}.n\mn2\big]_{B_2} \Big).
$$
\end{lemma}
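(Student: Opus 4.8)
The plan is to expand $R_{v_n}$ in the basis of almost characters and project onto the principal block $B$. Concretely, I would start from the inversion formula $R_{v_n}=\sum_{\blambda}\chi_{\blambda}(v_n)\,R_{\chi_{\blambda}}$, the sum running over bipartitions $\blambda$ of $2n$, and then exploit two independent vanishing conditions. First, $bR_{\chi_{\blambda}}=0$ unless the Lusztig family of $\chi_{\blambda}$ has non-empty intersection with $\Irr_{\bbK}B$; by the explicit description of $B$ in \S\ref{ssec:2nblock} this restricts $\blambda$ to a short list of families. Second, since $v_n=w_{(1,2n-1)}^-=b_{0,1}^-b_{1,2n-1}^-$ has signed cycle type $(1^-,(2n\mn1)^-)$, the value $\chi_{\blambda}(v_n)$ vanishes unless $\blambda$ admits a $(2n\mn1)$-hook whose removal leaves a bipartition of size $1$. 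Intersecting the two conditions leaves exactly the bipartitions appearing in Table~\ref{tab:bipartvn}, and I would use the symmetry $\chi_{\blambda^*}(v_n)=(-1)^{l(v_n)}\chi_{\blambda}(v_n)=\chi_{\blambda}(v_n)$ (valid because $l(v_n)=2n+2$ is even) to halve the casework.

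The next step is to fill in the columns of Table~\ref{tab:bipartvn}: for each relevant $\blambda$ I would locate the unique $(2n\mn1)$-hook $\gamma$, read off its leg length to compute $h(\gamma)$, identify the $(2n\mn1)$-core $\blambda\smallsetminus\gamma\in\{1.,\,.1\}$ together with the sign $\chi_{\blambda\smallsetminus\gamma}(s_1)\in\{\pm1\}$ (evaluation of the residual character at the negative $1$-cycle $b_{0,1}^-=s_1$), and then apply the Murnaghan--Nakayama rule \eqref{eq:MNrule} to get $\chi_{\blambda}(v_n)$. In parallel I would record, for each such $\blambda$, its family $\mathfrak F$ (Table~\ref{tab:families4vn}): the families of $2n.$ and $.1^{2n}$ are singletons, containing only the trivial and Steinberg characters respectively, while the remaining families have four elements and associated group $\bbZ/2$. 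From the corresponding $4\times4$ Fourier transform matrix I would compute each almost character $R_{\chi_{\blambda}}$ and retain only the constituents lying in $B$; this produces Table~\ref{tab:almostvn}.

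Finally I would substitute into $bR_{v_n}=\sum_{\blambda}\chi_{\blambda}(v_n)\,bR_{\chi_{\blambda}}$ and collect coefficients. Within each size-$4$ family the two $W$-characters entering the sum have values that differ by the sign occurring in the relevant row of the Fourier matrix, so most constituents cancel in pairs and what survives is precisely the combination displayed in the statement. The main obstacle here is bookkeeping rather than conceptual: assembling the complete list of contributing $\blambda$ from \S\ref{ssec:2nblock} without omissions, correctly tracking the leg-length and ``second-component'' signs in \eqref{eq:MNrule}, and pinning down the $\pm\tfrac12$ entries of the Fourier matrices — including being careful, in low rank, about characters that lie outside $B$ for generic $n$ but happen to lie inside it for small $n$. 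Everything else is a finite, mechanical verification.
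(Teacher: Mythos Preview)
Your proposal is correct and follows essentially the same approach as the paper: expand $R_{v_n}$ as $\sum_{\blambda}\chi_{\blambda}(v_n)R_{\chi_{\blambda}}$, use the Murnaghan--Nakayama rule \eqref{eq:MNrule} together with the duality $\chi_{\blambda^*}(v_n)=\chi_{\blambda}(v_n)$ to populate Table~\ref{tab:bipartvn}, compute the block-truncated almost characters via Lusztig's Fourier matrices to obtain Tables~\ref{tab:families4vn} and~\ref{tab:almostvn}, and then collect terms. The paper proceeds in exactly this way, with the same tables and the same cancellations.
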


\subsection{The Deligne--Lusztig character associated to $w_n$}
Using again the notation in \cite[\S3.4.2]{GePf00}, we can write $w_n$ as $w_n = w_{(2,2n-2)}^- = 
 b_{0,2}^- b_{2,2n-2}^- = s_1 s_2 b_{2,2n-2}^-$.
Let $\blambda$ be a bipartition of $2n$. By \cite[Thm. 10.3.1]{GePf00} we have
\begin{equation}\label{eq:MNrule2}
 \chi_{\blambda}(w_n) = \sum_{\gamma} (-1)^{h(\gamma)} \chi_{\blambda \smallsetminus \gamma}(s_1s_2)
 \end{equation}
where $\gamma$ runs over all the $(2n\mn2)$-hooks of $\blambda$, and $h(\gamma)$ equals the leg length of $\gamma$ plus $1$ if $\gamma$ is in the second component.  We list in Table~\ref{tab:bipart} those bipartitions $\blambda$ which have a $(2n\mn2)$-hook $\gamma$, the value of $h(\gamma)$, the $(2n\mn2)$-core $\blambda \smallsetminus \gamma$, the value of $\chi_{\blambda \smallsetminus \gamma}(s_1s_2)$ and that of $\chi_{\blambda}(w_n)$ using \eqref{eq:MNrule2}. We do not list the ones with $(2n\mn2)$-core $1. 1$ since the corresponding character of $B_2$ vanishes on the element $s_1 s_2$. In addition, since $\chi_{\blambda^*}(w_n) = (-1)^{\ell(w_n)} \chi_{\blambda}(w_n) = \chi_{\blambda}(w_n)$ it is enough to deal with the cores $2.$ and $1^2.$.
\begin{table}
$$ \begin{array}{c|c|c|c|c}
\blambda & h(\gamma) & \blambda \smallsetminus \gamma & \chi_{\blambda \smallsetminus \gamma}(s_1s_2) & \chi_{\blambda}(w_n) \\\hline
2n. & 0 & \multirow{4}{*}{$2.$} & \multirow{4}{*}{$1$}& 1 \\
i31^{2n\mn i\mn3}. & 2n-i-2 & &&  (-1)^{i}  \\
21^{2n\mn2} .  &  2n-3 & & & -1\\ 
2 . j1^{2n\mn 2\mn j} & 2n-2 -j +1 & & & (-1)^{j-1} \\\hline 
1^{2n} . &2n-3 & \multirow{4}{*}{$1^2.$} & \multirow{4}{*}{$-1$} & 1\\
k2^21^{2n\mn k\mn4}. & 2n-i-2 & & &  (-1)^{i-1}  \\
(2n\mn 1)1. & 0& & & -1\\ 
1^2 . l1^{2n\mn 2\mn l} & 2n-2 -j +1 & & & (-1)^{j} \\  
 \end{array}$$
\caption{Bipartitions $\blambda$ with $(2n\mn2)$-core equal to $2.$ or $1^2. $}
\label{tab:bipart}
\end{table}

\smallskip

For each bipartition $\blambda$ in Table~\ref{tab:bipart} one can easily compute the family associated to each of these bipartitions and check using \S\ref{ssec:2nblock} that only those with 
$i=l = n$, $j=1$ or $n+1$ and $k=n-1$ correspond to families which have a non-trivial intersection with the principal
$\Phi_{2n}$-block $B$. The family associated to the bipartition $2n.$ has size one and contains only the trivial character. We give in Table \ref{tab:families4} the families with $4$ elements, starting with the special character in the family.

\begin{table}
$$ \begin{array}{c|c}
\blambda & \mathfrak{F} \\\hline
\vphantom{\Big)}n31^{n\mn3}. &\big[n.31^{n\mn3}\big], \big[ n31^{n\mn3}. \big], \big[2 . (n\pl1)1^{n\mn3}\big], \big[1^{n\mn3}.(n\mn1)2\big]_{B_2} \\[5pt]
21^{2n\mn2} .  & \big[2. 1^{2n\mn2} \big], [. 3 1^{2n\mn3}\big],\big[21^{2n\mn2}.\big],\big[1^{2n\mn3}.1\big]_{B_2} \\[5pt] 
2 .(n\pl1)1^{n\mn 3}  &\big[n.31^{n\mn3}\big], \big[ n31^{n\mn3}. \big], \big[2 . (n\pl1)1^{n\mn3}\big], \big[1^{n\mn3}.(n\mn1)2\big]_{B_2}\\[5pt] 
2 . 1^{2n\mn2} &   \big[2. 1^{2n\mn2} \big], [. 3 1^{2n\mn3}\big],\big[21^{2n\mn2}.\big],\big[1^{2n\mn3}.1\big]_{B_2} \\[5pt]
1^{2n} . &\big[1 . 1^{2n\mn1}\big] ,\big[1^{2n} .\big], \big[. 21^{2n\mn2}\big], [1^{2n\mn2}.\big]_{B_2} \\[5pt]
(2n\mn1)1. & \big[2n\mn1. 1\big], \big[ . 2n\big], [(2n\mn1)1. \big], \big[.2n\mn2 \big]_{B_2} \\ 
 \end{array}$$
\caption{Families $\mathfrak{F}$ with $4$ elements occurring in $bR_{w_n}$ (up to Alvis-Curtis duality)}
\label{tab:families4}
\end{table}

\smallskip

The two remaining bipartitions $(n\mn1)2^21^{n\mn3}.$ and $1^2 . n1^{n\mn2}$ correspond to unipotent characters lying in a family with $16$ elements.  Using \cite[\S4]{Lu84} one can deduce from Table~\ref{tab:families4} the almost character corresponding to each bipartition. These are listed in Table~\ref{tab:almostchar}. For the family with $16$ elements we get
$$\begin{aligned} 
bR_{\chi_{(n\mn1)2^21^{n\mn3}.}} & = -\frac{1}{4} \Big(\big[1^2.n1^{n\mn2}\big] + \big[(n\mn1)1^{n\mn1}.2\big] + \big[21^{n\mn3}.(n\mn2)1\big]_{B_2} + \big[n\mn3.1^{n\mn3}\big]_{B_6} \Big),\\ 
bR_{\chi_{1^2 . n1^{n\mn2}}} & = \frac{1}{4} \Big(\big[1^2.n1^{n\mn2}\big] + \big[(n\mn1)1^{n\mn1}.2\big] - \big[21^{n\mn3}.(n\mn2)1\big]_{B_2} - \big[n\mn3.1^{n\mn3}\big]_{B_6} \Big).\\ \end{aligned} $$ 

\begin{table}
$$ \begin{array}{c|c}
\blambda & bR_{\chi_{\blambda}} \\\hline
\vphantom{\Bigg)}n31^{n\mn3}. & \displaystyle - \frac{1}{2} \Big( \big[2 . (n\pl1)1^{n\mn3}\big] + \big[1^{n\mn3}.(n\mn1)2\big]_{B_2}\Big)  \\[5pt]
21^{2n\mn2} .  &\displaystyle \frac{1}{2} \Big( \big[21^{2n\mn2}.\big]- \big[. 3 1^{2n\mn3}\big]\Big) \\[5pt]
2 .(n\pl1)1^{n\mn 3} &\displaystyle\frac{1}{2} \Big( \big[2 . (n\pl1)1^{n\mn3}\big] - \big[1^{n\mn3}.(n\mn1)2\big]_{B_2}\Big)\\[5pt] 
2 . 1^{2n\mn2} & \displaystyle \frac{1}{2} \Big( \big[21^{2n\mn2}.\big] +  \big[. 3 1^{2n\mn3}\big]\Big)  \\[5pt] 
1^{2n} . &\displaystyle \frac{1}{2} \Big(\big[1^{2n}.\big] -  \big[. 21^{2n\mn2}\big]\Big)\\[5pt]
(2n\mn1)1. & \displaystyle \frac{1}{2} \Big( \big[(2n\mn1)1.\big]-\big[.2n\big] \Big) \\[5pt] 
 \end{array}$$
\caption{Some almost characters occurring in $bR_{w_n}$}
\label{tab:almostchar}
\end{table}

Putting this all together we obtain the decomposition of $R_{w_n}$ on the block.

\begin{lemma}\label{lem:wn}
The decomposition of the Deligne--Lusztig character $R_{w_n}$ of $G_{2n}$ associated to \\
$w_n = s_1 s_2 s_3 s_2 s_1 s_2 \cdots s_{2n}$ on the principal $\Phi_{2n}$-block is given by
$$\begin{aligned}
bR_{w_n} = & \,  \big[2n. \big] - \big[(2n\mn1)1.\big] +  \big[(2n\mn2)1^2.\big]  +\big[1^{2n}.\big] 
 + \big[.1^{2n}\big] - \big[.21^{2n\mn2}\big] +\big[.31^{2n\mn3}\big]+ \big[.2n\big] \\[4pt]
& \, + (-1)^{n-1} \Big(\big[1^{n\mn3}. (n\mn1)2\big]_{B_2} + \big[ 2^21^{n\mn3}.n\mn3\big]_{B_2}
+\big[21^{n\mn3}.(n\mn2)1\big]_{B_2} + \big[n\mn3.1^{n\mn3}\big]_{B_6} \Big).
\end{aligned}$$ 
\end{lemma}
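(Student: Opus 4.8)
The plan is to follow the route already used to establish Lemma~\ref{lem:vn}, now with the Murnaghan--Nakayama formula \eqref{eq:MNrule2} in place of \eqref{eq:MNrule}. First I would enumerate all bipartitions $\blambda$ of $2n$ admitting a $(2n\mn2)$-hook $\gamma$. Removing $\gamma$ leaves a bipartition of size $2$, so the possible $(2n\mn2)$-cores are $2.$, $1^2.$, $1.1$, $.2$ and $.1^2$. The irreducible character of $W_2$ attached to the core $1.1$ vanishes at $s_1 s_2$, so those $\blambda$ contribute nothing; and the relation $\chi_{\blambda^*}(w_n)=(-1)^{\ell(w_n)}\chi_\blambda(w_n)=\chi_\blambda(w_n)$ reduces the bookkeeping to the cores $2.$ and $1^2.$. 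For each surviving $\blambda$ one records $h(\gamma)$, the core $\blambda\smallsetminus\gamma$, the value $\chi_{\blambda\smallsetminus\gamma}(s_1s_2)\in\{\pm1\}$ (it is $1$ for the core $2.$ and $-1$ for $1^2.$), and hence $\chi_\blambda(w_n)$ via \eqref{eq:MNrule2}; this is the content of Table~\ref{tab:bipart}. Comparing Lusztig families with the description of the block in \S\ref{ssec:2nblock} then isolates the finitely many $\blambda$ whose family meets the principal $\Phi_{2n}$-block, namely the values $i=l=n$, $j\in\{1,n\pl1\}$, $k=n\mn1$ together with the two ``large family'' bipartitions.

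The second step is to compute the almost characters $bR_{\chi_\blambda}$ for precisely these $\blambda$. The family of $2n.$ is a singleton consisting of the trivial character. Six further families of size $4$ occur (Table~\ref{tab:families4}); for each of these the relevant block of Lusztig's nonabelian Fourier transform \cite[Thm.~4.23]{Lu84} is the $\mathfrak{S}_2$-matrix, which yields the half-sums of pairs of block constituents listed in Table~\ref{tab:almostchar}. The two remaining bipartitions $(n\mn1)2^21^{n\mn3}.$ and $1^2.n1^{n\mn2}$ lie in a single family of size $16$ (an $\mathfrak{S}_3$-family); here one must extract the relevant rows of the corresponding $16\times16$ Fourier matrix, producing the two displayed formulas for $bR_{\chi_{(n\mn1)2^21^{n\mn3}.}}$ and $bR_{\chi_{1^2.n1^{n\mn2}}}$ with coefficients $\tfrac14$. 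This is the step where the $B_6$-series character $\big[n\mn3.1^{n\mn3}\big]_{B_6}$ first enters.

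Finally I would assemble $bR_{w_n}=\sum_\blambda \chi_\blambda(w_n)\,bR_{\chi_\blambda}$, the sum running only over the bipartitions identified above. Combining the values $\chi_\blambda(w_n)\in\{\pm1\}$ from Table~\ref{tab:bipart} with the half- and quarter-integer coefficients of the almost characters, all denominators cancel and one is left with the integral combination asserted in the lemma; the uniform sign $(-1)^{n-1}$ on the four $B_2$- and $B_6$-series constituents traces back to the parity of the leg lengths $h(\gamma)$ appearing in \eqref{eq:MNrule2}.

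I expect the \textbf{main obstacle} to be the $16$-element $\mathfrak{S}_3$-family: correctly pinning down the entries and signs of its Fourier matrix so that the $\tfrac14$'s cancel exactly against the $\pm1$ contributions is the delicate bookkeeping. A secondary point requiring care is completeness of the enumeration of $(2n\mn2)$-hook-admissible bipartitions whose family meets the block — an omission there would silently drop a constituent from the final formula — so I would double-check that list against \S\ref{ssec:2nblock} before assembling.
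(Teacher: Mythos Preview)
Your proposal is correct and follows essentially the same route as the paper: Murnaghan--Nakayama with $(2n\mn2)$-hooks, reduction to cores $2.$ and $1^2.$ via the vanishing at $1.1$ and duality, selection of the relevant families against \S\ref{ssec:2nblock}, Lusztig's Fourier transform for the size-$4$ and size-$16$ families, and final assembly. One small correction: the $16$-element family in type $B/C$ is attached to the elementary abelian group $(\mathbb{Z}/2)^2$, not to $\mathfrak{S}_3$ (the $\mathfrak{S}_3$-family has $8$ elements); your stated $\tfrac14$ coefficients are already the right ones for $(\mathbb{Z}/2)^2$.
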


\bibliographystyle{amsalpha}

\end{document}